\newtheorem{theorem}{Theorem}
\newtheorem{corollary}{Corollary}
\newtheorem{lemma}{Lemma}
\newtheorem{proposition}{Proposition}
\newcommand{\RR}{{\mathbb R}}
\begin{document}

\title{Immersed self-shrinkers}

\author{Gregory Drugan}
\address{Department of Mathematics, University of Washington, Seattle, WA 98195}
\email{drugan@math.washington.edu}
\thanks{G. Drugan was partially supported by NSF RTG 0838212.}

\author{Stephen J. Kleene}
\address{Department of Mathematics, MIT, Cambridge, MA 02139}
\email{skleene@math.mit.edu}
\thanks{S.J. Kleene was partially supported by NSF DMS 1004646.}

\subjclass[2010]{Primary 53C44, 53C42}
\keywords{Mean curvature flow, self-shrinker}


\begin{abstract}
We construct infinitely many complete, immersed self-shrinkers with rotational symmetry for each of the following topological types: the sphere, the plane, the cylinder, and the torus.
\end{abstract}

\maketitle


\section{Introduction}

In this paper, we construct infinitely many complete, immersed self-shrinker spheres, planes, cylinders, and tori in $\mathbb{R}^{n+1}$, $n \geq 2$. A self-shrinker is an immersion $F$ from an $n$-dimensional manifold $M$ into $\RR^{n+1}$ that satisfies
\begin{equation}
\label{ss}
\Delta_g F = - \frac{1}{2}F^{\perp},
\end{equation}
where $g$ is the metric on $M$ induced by the immersion, $\Delta_g$ is the Laplace-Beltrami operator, and $F^{\perp}(p)$ is the projection of $F(p)$ into the normal space $N_p M$. The mean curvature of $F(M)$ is given by $\Delta_g F$, and when $F$ is a self-shrinker, the family of submanifolds $$M_t = \sqrt{-t}F(M)$$ is a solution to the mean curvature flow for $t \in (-\infty, 0 )$. It is a consequence of Huisken's monotonicity formula~\cite{Hu} that a solution to the mean curvature flow behaves asymptotically like a self-shrinker at a type I singularity. In addition, self-shrinkers are minimal surfaces for the conformal metric $e^{-|x|^2/(2n)}(dx_1^2 + \dots + dx_{n+1}^2)$ on $\mathbb{R}^{n+1}$.

Examples of self-shrinkers in $\RR^{n+1}$ include the sphere of radius $\sqrt{2n}$ centered at the origin, the plane through the origin, the cylinder with an axis through the origin and radius $\sqrt{2(n-1)}$, and an embedded torus $(S^1 \times S^{n-1})$ constructed by Angenent~\cite{A}. In this paper, we construct an infinite number of complete, immersed self-shrinkers.
\begin{theorem}
\label{thm:main}
There are infinitely many complete, immersed self-shrinkers in $\mathbb{R}^{n+1}$, $n \geq 2$, for each of the following topological types: the sphere $(S^n)$, the plane $(\mathbb{R}^n)$, the cylinder $(\mathbb{R} \times S^{n-1})$, and the torus $(S^1 \times S^{n-1})$.
\end{theorem}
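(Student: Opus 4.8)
The plan is to reduce the self‑shrinker equation \eqref{ss} to an ODE for the generating curve of a rotationally symmetric hypersurface, and then to produce all four topological types by a single shooting scheme in which an oscillation (winding) phenomenon is the engine generating infinitely many examples. I would write a rotationally symmetric immersion as $F(s,\omega)=(x(s),\,r(s)\,\omega)$ with $\omega\in S^{n-1}\subset\mathbb{R}^{n}$ and $r>0$, parametrizing the profile curve $\gamma(s)=(x(s),r(s))$ by Euclidean arclength and setting $x'=\cos\theta$, $r'=\sin\theta$. Substituting into \eqref{ss} and computing the mean curvature of a surface of revolution collapses the PDE to the planar system
\[
x'=\cos\theta,\qquad r'=\sin\theta,\qquad \theta'=\tfrac12\bigl(x\sin\theta-r\cos\theta\bigr)+(n-1)\frac{\cos\theta}{r}.
\]
Equivalently, $\gamma$ is a geodesic of the conformal metric $\tilde g=r^{2(n-1)}e^{-(x^2+r^2)/2}\,(dx^2+dr^2)$ on the half-plane $\{r>0\}$, that is, a critical point of the weighted length $\int r^{\,n-1}e^{-(x^2+r^2)/4}\,ds$. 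A direct check recovers the known solutions: the horizontal line $r\equiv\sqrt{2(n-1)}$ is the cylinder, and the circular arc of radius $\sqrt{2n}$ is the sphere. It then suffices to produce profile curves of four kinds and, in each class, infinitely many geometrically distinct ones: (i) curves meeting the axis $\{r=0\}$ orthogonally at two points (spheres), (ii) orthogonally at one point with the other end running to infinity (planes), (iii) closed curves contained in $\{r>0\}$ (tori), and (iv) complete noncompact curves in $\{r>0\}$ (cylinders).

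The shooting scheme exploits the reflection $x\mapsto-x$, under which the system is invariant. A half-curve that crosses $\{x=0\}$ with horizontal tangent ($\theta=0$) and elsewhere meets $\{r=0\}$ orthogonally ($\theta=\pm\pi/2$) extends by reflection to a smoothly closed generating curve. I would therefore launch, for each height $a>0$, the geodesic $\gamma_a$ from the point $(0,a)$ with $\theta(0)=0$, and record the outcome at the first return to $\{x=0\}$ or first arrival at $\{r=0\}$. An orthogonal arrival at $\{r=0\}$, followed by reflection, yields a \emph{sphere}; a horizontal return to $\{x=0\}$ at a point $(0,b)$ with $b\neq a$, followed by reflection, yields a \emph{torus}. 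For the two noncompact types I would instead prescribe the asymptotics: the Gaussian weight forces admissible ends to approach rays, and one selects profiles that meet the axis once and then escape (planes) or that remain in $\{r>0\}$ and run to infinity at both ends (cylinders).

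The heart of the argument is the local behavior of $\gamma_a$ near the equilibrium configurations. Linearizing the system about the cylinder $r=\sqrt{2(n-1)}$, $\theta=0$ (and, for the closed case, about the round sphere arc) exhibits oscillatory, center-type motion in $r$: a geodesic entering a fixed annular region near the equilibrium winds about it, and the generating curve accumulates angular rotation before it can leave the region. I would make this quantitative by tracking $\theta$ as a function of $s$ and bounding the rotation number from below, so that the number $N(a)$ of half-oscillations before $\gamma_a$ first returns to $\{x=0\}$ (or reaches the axis) is unbounded along a suitable sequence of heights $a$. A continuity and intermediate-value argument in the shooting parameter $a$ then produces, for every sufficiently large integer $k$, a geodesic satisfying the required orthogonality condition after exactly $k$ windings; since distinct winding numbers force distinct self-intersection counts, these give noncongruent immersions, hence infinitely many examples of each type.

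The hard part will be twofold. First is the degeneracy of $\tilde g$ at the axis $\{r=0\}$, where the weight $r^{\,n-1}$ vanishes, and at infinity: I must show that geodesics approaching the axis do so orthogonally, so that the rotated surface closes up smoothly at the poles, and that the noncompact ends are complete and properly immersed with the correct asymptotic behavior. Second, and more delicate, is making the winding mechanism genuinely quantitative — establishing uniform oscillation and return estimates that guarantee the rotation number is unbounded and that the shooting map depends continuously on $a$ and hits every large winding number. This last point is precisely what upgrades the single known closed example (Angenent's torus \cite{A}) to an infinite family, and it is where the main effort in the analysis of the ODE will be concentrated.
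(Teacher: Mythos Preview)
Your reduction to the profile ODE, the use of the reflection $x\mapsto -x$, and the shooting/intermediate-value scheme all match the paper's setup, and you correctly locate the hard step in making the ``winding'' unbounded. The gap is the mechanism you propose for it. Linearizing about the cylinder $r=\sqrt{2(n-1)}$, $\theta=0$ does \emph{not} give a center: writing $u=\sqrt{2(n-1)}+v$ in the graph equation yields the Hermite-type equation $v''\approx \tfrac{x}{2}v'-v$, and the explicit $x$-dependence destroys any autonomous phase-plane picture, so a rotation-number bound cannot be read off the linearization. In fact the paper proves the opposite of sustained oscillation about the cylinder: each maximal graphical piece $(x,u(x))$ of a non-degenerate geodesic has $u''$ vanishing at no more than two points (Proposition~\ref{graphical_geodesic_degree_bound}).

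What is actually counted is the number $k$ of maximal graphical segments $\Lambda[k](\Gamma)$ between successive vertical tangencies, and the engine is not oscillation but a structural analysis of how such segments degenerate. The paper classifies segments into finitely many concavity ``types'' $(0,\pm),(1,\pm),(2,+)$, proves that the half-entire graphs (those exiting through the axis or through infinity) are exactly the inner/outer quarter-spheres and the trumpets of \cite{KM}, and shows via the Gauss-Bonnet formula for the Angenent metric that the type strata have boundaries consisting precisely of these half-entire graphs (Propositions~\ref{double_convergence_to_boundary} and~\ref{boundaries_by_type}). The ``unbounded winding'' is then the statement that near the plane, the cylinder, or Angenent's torus, the first $N$ segments $\Lambda[0],\dots,\Lambda[N]$ exist with prescribed types for $t$ close enough to the limiting value (Propositions~\ref{asymptotic_near_plane}--\ref{asymptotic_near_torus}); since at the other end of the shooting interval $\Lambda[N]$ has a different type, it must pass through a half-entire graph for some intermediate $t$, producing the self-shrinker. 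Your plan would need to replace the linearization step by this type classification, the Gauss-Bonnet degeneration control, and the quarter-sphere/trumpet dichotomy for half-entire graphs (the latter already relying on Huisken's theorem and the results of \cite{KM}); none of these are visible from the linearized equation.
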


Numerical evidence for the existence of an immersed sphere self-shrinker was provided by Angenent~\cite{A} in 1989. In 1994, Chopp~\cite{Ch} described an algorithm for constructing surfaces that are approximately self-shrinkers and provided numerical evidence for the existence of a number of self-shrinkers, including compact, embedded self-shrinkers of genus 5 and 7. Recently, Kapouleas, the second author, and M{\o}ller~\cite{KKM} and Nguyen~\cite{N1}--\cite{N3} used desingularization constructions to produce examples of complete, non-compact, embedded self-shrinkers with high genus in $\RR^3$. M{\o}ller~\cite{M} also used desingularization techniques to construct compact, embedded, high genus self-shrinkers in $\RR^3$. In~\cite{D}, the first author constructed an immersed sphere self-shrinker.

In contrast to these constructions are several rigidity theorems for self-shrinkers. Huisken~\cite{Hu} showed that the sphere of radius $\sqrt{2n}$ is the only compact, mean-convex self-shrinker in $\RR^{n+1}$, $n\geq 2$.  In their study of generic singularities of the mean curvature flow, Colding and Minicozzi~\cite{CM} showed that the only $F$-stable\footnote{Self-shrinkers are unstable as minimal surfaces for the conformal metric $e^{-|x|^2/(2n)}(dx_1^2 + \dots + dx_{n+1}^2)$ on $\mathbb{R}^{n+1}$, which can be seen by translating a self-shrinker in space (or time). To account for these translations when considering the stability of self-shrinkers, Colding and Minicozzi introduced the notion of $F$-stability (see~\cite{CM}, p.763).} self-shrinkers with polynomial volume growth in $\RR^{n+1}$, $n \geq 2$, are the sphere of radius $\sqrt{2n}$ and the plane. Ecker and Huisken~\cite{EH} showed that an entire self-shrinker graph with polynomial volume growth must be a plane in their study of the mean curvature flow of entire graphs. Afterwards, Lu Wang~\cite{W} showed that an entire self-shrinker graph has polynomial volume growth. In their classification of complete, embedded self-shrinkers with rotational symmetry, the second author and M{\o}ller~\cite{KM} showed that the sphere of radius $\sqrt{2n}$, the plane, and the cylinder of radius $\sqrt{2(n - 1)}$ are the only embedded, rotationally symmetric self-shrinkers of their respective topological type.

The self-shrinkers we construct in this paper have rotational symmetry, and they correspond to geodesics for a conformal metric on the upper-half plane: geodesics whose ends either intersect the axis of rotation perpendicularly or exit through infinity, and closed geodesics with no ends (see Figure~\ref{fig:sphere:intro} and Appendix C). The heuristic idea of the construction is to first study the behavior of geodesics near two known self-shrinkers and then use continuity arguments to find self-shrinkers between them. In order to implement this heuristic, we first give a detailed description of the basic shape and limiting properties of the geodesics: We prove that the Euclidean curvature of a non-degenerate geodesic segment, written as a graph over the axis of rotation, can vanish at no more than two points, and we also show the different ways in which a family of geodesic segments can converge to a geodesic that exits the upper-half plane. Then, after establishing the asymptotic behavior of geodesics near the plane, the cylinder, and Angenent's torus, we use induction arguments to construct infinitely many self-shrinkers near each of these self-shrinkers. A new feature of the construction is the use of the Gauss-Bonnet formula to control the shapes of geodesics that almost exit the upper-half plane. 

We note that in the one-dimensional case, the self-shrinking solutions to the curve shortening flow have been completely classified (see Gage and Hamilton~\cite{GH}, Grayson~\cite{Gr}, Abresch and Langer~\cite{AL}, Epstein and Weinstein~\cite{EW}, and Halldorsson~\cite{Hal}). One difficulty in higher dimensions ($n \geq 2$) is the presence of the $(n-1)/r$ term in the geodesic equation~(\ref{SSEq}), which allows the Euclidean curvature of a geodesic to change sign and forces a geodesic intersecting the axis of rotation $\{r=0\}$ to do so perpendicularly.

We also note that the existence of immersed $S^2$ self-shrinkers shows that the uniqueness results for constant mean curvature spheres in $\RR^3$ (see Hopf~\cite{Ho}) and for minimal spheres in $S^3$ (see Almgren~\cite{Alm}) do not hold for self-shrinkers. In addition, Alexandrov's moving plane method does not seem to have a direct application to the self-shrinker equation. (Recall that Angenent's construction of an embedded $S^1 \times S^{n-1}$ self-shrinker shows that there are compact, embedded self-shrinkers different from the sphere.) It is unknown whether or not the sphere of radius $\sqrt{2n}$ is the only embedded $S^n$ self-shrinker; however, as mentioned above, this is the only embedded $S^n$ self-shrinker with rotational symmetry.

\begin{figure}
\label{fig:sphere:intro}
\begin{center}
\includegraphics[width=.85 \textwidth]{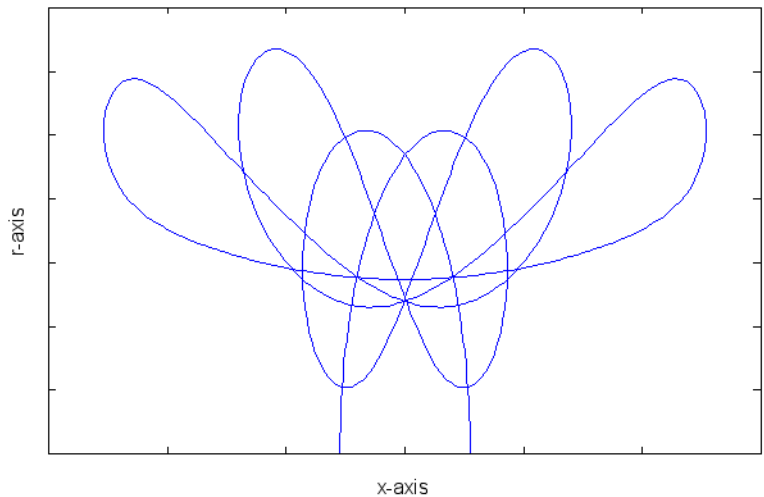}
\end{center}
\caption{A geodesic whose rotation about the $x$-axis is an immersed sphere self-shrinker.}
\end{figure}


\section{Preliminaries}
\label{prelim}

The self-shrinkers we construct have rotational symmetry about a line through the origin in $\mathbb{R}^{n+1}$, $n \geq 2$, and can be described by a curve in the upper half of the $(x,r)$-plane. An arclength parametrized curve $\Gamma(s) = (x(s), r(s))$ is the profile curve of a self-shrinker if and only if the angle $\alpha(s)$ solves
\begin{equation} 
\label{SSEq}
\dot{\alpha}(s) = \frac{x(s)}{2} \sin \alpha(s) + \left( \frac{n - 1}{r(s)} - \frac{r(s)}{2} \right) \cos \alpha(s),
\end{equation}
where $\dot{x}(s) = \cos \alpha(s)$ and $\dot{r}(s) = \sin \alpha(s)$. Equation~(\ref{SSEq}) is the geodesic equation for the conformal metric $g_{Ang}=r^{2(n-1)}e^{-(x^2+r^2)/2}(dx^2 + dr^2)$ on $\mathbb{H} = \{ (x,r) : x \in \mathbb{R}, \, r >0 \}$ (see~\cite{A}, pp.7-9). For $(x_0, r_0) \in \mathbb{H}$ and $\alpha_0 \in \mathbb{R}$, we let $\Gamma[x_0, r_0, \alpha_0]$ denote the unique solution to~(\ref{SSEq}) satisfying
\[
\Gamma[x_0, r_0, \alpha_0] (0) = (x_0, r_0), \quad \dot{\Gamma}[x_0, r_0, \alpha_0] (0) = (\cos(\alpha_0), \sin(\alpha_0)),
\]
and we define $\underline{\Gamma}$ to be the space of all curves $\Gamma[x_0, r_0, \alpha_0]$.

There are several particular curves of interest belonging to $\underline{\Gamma}$, namely the embedded ones.  The known embedded curves are the semi-circle $\sqrt{2n}(\cos(s), \sin (s))$, the lines $(0, s)$ and $(s, \sqrt{2(n - 1)})$, and a closed convex curve discovered by Angenent in~\cite{A}. We will refer to these curves as the sphere, the plane, the cylinder, and Angenent's torus (since the rotations of these curves about the $x$-axis respectively generate a sphere $S^n$, a plane $\mathbb{R}^n$, a cylinder $\mathbb{R} \times S^{n-1}$, and a torus $S^1 \times S^{n-1}$). For convenience, we denote the sphere, the plane, and the cylinder curves by $\mathcal{S}$, $\mathcal{P}$, and $\mathcal{C}$, respectively. It follows from a theorem of the second author and M{\o}ller in~\cite{KM} that the sphere of radius $\sqrt{2n}$, the plane, and the cylinder of radius $\sqrt{2(n - 1)}$ are the only embedded, rotationally symmetric self-shrinkers of their respective topological type. It is unknown if Angenent's torus is the only embedded, rotationally symmetric $S^1 \times S^{n-1}$ self-shrinker.

Though the metric $g_{Ang}$ and~(\ref{SSEq}) are degenerate at the boundary $\{ r = 0\}$ (the $x$-axis), there is still a smooth one parameter family of initial value problems (see the Appendix of~\cite{D}, or Theorem 2.2 in~\cite{BG}), which we denote by $Q[x_0]$, satisfying
\[
Q [x_0] (0)  = (x_0, 0), \quad \dot{Q}[x_0] (0) = (0, 1).
\]
The degeneracy of~(\ref{SSEq}) reflects the imposed axial symmetry of our surfaces, and amounts to the fact that the tangent space of  a smooth axially symmetric surface at the axis of symmetry is a perpendicular plane. We note that $Q[\sqrt{2n}]$ and $Q[0]$ are the sphere and the plane, respectively. It was shown by the first author in~\cite{D} that there is $0<x_1<\sqrt{2n}$ so that $Q[x_1]$ is the profile curve of an immersed sphere self-shrinker.

It will be useful to view a curve $\Gamma \in \underline{\Gamma}$ from three different perspectives: as a function $(x,u(x))$ over the $x$-axis, as a function $(f(r),r)$ over the $r$-axis, and as a geodesic for the metric $g_{Ang}$. The differential equations satisfied by $u(x)$ and $f(r)$ place limitations on the oscillatory behavior of $\Gamma$, and we will use these equations to describe the basic shape of the curves in $\underline{\Gamma}$. In addition, we will use the continuity properties of geodesics and the Gauss-Bonnet formula to establish convergence properties for the curves in $\underline{\Gamma}$. 

When $\Gamma \in \underline{\Gamma}$ is given as $(x, u(x))$, the function $u(x)$ satisfies the differential equation
\begin{equation}
\label{x_graph_SSEq}
\frac{u''}{1 + (u')^2} = \frac{xu'}{2} - \frac{u}{2} + \frac{n-1}{u}.
\end{equation}
This equation can be derived either directly from~(\ref{ss}) or by using the geodesic equation~(\ref{SSEq}). Differentiating~(\ref{x_graph_SSEq}), we have
\begin{equation}
\label{x_graph_SSEq_differentiated}
\frac{u'''}{1 + (u')^2} = \frac{2 u' (u'')^2}{(1 + (u')^2)^2} + \frac{x u''}{2} - \frac{n-1}{u^2}u'.
\end{equation}
Similarly, when $\Gamma$ is given as $(f(r), r)$, we have
\begin{equation}
\label{r_graph_SSEq}
\frac{f''}{1 + (f')^2} =  \left( \frac{r}{2} - \frac{n - 1}{r} \right)f' - \frac{f}{2}
\end{equation}
and
\begin{equation}
\label{r_graph_SSEq_differentiated}
\frac{f'''}{1 + (f')^2} = \frac{2 f' (f'')^2}{(1 + (f')^2)^2} + \left( \frac{r}{2} - \frac{n-1}{r} \right) f'' + \frac{n-1}{r^2}f' .
\end{equation}
We note that applying the Gauss-Bonnet formula (see~\cite{DC}, p.274) to a simple, compact region $R$ in $\mathbb{H}$ whose boundary is the piecewise smooth union of geodesic segments with external angles $\theta_0, \dots, \theta_k$, gives the formula
\begin{equation}
\label{gauss_bonnet}
\int_{R} \left( 1+ \frac{n-1}{r^2}\right) dxdr = 2\pi - \sum_{i=0}^k \theta_i.
\end{equation}

This preliminary section is divided into three parts. First, we introduce some terminology and recall some known results about self-shrinkers. Then, we study the shape of solutions to~(\ref{x_graph_SSEq}). Finally, we use the Gauss-Bonnet formula~(\ref{gauss_bonnet}) to prove some convergence results for geodesics.


\subsection{Definitions and background}
\label{prelim:defn}

Our construction of immersed self-shrinkers follows from the study of the geometry of geodesic segments that are maximally extended as graphs over the $x$-axis.  The plane and the cylinder are degenerate in the sense that their Euclidean curvature vanishes. We will refer to a geodesic whose Euclidean curvature is not identically $0$ as \emph{non-degenerate}. We denote by $\underline{\Lambda}$ the space of non-degenerate geodesic segments that are maximally extended as graphs over the $x$-axis: $\Lambda \in \underline{\Lambda}$ if and only if $\Lambda \neq \mathcal{P}, \mathcal{C}$ is the graph of a maximally extended solution to~(\ref{x_graph_SSEq}). We note that the plane and the cylinder are the only geodesics that are not the union of elements of $\underline{\Lambda}$.

First, we describe the decomposition of a non-degenerate geodesic into the union of elements of $\underline{\Lambda}$. Given a non-degenerate geodesic of the form $\Gamma[x_0, r_0, \alpha_0]$, where $\cos(\alpha_0) \neq 0$, there exists a unique maximally extended solution $u:(a,b) \to \mathbb{R}$ to~(\ref{x_graph_SSEq}) with $u(x_0) = r_0$ and $u'(x_0) = \tan(\alpha_0)$. We define $\Lambda[0](\Gamma[x_0, r_0, \alpha_0]) \in \underline{\Lambda}$ to be the graph of $u$. If $b < \infty$ and $u(b)>0$ (see Lemma~\ref{generic_graph_behavior2}), then the geodesic $\Gamma[x_0, r_0, \alpha_0]$ can be continued past the point $(b,u(b))$, and we denote this next maximally extended geodesic segment by $\Lambda[1](\Gamma[x_0, r_0, \alpha_0])$. In general, when it is defined, we use $\Lambda[k](\Gamma[x_0, r_0, \alpha_0]) \in \underline{\Lambda}$, $k \in \mathbb{Z}$, to denote the $k^{th}$ maximally extended geodesic segment encountered in the parametrization of $\Gamma[x_0, r_0, \alpha_0]$, so that we get the (possibly finite) decomposition
\[
\Gamma[x_0, r_0, \alpha_0] = \cdots \cup \Lambda[-1](\Gamma[x_0, r_0, \alpha_0]) \cup  \Lambda[0](\Gamma[x_0, r_0, \alpha_0]) \cup  \Lambda[1] (\Gamma[x_0, r_0, \alpha_0]) \cup \cdots.
\]
When $\cos(\alpha_0) = 0$, we define $\Lambda[k](\Gamma[x_0, r_0, \alpha_0])$ similarly.

Next, we introduce a topology on $\underline{\Lambda}$. Since every $\Lambda \in \underline{\Lambda}$ intersects the $r$-axis exactly once (see Proposition~\ref{generic_graph_behavior1}), there exists a unique pair $(r_\Lambda, \alpha_\Lambda) \in \mathbb{R}^+ \times (- \pi/2, \pi/ 2)$ such that
\[
\Lambda =  \Lambda[0](\Gamma[0, r_\Lambda, \alpha_\Lambda]). 
\]
Then $\underline{\Lambda}$ carries a topology induced by the natural distance function $d$ defined by
\[
d(\Lambda_1, \Lambda_2) = |r_{\Lambda_2} - r_{\Lambda_1}| + |\alpha_{\Lambda_2} - \alpha_{\Lambda_1}|.
\]
By the continuity of geodesics, we know that a sequence $\Lambda_i \in \underline{\Lambda}$ converges smoothly to $\Lambda_\infty \in \underline{\Lambda}$ on compact subsets of $\mathbb{H}$ if and only if $d(\Lambda_i, \Lambda_\infty) \to 0$.

To give a detailed description of the shape of a geodesic, we need to identify the points where its Euclidean curvature vanishes. For a $C^2$ curve $\gamma$ in the upper half plane, we define the \emph{degree} of $\gamma$ to be the cardinality of the set where its Euclidian curvature vanishes, and we denote it by $deg(\gamma)$. In Section~\ref{prelim:x_graph} we show that each geodesic segment $\Lambda \in \underline{\Lambda}$ satisfies $deg(\Lambda) \leq 2$ (see Proposition~\ref{graphical_geodesic_degree_bound}). We denote the space of all degree $k$ curves in $\underline{\Lambda}$ by $\underline{\Lambda} (k)$, so that we have the following decomposition of $\underline{\Lambda}$:
\[
\underline{\Lambda} = \bigcup_{k = 0}^2\underline{\Lambda} (k) .
\]
Writing a geodesic segment $\Lambda \in \underline{\Lambda}(k)$ as the graph of a maximally extended solution $u:(a,b) \to \RR$ to~(\ref{x_graph_SSEq}), we know that $u''$ has a fixed sign near $b$ (since $u''$ vanishes at $k$ points). Therefore, we can decompose $\underline{\Lambda} (k)$ into the subsets $\underline{\Lambda}(k, +)$ and $\underline{\Lambda}(k, -)$, depending on the sign of $u''$ near its right end point. That is, we define $\underline{\Lambda}(k, +)$ to be the subset of $\underline{\Lambda}(k)$ consisting of maximally extended geodesic segments that are concave up near their right end points, and we define $\underline{\Lambda}(k, -)$ similarly.

In Section~\ref{prelim:gauss_bonnet} we show that the boundaries of the sets $\underline{\Lambda} (k)$ in the (non-complete) topology on $\underline{\Lambda}$ consist of curves which exit the upper-half plane either through the $x$-axis or through infinity (see Proposition~\ref{boundaries_by_type}). We refer to the elements of $\underline{\Lambda}$ that exit the upper-half plane either through the $x$-axis or through infinity as \emph{half-entire graphs}, and we denote the set of all half-entire graphs by $\underline{H}$. The geodesics $Q[x_0]$ defined above correspond to a family of half-entire graphs that exit through the $x$-axis, namely the geodesic segments $\Lambda[0](Q[x_0])$. Using the linearization of~(\ref{x_graph_SSEq}) near the sphere, Huisken's theorem on mean-convex self-shrinkers, and a comparison result for solutions to~(\ref{r_graph_SSEq}), we can prove the following result.
\begin{proposition}
\label{prop:q_sphere_intersect}
Let $Q = \Lambda[0](Q[x_0])$. Then $r_Q > \sqrt{2n}$ and $\alpha_Q<0$ when $0<x_0<\sqrt{2n}$, and $r_Q < \sqrt{2n}$ and $\alpha_Q>0$ when $x_0>\sqrt{2n}$.
\end{proposition}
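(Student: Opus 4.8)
The plan is to treat $x_0 = \sqrt{2n}$, where $Q = \mathcal{S}$ and $(r_Q,\alpha_Q) = (\sqrt{2n},0)$, as a reference and to track the two scalar quantities $r_Q(x_0)$ and $\alpha_Q(x_0)$ as $x_0$ moves away from $\sqrt{2n}$. Since every $\Lambda[0](Q[x_0])$ is a non-degenerate maximal $x$-graph (for $x_0 \neq 0$) it meets the $r$-axis exactly once by Proposition~\ref{generic_graph_behavior1}, so $(r_Q,\alpha_Q)$ is well defined and, by continuous dependence of the initial value problems $Q[x_0]$ on $x_0$, depends continuously on $x_0 \in (0,\infty)$. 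The proposition then reduces to two claims: (i) a local determination of the signs of $r_Q - \sqrt{2n}$ and $\alpha_Q$ for $x_0$ near $\sqrt{2n}$, and (ii) a global statement that neither $\alpha_Q$ nor $r_Q - \sqrt{2n}$ can vanish for $x_0 \neq \sqrt{2n}$, so that the signs found in (i) persist on each of the intervals $(0,\sqrt{2n})$ and $(\sqrt{2n},\infty)$.

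For (i) I would differentiate the family in $x_0$: the field $\phi = \partial_{x_0} u$ is a Jacobi field along the sphere for the linearization of~(\ref{x_graph_SSEq}) (equivalently, near the axis, for the linearization of~(\ref{r_graph_SSEq}) with data $\partial_{x_0} f(0) = 1$, $\partial_{x_0} f'(0) = 0$ coming from the perpendicular launch). Integrating this linear ODE from the axis to the apex $x = 0$ and reading off the crossing data gives $\partial_{x_0} r_Q = \phi(0)$ and $\partial_{x_0}\alpha_Q = \phi'(0)$ at $x_0 = \sqrt{2n}$; the goal is to show $\phi(0) < 0$ and $\phi'(0) > 0$, which yields $r_Q > \sqrt{2n}$, $\alpha_Q < 0$ for $x_0$ slightly below $\sqrt{2n}$ and the opposite signs slightly above. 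Because the Gaussian curvature of $g_{Ang}$ is $K = 1 + (n-1)/r^2 > 0$ (see~(\ref{gauss_bonnet})), Jacobi fields oscillate, so these signs must be extracted from the explicit solution rather than from any monotonicity; this computation is routine but must be done carefully.

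For the global sign of $\alpha_Q$ I would use the reflection symmetry $x \mapsto -x$ of~(\ref{x_graph_SSEq}) together with Huisken's theorem~\cite{Hu}. If $\alpha_Q(x_0) = 0$ for some $x_0 \neq \sqrt{2n}$, then $u'(0) = 0$, and uniqueness for~(\ref{x_graph_SSEq}) forces $u$ to be even; the full geodesic is then symmetric across the $r$-axis and closes up into a compact, rotationally symmetric $S^n$ self-shrinker meeting the axis perpendicularly at $(\pm x_0, 0)$. Using the degree bound $\deg \leq 2$ (Proposition~\ref{graphical_geodesic_degree_bound}) and the shape analysis of~(\ref{x_graph_SSEq}), one checks that this closed profile is mean-convex, so Huisken's theorem forces it to be the round sphere, i.e. $x_0 = \sqrt{2n}$, a contradiction. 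Thus $\alpha_Q \neq 0$ on each interval, and by continuity $\alpha_Q$ keeps the sign found in (i).

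Finally, for the sign of $r_Q - \sqrt{2n}$ I would invoke the comparison result for~(\ref{r_graph_SSEq}) to couple it to the now-known sign of $\alpha_Q$: comparing the $r$-graph of $Q[x_0]$ near the axis with that of the sphere controls the position of the $r$-axis crossing and shows $r_Q > \sqrt{2n}$ exactly when $\alpha_Q < 0$. I expect the main obstacle to be precisely this global coupling together with the mean-convexity verification in the previous step: since $K > 0$ the variation $\partial_{x_0} f$ need not keep a sign, so $r_Q$ is not monotone in $x_0$ and one cannot argue by monotonicity; the real work is to rule out $\alpha_Q = 0$ (and hence $r_Q = \sqrt{2n}$) for every $x_0 \neq \sqrt{2n}$ in the range, which is what makes Huisken's rigidity theorem and the comparison principle both necessary.
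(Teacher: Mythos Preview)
Your three-step outline is the paper's proof: linearize at $x_0=\sqrt{2n}$, globalize the sign of $\alpha_Q$ via Huisken's rigidity for mean-convex self-shrinkers, and settle $r_Q$ by comparison with the sphere. A few remarks on execution. For (i), the paper passes to polar coordinates $(\rho,\phi)$ so that the sphere is $\rho\equiv\sqrt{2n}$; the variation equation becomes $w''=-\tfrac{n-1}{\tan\phi}\,w'-2nw$, which under $\xi=\cos\phi$ is a Legendre-type ODE, and a short maximum-principle analysis of its second derivative yields your $\phi(0)<0$, $\phi'(0)>0$. For (ii), the paper checks mean-convexity by showing $\Psi=xu'-u<0$ directly (a nonnegative interior maximum of $\Psi$ contradicts~(\ref{x_graph_SSEq}) since $\Psi'=xu''$); your degree-bound route also works, but note that Proposition~\ref{graphical_geodesic_degree_bound} appears after the present proposition in the paper, so you should check it is logically independent (it is: its proof uses only Lemmas~\ref{shooting_horizontal_below_cylinder_is_degree_zero} and~\ref{monotonicity_after_minimum}).

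Step (iii) is where your sketch is thinnest, and the paper does \emph{not} couple $r_Q$ to $\alpha_Q$ as you suggest. Instead it shows directly (Appendix~A) that the $r$-graph $f$ of $Q[x_0]$ meets the sphere $g(r)=\sqrt{2n-r^2}$ exactly once before $f$ vanishes, from which the sign of $r_Q-\sqrt{2n}$ follows. Because $f$ and $g$ share the same first three derivatives at $r=0$ (the nonlinear dependence of $f^{(iv)}(0)$ on $f(0)$ in~(\ref{cp:eqn:2}) is what breaks the tie), a naive barrier argument for~(\ref{r_graph_SSEq}) does not work; the paper studies the quotients $v=f/g$ and $w=f'/g'$ and shows that an interior extremum of either forces a sign contradiction in its second derivative. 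This is the step that requires the most care, and a generic ``comparison result'' will not close it.
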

\begin{proof}
The proof of this proposition follows from the results in Appendix A and Appendix B. Using the linearization of~(\ref{x_graph_SSEq}) near the sphere and Huisken's theorem, we have $\alpha_Q<0$ when $0<x_0<\sqrt{2n}$, and $\alpha_Q>0$ when $x_0>\sqrt{2n}$ (see Proposition~\ref{polar:prop1} in Appendix B). Using the comparison results from Appendix A (see Proposition~\ref{cp:prop1} and Proposition~\ref{cp:prop2}), we know that $Q$ intersects the sphere exactly once in the first quadrant, so that $r_Q > \sqrt{2n}$ when $0<x_0<\sqrt{2n}$, and $r_Q < \sqrt{2n}$ when $x_0>\sqrt{2n}$.
\end{proof}

A second family of half-entire graphs was constructed by the second author and M{\o}ller (see Theorem 3 in~\cite{KM}). They showed that for each fixed ray through the origin $r_\sigma(x) = \sigma x$, $\sigma > 0$, there exists a unique (non-entire) solution $u_\sigma$ to~(\ref{x_graph_SSEq}), called a trumpet, asymptotic to $r_\sigma$ so that: $u_\sigma$ is defined on $[0, \infty)$; $u_\sigma(0) < \sqrt{2(n-1)}$; and $u_\sigma > r_\sigma$, $0< u_\sigma'< \sigma$, and $u_\sigma'' > 0$ on $[0,\infty)$. In addition, they showed that any solution to~(\ref{x_graph_SSEq}) defined on an interval $(a, \infty)$ must be either be a trumpet $u_\sigma$  or the cylinder $u \equiv \sqrt{2(n-1)}$. An immediate consequence of this last result is that the cylinder is the only entire solution to~(\ref{x_graph_SSEq}).

Now, we introduce some notation for the previously discussed half-entire graphs:
\begin{itemize}
\item[] \emph{Inner-quarter spheres:} The set $\underline{I}^+$  of inner-quarter spheres in the first quadrant is the collection of curves of the form $ I_x  : = \Lambda[0](Q[x])$, for $0< x < \sqrt{2n}$. Each $I \in \underline{I}^+$ intersects the $r$-axis above the sphere with negative slope: 
\[
r_I > \sqrt{2n }, \quad \alpha _I  < 0.
\]

\item[] \emph{Outer-quarter spheres:} The set $\underline{O}^+$ of outer-quarter spheres in the first quadrant is the collection of curves of the form $O_x : = \Lambda[0] (Q[x])$, for $x> \sqrt{2n}$. Each $O \in \underline{O}^+$ intersects the $r$-axis below the sphere with positive slope:
\[
r_O <\sqrt{2n }, \quad \alpha_O > 0.
\]

\item[] \emph{Trumpets:} The set $\underline{T}^+$ of trumpets in the first quadrant is the collection of the graphs of $u_\sigma$, where $u_\sigma$ are the trumpets from~\cite{KM}. Each $T \in \underline{T}^+$ intersects the $r$-axis below the cylinder with a positive slope:
\[
r_T < \sqrt{2(n - 1)}, \quad \alpha_T > 0.
\]
\end{itemize}
The sets of half-entire graphs in the second quadrant: $\underline{I}^-$, $\underline{O}^-$, and $\underline{T}^-$ are defined similarly.

We also introduce the sets: $$\underline{I} = \underline{I}^+ \cup \underline{I}^- \cup \{\mathcal{S}\}, \qquad \underline{O} = \underline{O}^+ \cup \underline{O}^- \cup \{ \mathcal{S} \}, \qquad \underline{T} = \underline{T}^+ \cup \underline{T}^- .$$ In Proposition~\ref{half_entire_graph_types}, we show that the space $\underline{H}$ of half-entire graphs is the union of the sets $\underline{I}$, $\underline{O}$, and $\underline{T}$.


\subsection{The shape of graphical geodesics}
\label{prelim:x_graph}

In this section, we study the shape of solutions to~(\ref{x_graph_SSEq}). This involves proving several results that place limitations on the possible behavior of these solutions. The main results in this section are Proposition~\ref{graphical_geodesic_degree_bound}, which shows that the Euclidean curvature $u''/(1+(u')^2)^{3/2}$ of a solution to~(\ref{x_graph_SSEq}) vanishes at no more than two points, and Proposition~\ref{half_entire_graph_types}, which addresses the classification and the shapes of half-entire graphs.

Let $u$ be a solution to~(\ref{x_graph_SSEq}). If $u$ has a local maximum (minimum) at a point $x_0$, then $u(x_0) \geq \sqrt{2(n-1)}$ ($\leq \sqrt{2(n-1)}$) with equality if and only if $u \equiv \sqrt{2(n-1)}$ is the cylinder. Also, if both $u'$ and $u''$ vanish at the same point, then $u$ must be the cylinder. Using~(\ref{x_graph_SSEq_differentiated}), when $u$ is non-degenerate\footnote{A solution to~(\ref{x_graph_SSEq}) is non-degenerate if it is not the cylinder.}, we see that $u'$ and $u'''$ have opposite signs at points where $u''=0$, so that the zeros of $u''$ are separated by zeros of $u'$.

It follows from the previous discussion that a non-degenerate solution to~(\ref{x_graph_SSEq}) has a sinsusoidal shape that oscillates between maxima above the cylinder and minima below the cylinder. In the next part of this section, we show that a maximally extended non-degenerate solution to~(\ref{x_graph_SSEq}) must intersect the $r$-axis, and its Euclidean curvature can only vanish at a finite number of points.

\begin{proposition}
\label{generic_graph_behavior1}
Let $u: (a, b) \rightarrow \mathbb{R}$ be a non-degenerate maximally extended solution to~\emph{(\ref{x_graph_SSEq})}. Then $a < 0 < b$. Moreover, $u''$ can only vanish at a finite number of points.
\end{proposition}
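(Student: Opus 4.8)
The plan is to build the proof around a single monotone quantity extracted from~(\ref{x_graph_SSEq}). Multiplying~(\ref{x_graph_SSEq}) by $u'$ and integrating the resulting logarithmic derivative, one checks that the function
\[
E(x) := \tfrac12 \log\!\big(1+(u'(x))^2\big) + \tfrac14\, u(x)^2 - (n-1)\log u(x)
\]
satisfies $E'(x) = \tfrac{x}{2}(u'(x))^2$. Hence $E$ is non-increasing on $\{x \le 0\}$ and non-decreasing on $\{x \ge 0\}$. The key elementary fact I need about the endpoints is that $E(x) \to +\infty$ as $x$ approaches any \emph{finite} endpoint of $(a,b)$: a maximally extended graph must degenerate there through one of the modes $u \to 0$, $u \to \infty$, or $u' \to \pm\infty$, and in each mode one of the three terms of $E$ diverges to $+\infty$ while the remaining terms stay bounded below (here one uses that $\tfrac14 u^2 - (n-1)\log u$ is bounded below on $(0,\infty)$, being strictly convex with minimum at $\sqrt{2(n-1)}$).

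To prove $a < 0 < b$, suppose instead $a \ge 0$. Then $x > 0$ throughout $(a,b)$, so $E$ is non-decreasing on $(a,b)$; but $a$ is finite, so $E(x) \to +\infty$ as $x \to a^+$, which is incompatible with $E$ being non-decreasing and finite at interior points. Therefore $a < 0$, and the symmetric argument, using that $E$ is non-increasing on $\{x \le 0\}$ and blows up at a finite right endpoint, gives $b > 0$. (Equivalently, $b>0$ follows from $a<0$ via the reflection symmetry $x \mapsto -x$ of~(\ref{x_graph_SSEq}).)

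For the finiteness of the zeros of $u''$, recall the two facts noted before the statement: at a zero of $u''$, equation~(\ref{x_graph_SSEq_differentiated}) reduces to $u''' = -\tfrac{(n-1)(1+(u')^2)}{u^2}\,u'$, so $u'$ and $u'''$ have opposite signs and consecutive zeros of $u''$ are separated by a zero of $u'$; and $u' = u'' = 0$ at one point forces $u$ to be the cylinder. Thus it suffices to show the critical points of $u$ do not accumulate. They cannot accumulate at an interior point $x_*$: a sequence of critical points converging to $x_*$ would, by continuity and Rolle's theorem applied between them, force $u'(x_*) = u''(x_*) = 0$, making $u$ the cylinder, contrary to non-degeneracy. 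They cannot accumulate at $+\infty$ (nor, symmetrically, at $-\infty$): a non-degenerate solution defined on a half-line is, by the classification of the second author and M{\o}ller in~\cite{KM}, a trumpet, and trumpets satisfy $u'' > 0$, hence have no critical points for large $x$.

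The remaining, and main, obstacle is to exclude accumulation of critical points at a finite endpoint, say $b < \infty$ (so $b > 0$ by the first part). Near $b$ the function $E$ is non-decreasing, so the critical values $E(x_i) = \tfrac14 u(x_i)^2 - (n-1)\log u(x_i)$ increase; since this function is strictly convex with minimum at $\sqrt{2(n-1)}$ and the critical values alternate between maxima (values $> \sqrt{2(n-1)}$) and minima (values $< \sqrt{2(n-1)}$), the maxima must increase to $+\infty$ and the minima decrease to $0$. I would then derive a contradiction from this forced blow-up of the amplitude by a Gauss--Bonnet estimate: the minima approaching the $x$-axis make the density $1 + \tfrac{n-1}{r^2}$ in~(\ref{gauss_bonnet}) large on the region swept out near $b$, whereas the total turning of the tangent along each graphical arch between consecutive critical points is bounded by $\pi$; applying~(\ref{gauss_bonnet}) to the regions bounded by successive arches then produces more total curvature than the bounded turning allows. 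I expect this Gauss--Bonnet control of the deep oscillations near a finite endpoint to be the delicate step, consistent with its billing in the introduction as a new feature of the construction.
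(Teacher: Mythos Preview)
Your energy functional $E$ is a genuinely different device from anything in the paper, and the proof that $a<0<b$ via monotonicity and properness of $E$ is both correct and cleaner than the paper's argument. The paper instead shows directly that $u'$ and $u''$ have the same sign near a finite endpoint (using Lemma~\ref{x_graph_concave_below}) and then reads off the sign of the endpoint from~(\ref{x_graph_SSEq}) by a case analysis on whether $u$ tends to $0$, to $\infty$, or to a finite positive value. Your route replaces all of that with one line. Likewise, your reduction of the second claim to excluding accumulation of critical points at a finite endpoint, and the observation that $E\to+\infty$ there forces the maxima to diverge and the minima to tend to zero, are correct once $b>0$ is in hand.

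The gap is the final Gauss--Bonnet step, which you yourself flag as only a sketch. The formula~(\ref{gauss_bonnet}) applies to regions with \emph{geodesic} boundary; your ``regions bounded by successive arches'' are not closed by geodesics, and the natural Euclidean closing segments near height $m_k$ carry integrated geodesic curvature in $g_{Ang}$ of order $(n-1)/m_k$ per unit Euclidean length --- the same order as the density $1+(n-1)/r^2$ you want to exploit --- so the two divergences compete and no contradiction is evident. The paper in fact does \emph{not} use Gauss--Bonnet here at all. It argues that the oscillation amplitude is bounded away from the cylinder (else continuous dependence at the cylinder solution would extend $u$ past $b$), rewrites the repeated crossings of $r=\sqrt{2(n-1)}$ as graphs $f_k$ over the $r$-axis on a fixed interval, and shows these converge to the vertical geodesic $\Gamma[b,\sqrt{2(n-1)},\pi/2]$; Lemma~\ref{r_graph_converge} then forces the curve either to become non-graphical near $b$ (if $b\neq 0$) or to extend past $b$ (if $b=0$).

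Your energy actually hands you the missing piece of the paper's argument for free: at each crossing of $r=\sqrt{2(n-1)}$ one has $E=\tfrac12\log(1+(u')^2)+\text{const}$, so $E\to+\infty$ gives $|u'|\to\infty$ there, i.e.\ $f_k'(\sqrt{2(n-1)})=1/u'\to 0$, which is exactly the hypothesis of Lemma~\ref{r_graph_converge}. Finishing that way, rather than via Gauss--Bonnet, would make your approach complete and strictly more streamlined than the paper's.
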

\begin{proof}
First, suppose $b < \infty$. We claim that $u$ cannot oscillate too much near $b$. To see this, suppose to the contrary that $u''$ vanishes in every neighborhood of $b$. Then there exists an increasing sequence $x_k \to b$ that alternates between maxima and minima of $u$. Applying the continuity of the differential equation~(\ref{x_graph_SSEq}) to the cylinder solution, we see that there is $\varepsilon >0$ so that $|u(x_k) - \sqrt{2(n-1)}|> \varepsilon$; otherwise, we can extend $u$ past $b$. Then the graph of $u(x)$ contains geodesic segments (defined as graphs over the $r$-axis on the fixed neighborhood $|r - \sqrt{2(n-1)}|<\varepsilon$) that converge to the curve $\Gamma[b, \sqrt{2(n-1)}, \pi/2]$. When $b \neq 0$, this forces the graph of $u(x)$ to become non-graphical (near $b$), and when $b=0$ this forces $u$ to extend past $b$ (see Lemma~\ref{r_graph_converge}). Thus, we have shown there is a neighborhood of $b$ in which $u''$ does not vanish.

Next, we show that $b>0$. We know that $u''$ does not vanish in a neighborhood of $b$. Examining equation~(\ref{x_graph_SSEq}) and using Lemma~\ref{x_graph_concave_below}, we see that $u'(x)$ and $u''(x)$ must have the same sign when $x$ is near $b$. If $\lim_{x \to b}u(x) \in (0,\infty)$, then $\lim_{x \to b}|u'(x)| = \infty$ (since $u$ is maximally extended), and it follows from~(\ref{x_graph_SSEq}) that $b \geq 0$. In fact, $b>0$, since the graph of $u$ is not the plane. If $\lim_{x \to b}u(x)$ is $0$ (or $\infty$), then $u'$ and $u''$ are both negative (or both positive), and the term $\frac{n-1}{u} - \frac{u}{2}$ has the correct sign to force $b>0$.

Finally, when $b = \infty$, so that $u$ is a solution to~(\ref{x_graph_SSEq}) on $(a, \infty)$, we know that $u$ is either a trumpet or the cylinder. Since $u$ is non-degenerate, it is a trumpet, and $u''>0$ on $[0,\infty)$. We conclude that $b>0$ and $u''$ does not vanish in a neighborhood of $b$. Similar arguments may be applied to the left end point $a$ to complete the proof of the lemma.
\end{proof}

The following two lemmas were used in the proof of Proposition~\ref{generic_graph_behavior1}.
\begin{lemma}
\label{r_graph_converge}
Let $f_k(r)$ be a sequence of maximally extended solutions to~\emph{(\ref{r_graph_SSEq})} defined on the neighborhood $|r - \sqrt{2(n-1)}| \leq \varepsilon$, for some $\varepsilon >0$. Suppose $f_k(\sqrt{2(n-1)})$ is an increasing sequence that converges to $b < \infty$, and $f_k'(\sqrt{2(n-1)}) \to 0$. If $b \neq 0$, then the graph of $f_k$ cannot be written as a graph over the $x$-axis for $k$ sufficiently large. If $b=0$, then $f_k$ must vanish at some point for $k$ sufficiently large.
\end{lemma}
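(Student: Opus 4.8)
The plan is to reduce everything to the behavior of the limiting geodesic obtained by letting $k\to\infty$. Writing $r_* = \sqrt{2(n-1)}$ for the cylinder radius, the hypotheses $f_k(r_*)\to b$ and $f_k'(r_*)\to 0$, together with the smooth dependence of solutions of~(\ref{r_graph_SSEq}) on their initial data, show that $f_k$ converges in $C^2$, on compact subsets of the common domain, to the solution $f_\infty$ of~(\ref{r_graph_SSEq}) with $f_\infty(r_*)=b$ and $f_\infty'(r_*)=0$; equivalently, the associated geodesics converge to $\Gamma[b, r_*, \pi/2]$. The two cases of the lemma correspond to the two qualitatively different shapes of this limit, and the whole point is that the coefficient $\tfrac{r}{2}-\tfrac{n-1}{r}$ in~(\ref{r_graph_SSEq}) vanishes exactly at $r=r_*$.

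First I would treat the case $b\neq 0$. Evaluating~(\ref{r_graph_SSEq}) at $r=r_*$, where $\tfrac{r_*}{2}-\tfrac{n-1}{r_*}=0$, gives $f_\infty''(r_*) = -b/2 \neq 0$, so $x=f_\infty(r)$ has a nondegenerate critical point at $r_*$ (a strict local maximum when $b>0$, a strict local minimum when $b<0$). By the $C^2$ convergence, for all large $k$ the function $f_k''$ has the sign of $-b/2$ throughout a fixed interval $[r_*-\delta, r_*+\delta]$, while $f_k'$ changes sign on that interval (since $f_\infty'$ does); hence $f_k$ itself has a strict interior local extremum. Two distinct values of $r$ then share a common value of $x=f_k(r)$, so the graph of $f_k$ fails to be a graph over the $x$-axis, as claimed.

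The case $b=0$ is where the real work lies, since now $f_\infty\equiv 0$ is the plane and I must show that the nearby curves actually \emph{reach} the axis rather than merely hugging it. Because $f_k(r_*)\uparrow 0$, we have $f_k(r_*)<0$, so each $f_k$ starts to the left of the plane. The key structural fact is that $p(r):=\tfrac{r}{2}-\tfrac{n-1}{r}>0$ for every $r>r_*$. Consequently, as long as $f_k<0$ and $f_k'\ge 0$ on the region $r>r_*$, the right-hand side of~(\ref{r_graph_SSEq}) is a sum of two nonnegative terms with $-f_k/2>0$, forcing $f_k''>0$; thus $f_k'$ is increasing and positive and $f_k$ is driven monotonically upward until it meets $x=0$ at a finite value of $r$. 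To start this mechanism I would use that $f_k''(r_*) = -(1+f_k'(r_*)^2)f_k(r_*)/2>0$, so that even if $f_k'(r_*)$ is slightly negative, $f_k'$ increases through $0$ while $f_k$ is still negative; this is quantified by the $C^2$-closeness of $f_k$ to $0$ on a fixed interval $[r_*, r_*+\delta_0]$, which follows from continuous dependence since the plane extends to all $r>0$. Finally, since the amplitude of $f_k$ is small for large $k$, its slope stays bounded up to the first zero, so the curve remains a graph over the $r$-axis and genuinely attains the value $0$.

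The main obstacle is precisely this last case: ruling out the possibility that $f_k$ approaches the axis only asymptotically, or leaves the class of graphs over the $r$-axis (i.e.\ develops a vertical tangent) before reaching it. Both are controlled by the sign of $p(r)$ for $r>r_*$, which makes the geodesic-curvature term cooperate with the restoring term $-f_k/2$ instead of damping it; this is the one place where the location $r_*=\sqrt{2(n-1)}$ of the cylinder, and not merely the convergence $f_k\to f_\infty$, is used in an essential way. I would also double-check the borderline bookkeeping near $r_*$, where $f_k$, $f_k'$, and the driving terms are all of the same small order, by comparing $f_k$ with the solution of the linearization $\phi''=p\phi'-\phi/2$ of~(\ref{r_graph_SSEq}) about the plane.
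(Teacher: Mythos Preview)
Your treatment of $b\neq 0$ is correct and in fact more carefully argued than the paper's one-line version: you explain why the nondegenerate extremum of $f_\infty$ forces $f_k$ to have a strict local extremum nearby and hence to fail to be a graph over the $x$-axis.

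The gap is in the case $b=0$. You correctly name ``the main obstacle''---ruling out that $f_k$ approaches zero only asymptotically or develops a vertical tangent first---but you do not close it. The sign observation that $f_k''>0$ while $f_k<0$, $f_k'\ge 0$, $r>r_*$ only shows $f_k$ is convex and increasing there; it does not produce a bound $R$, independent of $k$, by which $f_k$ must already have vanished. Without such an $R$ you cannot invoke continuous dependence to guarantee $f_k$ is even defined where you need it, and nothing in your argument prevents the zero from drifting to infinity as the data $(f_k(r_*),f_k'(r_*))\to(0,0)$. The subcase $f_k'(r_*)<0$ is likewise unsettled: your claim that $f_k'$ ``increases through $0$ while $f_k$ is still negative'' on $\{r>r_*\}$ requires comparing the two small competing terms $p(r)f_k'<0$ and $-f_k/2>0$, and their ratio is not controlled by the hypotheses.

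The paper handles both issues by first proving two quantitative estimates, Lemma~\ref{r_graph:crossing:above} and Lemma~\ref{r_graph:crossing:below}, which give fixed constants $M_1=r_*+2$ and $m_1\in(0,r_*)$ such that a positive solution with the relevant monotonicity must cross zero inside $[r_*,M_1]$ or $[m_1,r_*]$, respectively. With those in hand the $b=0$ case is immediate: continuous dependence on the plane ensures $f_k$ is defined on all of $[m_1,M_1]$ for large $k$; then one applies Lemma~\ref{r_graph:crossing:above} to $-f_k$ on $\{r\ge r_*\}$ when $f_k'(r_*)\ge 0$, and Lemma~\ref{r_graph:crossing:below} to $-f_k$ on $\{r\le r_*\}$ when $f_k'(r_*)<0$. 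Note in particular that the two sign cases are handled on \emph{opposite} sides of $r_*$, which sidesteps the competition of terms that stalls your one-sided argument. Your suggested comparison with the linearization could be made to work as an alternative, but it has to be carried out, not merely named.
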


\begin{lemma}
\label{x_graph_concave_below}
Let $u$ be a solution to~\emph{(\ref{x_graph_SSEq})} defined on a finite interval $(x_1,x_2)$. If $u'<0$ and $u''>0$ on $(x_1,x_2)$, then $\lim_{x \to x_2}u(x)>0$.
\end{lemma}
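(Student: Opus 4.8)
The plan is to argue by contradiction: assuming $\lim_{x\to x_2}u(x)=0$, I will integrate the geodesic equation up to $x_2$ and show that one side is finite while the other diverges. First I would record the monotonicity consequences of the hypotheses. Since $u''>0$ on $(x_1,x_2)$, the derivative $u'$ is strictly increasing there, and since $u'<0$ it is bounded, so $u'$ has a finite limit $L\le 0$ as $x\to x_2$ and satisfies $|u'(x)|\le C_0:=|u'(x^\ast)|$ on any interval $[x^\ast,x_2)$. Because $u'<0$ the function $u$ is decreasing and positive, hence it has a limit $\ell\ge 0$; the goal is to rule out $\ell=0$.

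The key step is to rewrite the left-hand side of~(\ref{x_graph_SSEq}) as a total derivative. Since $u'=\tan\alpha$, we have $\frac{u''}{1+(u')^2}=\frac{d}{dx}\arctan(u')$, so~(\ref{x_graph_SSEq}) reads
\[
\frac{d}{dx}\arctan\big(u'(x)\big)=\frac{xu'}{2}-\frac{u}{2}+\frac{n-1}{u}.
\]
I would then integrate this identity over $(x^\ast,x_2)$ as an improper integral. The left-hand side converges, since $\arctan(u')$ has the finite limit $\arctan(L)$; on the right-hand side, the terms $\int_{x^\ast}^{x_2}\tfrac{xu'}{2}\,dx$ and $\int_{x^\ast}^{x_2}\tfrac{u}{2}\,dx$ are finite because their integrands are bounded on the finite interval (using $|x|\le\max(|x_1|,|x_2|)$, $|u'|\le C_0$, and $u\le u(x^\ast)$). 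Everything therefore hinges on the remaining term $\int_{x^\ast}^{x_2}\frac{n-1}{u}\,dx$.

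The main point, and the place where the assumption $\ell=0$ is used, is to show that this last integral diverges. Here the boundedness of $u'$ is decisive: from $u(x)=\ell+\int_x^{x_2}|u'|\,dt$ and $\ell=0$ we obtain the uniform upper bound $u(x)\le C_0(x_2-x)$, whence $\frac{n-1}{u(x)}\ge \frac{n-1}{C_0(x_2-x)}$ and $\int_{x^\ast}^{x_2}\frac{n-1}{u}\,dx=+\infty$ (recall $n\ge 2$, so $n-1>0$). This contradicts the finiteness of the left-hand side, so $\ell>0$.

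I expect the main obstacle to be the borderline case $L=0$, in which $u'\to 0$ and the expected linear decay $u\sim|L|(x_2-x)$ degenerates. The resolution is to notice that only an \emph{upper} bound on $u$ (equivalently a lower bound on $1/u$) is needed for the divergence of $\int 1/u$, and that this upper bound $u(x)\le C_0(x_2-x)$ follows from mere boundedness of $u'$ and holds uniformly whether $L<0$ or $L=0$. Recognizing the left-hand side of~(\ref{x_graph_SSEq}) as $\frac{d}{dx}\arctan(u')$—so that its integral is automatically finite—is what makes the contradiction transparent.
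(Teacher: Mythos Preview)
Your argument is correct. The contradiction via integrating $\frac{d}{dx}\arctan(u')$ and isolating the divergent term $\int \frac{n-1}{u}\,dx$ works exactly as you describe, and your handling of the borderline case $L=0$ is fine since only the upper bound $u(x)\le C_0(x_2-x)$ is needed.

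The paper takes a somewhat different route. It first passes to the inverse function $f=u^{-1}$, viewing the curve as a graph $(f(r),r)$ over the $r$-axis, and reformulates the claim as: a solution $f$ of~(\ref{r_graph_SSEq}) on $(r_1,r_2)$ with $f\le M$, $f'<0$, $f''>0$ must have $r_1>0$. It then sets $\alpha(r)=\pi/2-\arctan f'(r)$ and integrates $\frac{d}{dr}\log\cos\alpha(r)$, which after using~(\ref{r_graph_SSEq}) is bounded above by $\frac{r}{2}-\frac{n-1}{r}+\frac{M}{2(-f'(r_2))}$. Integrating from $r_1$ to $r_2$ produces an explicit positive lower bound for $r_1$ in terms of $r_2$, $M$, and $f'(r_2)$. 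So both proofs exploit the same mechanism---the singular $\frac{n-1}{u}$ (resp.\ $\frac{n-1}{r}$) term forces the curve away from the axis---but the paper extracts a quantitative bound while your argument is a cleaner qualitative contradiction that avoids the change of variables. Either is adequate here, since the quantitative bound is not used elsewhere.
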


Before we prove Lemma~\ref{r_graph_converge} and Lemma~\ref{x_graph_concave_below}, we prove some properties of solutions to~(\ref{x_graph_SSEq}) and~(\ref{r_graph_SSEq}).

\begin{lemma}
\label{r_graph:crossing:above}
There exists $M_1 > \sqrt{2(n-1)}$ with the following property: Let $f$ be a solution to~\emph{(\ref{r_graph_SSEq})} with $f(\sqrt{2(n-1)}) > 0$. Suppose $f'(r) \leq 0$ when $r \geq \sqrt{2(n-1)}$. Then $f(r) < 0$ whenever $r > M_1$ and $f(r)$ is defined.
\end{lemma}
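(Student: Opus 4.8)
The plan is to produce a \emph{uniform} constant $M_1$ by a Sturm-type comparison that bounds how far $f$ can remain nonnegative, independently of the (unconstrained) initial height $f(\sqrt{2(n-1)})$ and slope. Write $r_0 = \sqrt{2(n-1)}$ and $c(r) = \frac{r}{2} - \frac{n-1}{r}$, and note $c(r_0) = 0$ while $c(r) > 0$ for $r > r_0$. The first step is to extract a clean differential inequality from~(\ref{r_graph_SSEq}): on any interval where $f \ge 0$, the hypothesis gives $f' \le 0$, hence $c(r) f' \le 0$ for $r \ge r_0$, so the bracket $c(r) f' - \frac{f}{2}$ is $\le -\frac{f}{2} \le 0$; multiplying by $1 + (f')^2 \ge 1$ only makes this more negative, giving
\[
f''(r) \le -\tfrac12 f(r) \qquad \text{wherever } f(r) \ge 0 \text{ and } r \ge r_0.
\]

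Next I would compare $f$ with the solution $g(r) = \cos\bigl((r - r_0)/\sqrt{2}\bigr)$ of $g'' = -\tfrac12 g$, whose first zero is at $r_1 := r_0 + \pi/\sqrt2$. Set $M_1 := r_1 = \sqrt{2(n-1)} + \pi/\sqrt2 > \sqrt{2(n-1)}$. Suppose $f$ is defined on $[r_0, r_1]$ and, for contradiction, $f > 0$ on $[r_0, r_1)$. Consider the Wronskian $W = f g' - f' g$, for which $W' = f g'' - f'' g = -g\bigl(\tfrac12 f + f''\bigr) \ge 0$ on $[r_0, r_1)$, using $g > 0$ there together with the inequality above. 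Since $W(r_0) = -f'(r_0) \ge 0$, we get $W \ge 0$ throughout; evaluating at $r_1$, where $g(r_1)=0$ and $g'(r_1) = -1/\sqrt2 < 0$, yields $W(r_1) = f(r_1) g'(r_1) \ge 0$, forcing $f(r_1) \le 0$ --- a contradiction. Hence $f$ must have a first zero $r_* \in (r_0, r_1]$.

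Finally I would upgrade ``has a zero'' to ``stays strictly negative.'' At the first zero $r_*$ the hypothesis gives $f'(r_*) \le 0$; if $f'(r_*) = 0$ as well, uniqueness for~(\ref{r_graph_SSEq}) (whose coefficients are smooth for $r > 0$) would force $f \equiv 0$, contradicting $f(r_0) > 0$, so in fact $f'(r_*) < 0$. For $r > r_*$ the hypothesis $f' \le 0$ makes $f$ non-increasing, so $f(r) \le f(r_*) = 0$; moreover $f$ cannot vanish again, since a second zero combined with monotonicity would make $f$ identically zero on a subinterval and hence, by uniqueness, everywhere --- again contradicting $f(r_0)>0$. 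Therefore $f(r) < 0$ for all $r > r_*$, in particular for every $r > M_1 \ge r_*$ at which $f$ is defined; the conclusion is vacuous when the maximal domain of $f$ terminates before $M_1$.

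The main obstacle is securing an $M_1$ that is genuinely uniform over all admissible $f$: a direct integration from the data $(f(r_0), f'(r_0))$ fails because that data is unbounded. The Sturm--Wronskian comparison is exactly what removes dependence on the initial amplitude, since the sign analysis of $W$ uses only $f \ge 0$ and $f' \le 0$ and never the size of $f$. The remaining care is bookkeeping: applying the Wronskian argument only on the interval where $f$ is actually defined, and invoking ODE uniqueness to pass from a single zero crossing to strict negativity thereafter.
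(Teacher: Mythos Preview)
Your argument is correct. The route differs from the paper's: the paper uses the differentiated equation~(\ref{r_graph_SSEq_differentiated}) to show $f'''\le 0$ on the interval where $f>0$, deduces the \emph{constant} bound $f''(r)\le f''(r_0)\le -\tfrac12 f(r_0)$, and then integrates twice to get the quadratic envelope $f(r)\le f(r_0)\bigl[1-\tfrac14(r-r_0)^2\bigr]$, yielding $M_1=\sqrt{2(n-1)}+2$. You instead read off the sharper \emph{variable} bound $f''+\tfrac12 f\le 0$ directly from~(\ref{r_graph_SSEq}) (no third derivative needed) and run a Sturm--Wronskian comparison against $\cos\bigl((r-r_0)/\sqrt2\bigr)$, obtaining $M_1=\sqrt{2(n-1)}+\pi/\sqrt2$. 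Both approaches give an explicit universal constant; yours is slightly more self-contained since it avoids~(\ref{r_graph_SSEq_differentiated}), while the paper's third-derivative trick is reused elsewhere (e.g.\ Lemma~\ref{r_graph:crossing:below}). One cosmetic point: the step you phrase as ``a contradiction'' really just shows $f(r_1)\le 0$, which together with $f>0$ on $[r_0,r_1)$ gives a first zero $r_*\in(r_0,r_1]$ rather than a contradiction per se---but your subsequent argument handles this correctly.
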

\begin{proof}
Notice that $f''(r) < 0$ when $r \geq \sqrt{2(n-1)}$, $f(r)>0$, and $f'(r) \leq 0$. Also, $f'''(r) \leq 0$  when $r \geq \sqrt{2(n-1)}$, $f'(r) \leq 0$, and $f''(r) < 0$. The idea of the proof is to use this concave down behavior to force $f$ to be negative when $r$ is large enough. Choose $r > \sqrt{2(n-1)}$ so that $f>0$ on $[\sqrt{2(n-1)},r]$. Then $$f''(r) \leq f''(\sqrt{2(n-1)}) \leq \frac{f''(\sqrt{2(n-1)})}{1+ f'(\sqrt{2(n-1)})^2} = -\frac{1}{2}f(\sqrt{2(n-1)}),$$ where we have used $f''' \leq 0$ on $[\sqrt{2(n-1)},r]$, $f''(\sqrt{2(n-1)})<0$, and equation~(\ref{x_graph_SSEq}). Integrating twice from $\sqrt{2(n-1)}$ to $r$, we have $$f(r) \leq f(\sqrt{2(n-1)}) \left[ 1 - \frac{1}{4}(r - \sqrt{2(n-1)})^2 \right].$$ Choose $M_1 = 2 + \sqrt{2(n-1)}$. Then $f(r) > 0$ whenever $r > M_1$ and $f(r)$ is defined.
\end{proof}

Next, we prove a lemma about solutions to~(\ref{r_graph_SSEq}), which shows that a positive, increasing, concave down solution cannot be defined on an interval of the form $(m, \sqrt{2(n-1)}]$, for arbitrarily small $m$.
\begin{lemma}
\label{r_graph:crossing:below}
There exists $m_1>0$ with the following property: Let $f$ be a solution to~\emph{(\ref{r_graph_SSEq})} with $f(\sqrt{2(n-1)}) > 0$. Suppose $f'(r) > 0$ and $f''(r) < 0$ when $r<\sqrt{2(n-1)}$. Then $f(r) < 0$ whenever $r < m_1$ and $f(r)$ is defined.
\end{lemma}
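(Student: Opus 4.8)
The plan is to follow the template of Lemma~\ref{r_graph:crossing:above}: first use the sign of $f'''$ to obtain a uniform negative upper bound for $f''$ and a corresponding lower bound for $f'$, and then integrate. Writing $r_* = \sqrt{2(n-1)}$, the concavity estimate alone will give $f(r) \le f(r_*)\bigl[1 - \tfrac14 (r_* - r)^2\bigr]$, which already proves the lemma (with $m_1 = r_* - 2$) when $r_* > 2$. The difficulty is that for small $n$ (where $r_* \le 2$) this quadratic bound never becomes negative on $(0, r_*)$, so I will need a second, sharper estimate valid near the axis, where the singular coefficient $\tfrac{n-1}{r}$ in~(\ref{r_graph_SSEq}) dominates.

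For the first step, I would check using~(\ref{r_graph_SSEq_differentiated}) that $f''' > 0$ on the set $r < r_*$: with $f' > 0$, $f'' < 0$ and $\tfrac{r}{2} - \tfrac{n-1}{r} < 0$, all three terms on the right-hand side are positive. Hence $f''$ is increasing in $r$, so $f''(r) \le f''(r_*)$ for $r < r_*$; and evaluating~(\ref{r_graph_SSEq}) at $r_*$, where $\tfrac{r_*}{2} - \tfrac{n-1}{r_*} = 0$, gives $f''(r_*) \le \tfrac{f''(r_*)}{1 + f'(r_*)^2} = -\tfrac12 f(r_*)$. Integrating $f'' \le -\tfrac12 f(r_*)$ once yields $f'(r) \ge \tfrac12 f(r_*)(r_* - r)$. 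The point of this step, beyond the quadratic bound, is the scale-invariant consequence at the reference radius $r_1 := \sqrt{(n-1)/2} = r_*/2$: since $f(r_1) \le f(r_*)$ and $f'(r_1) \ge \tfrac{r_*}{4} f(r_*)$, we get the uniform ratio bound $f(r_1)/f'(r_1) \le 4/r_*$.

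The second step is the main new ingredient and the crux. On $(0, r_1]$ the bracket in~(\ref{r_graph_SSEq}) is negative, so dropping $-f/2 \le 0$ and using $1 + (f')^2 \ge 1$ gives $f''/f' \le \tfrac{r}{2} - \tfrac{n-1}{r}$ wherever $f > 0$. Integrating this from $r$ to $r_1$ produces a lower bound of the form $f'(r) \ge K (r_1/r)^{n-1}$, with $K = f'(r_1)e^{-r_1^2/4}$. For $n \ge 2$ the exponent $n-1 \ge 1$, so $\int_r^{r_1} f'(s)\,ds$ diverges as $r \to 0$; comparing $f(r) = f(r_1) - \int_r^{r_1} f'$ against the ratio bound $f(r_1)/f'(r_1) \le 4/r_*$ shows $f$ must vanish before a radius $m_1 > 0$ that depends only on $n$. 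The essential obstacle is precisely the uniformity of $m_1$ over all admissible $f$: a bound on $f'$ or on $f$ separately is solution-dependent (both degenerate as $f$ approaches the plane), and it is the scale-invariant ratio $f(r_1)/f'(r_1)$ from the first step that makes the crossing radius universal. I would then set $m_1$ to be the radius at which $\int_{m_1}^{r_1} s^{-(n-1)}\,ds$ equals the resulting universal constant, and conclude that any admissible $f$ defined at some $r < m_1$ must already be negative there.
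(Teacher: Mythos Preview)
Your proposal is correct and follows essentially the same two-step strategy as the paper's proof: first use $f'''>0$ to get $f''\le -\tfrac12 f(r_*)$ and hence a scale-invariant bound on $f/f'$ at a fixed reference radius, then use the singular term $-(n-1)/r$ in~(\ref{r_graph_SSEq}) to show $f'(r)\gtrsim r^{-(n-1)}$ and integrate to force $f$ negative before a universal $m_1$. The only differences are cosmetic: the paper takes the reference point $r=1$ (rather than your $r_1=r_*/2$) and in Step~2 drops the $\tfrac{r}{2}$ term to write $f''/f'\le -(n-1)/r$ directly, whereas you keep it and absorb the resulting $e^{-r_1^2/4}$ into the constant.
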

\begin{proof}
The idea of the proof is to use the $f' / r$ term to force $f$ to be negative when $r$ is small. We break the proof up into two steps.

Step 1: Estimate $f'$ in terms of $f$ at some point less than $\sqrt{2(n-1)}$. Without loss of generality, we assume that $f(1)$ is defined and positive. Using equation~(\ref{r_graph_SSEq_differentiated}), we see that $f'''(r) > 0$ when $r< \sqrt{2(n-1)}$ (since $f'(r) > 0$ and $f''(r) < 0$). Then, for $r < \sqrt{2(n-1)}$, we see that $f''(r) \leq f''(\sqrt{2(n-1)})$. Using equation~(\ref{x_graph_SSEq}) and the positivity of $f'$, we  have $f''(\sqrt{2(n-1)}) \leq -\frac{1}{2}f(\sqrt{2(n-1)}) \leq -\frac{1}{2}f(1)$. Therefore, $f''(r) \leq -\frac{1}{2}f(1)$. Integrating from $1$ to $\sqrt{2(n-1)}$, we arrive at the estimate $$f'(1) \geq \frac{\sqrt{2(n-1)} -1}{2}f(1).$$

Step 2: Estimate $f(r)$ for $r<1$. Suppose $f>0$ on $[r,1]$. Then using $f'>0$, $f''<0$, and~(\ref{x_graph_SSEq}), we have $$\frac{f''(r)}{f'(r)} \leq -\frac{n-1}{r}.$$ Integrating from $r$ to $1$, $$f'(r) \geq \frac{f'(1)}{r^{n-1}} \geq c_n \frac{f(1)}{r^{n-1}},$$ and integrating again $$f(r) \leq f(1) \left[ 1 - c_n \int_r^1 \frac{1}{t^{n-1}} dt \right].$$ Choose $m_1$ so that $\int_{m_1}^1 \frac{1}{t^{n-1}} dt \geq 1 / c_n$. Then $f(r) < 0$ whenever $r < m_1$ and $f(r)$ is defined.
\end{proof}

Now we prove Lemma~\ref{r_graph_converge} and Lemma~\ref{x_graph_concave_below}.
\begin{proof}[Proof of Lemma~\ref{r_graph_converge}]
Let $f_k(r)$ be a sequence of solutions to~(\ref{r_graph_SSEq}) defined on the neighborhood $|r - \sqrt{2(n-1)}| \leq \varepsilon$. Suppose $f_k(\sqrt{2(n-1)})$ is an increasing sequence that converges to $b < \infty$, and $f_k'(\sqrt{2(n-1)}) \to 0$. Let $f(r)$ denote the solution to~(\ref{r_graph_SSEq}) with $f(\sqrt{2(n-1)}) = b$ and $f'(\sqrt{2(n-1)})=0$. Then $f_k$ converges smoothly to $f$, and we note that $f''(\sqrt{2(n-1)})=-b/2$.

If $b\neq 0$, then $f''(\sqrt{2(n-1)}) \neq 0$, and for sufficiently large $k$, we have $f_k''(\sqrt{2(n-1)}) \neq 0$ so that $f_k$ cannot be written as a graph over the $x$-axis. If $b=0$, then $f \equiv 0$, and for sufficiently large $k$, we may assume that the domain of $f_k$ contains the interval $[m_1,M_1]$. Now, depending on the sign of $f_k'(\sqrt{2(n-1)})$, either $f_k'(r) \geq 0$ on $r \geq \sqrt{2(n-1)}$, or $f_k'(r) < 0$ and $f_k''(r)<0$ on $r \leq \sqrt{2(n-1)}$. Applying Lemma~\ref{r_graph:crossing:above} and Lemma~\ref{r_graph:crossing:below}, we conclude that $f_k$ crosses the $r$-axis for $k$ sufficiently large.
\end{proof}

\begin{proof}[Proof of Lemma~\ref{x_graph_concave_below}]
It is sufficient to show that a solution $f(r)$ to~(\ref{r_graph_SSEq}) defined on $(r_1,r_2)$ with $f \leq M$, $f' <0$, and $f''>0$ satisfies $r_1>0$. Let $\alpha(r) = \pi/2 - \arctan f'(r)$ so that $$\frac{d}{dr}( \log \cos \alpha(r) ) = \frac{r}{2} - \frac{n-1}{r} - \frac{f(r)}{2f'(r)} \leq \frac{r}{2} - \frac{n-1}{r} + \frac{M}{2(-f'(r_2))}.$$ Integrating from $r_1$ to $r_2$, $$\log \left( \frac{\cos \alpha(r_2)}{\cos \alpha(r_1)} \right) \leq \frac{(r_2)^2}{4} + (n-1) \log \left( \frac{r_1}{r_2}\right) + \frac{M r_2}{2(-f'(r_2))}.$$ Therefore, $$r_1 \geq r_2 \left[ -\cos \alpha(r_2) e^{-\frac{(r_2)^2}{4} + \frac{M r_2}{2f'(r_2)}} \right]^{1/(n-1)}.$$
\end{proof}

We make note of a result used during the proof of Proposition~\ref{generic_graph_behavior1}.
\begin{lemma}
\label{generic_graph_behavior1.5}
Let $u: (a, b) \rightarrow \mathbb{R}$ be a maximally extended non-degenerate solution to~\emph{(\ref{x_graph_SSEq})}. Then $u'$ and $u''$ do not vanish in a neighborhood of $b$ \emph{(or $a$)}. Moreover, $u'$ and $u''$ have the same sign \emph{(}have different signs\emph{)} in this neighborhood.
\end{lemma}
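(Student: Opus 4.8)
The plan is to prove the statement at the right endpoint $b$; the corresponding statement at $a$ then follows automatically from the reflection symmetry $x \mapsto -x$ of equation~(\ref{x_graph_SSEq}), under which $u(x)$ is carried to a solution $v(x) = u(-x)$ with $v'(x) = -u'(-x)$ and $v''(x) = u''(-x)$. A direct substitution shows $v$ again solves~(\ref{x_graph_SSEq}), and this reflection interchanges the two endpoints, preserves the sign of the second derivative, and flips the sign of the first. Hence ``same sign near $b$'' is exactly ``different signs near $a$,'' and it suffices to produce a neighborhood $(c,b)$ on which $u'$ and $u''$ are nonvanishing and of the same sign.

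First I would establish that $u''$ has a fixed, nonzero sign in some neighborhood of $b$. This is the oscillation estimate: if $u''$ vanished in every neighborhood of $b$, there would be an increasing sequence $x_k \to b$ of alternating maxima and minima. Since maxima of $u$ lie above the cylinder and minima below it, continuity of~(\ref{x_graph_SSEq}) at the cylinder solution gives $\varepsilon > 0$ with $|u(x_k) - \sqrt{2(n-1)}| > \varepsilon$ (otherwise $u$ could be continued past $b$). Reading $u$ as a graph over the $r$-axis on the band $|r - \sqrt{2(n-1)}| < \varepsilon$ then yields a sequence of solutions of~(\ref{r_graph_SSEq}) converging to the vertical geodesic $\Gamma[b,\sqrt{2(n-1)},\pi/2]$; by Lemma~\ref{r_graph_converge} this contradicts graphicality when $b \neq 0$ and maximality when $b = 0$. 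Hence $u''$ does not change sign near $b$, and after possibly relabelling I may assume $u'' > 0$ there, the concave-down case being handled by the parallel argument with signs reversed.

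With $u'' > 0$ on some $(b-\delta, b)$, the derivative $u'$ is strictly increasing, so $L := \lim_{x \to b} u'$ exists in $(-\infty, +\infty]$ and $u'$ vanishes at most once on $(b-\delta,b)$. I would then split into cases. If $b = \infty$, the classification of solutions defined on a ray in~\cite{KM} forces the non-degenerate $u$ to be a trumpet, so $u' > 0$ and $u'' > 0$ on $[0,\infty)$ and we are done. If $b < \infty$ and $L = +\infty$, then $u'$ is eventually positive, so on a smaller neighborhood $u' > 0$ and $u'' > 0$, as desired. The remaining possibility, $b < \infty$ with $L$ finite, I would rule out: then $u'$ is bounded, so $u$ has a finite limit at $b$; if $\lim_{x\to b} u > 0$ the solution extends smoothly past $b$, contradicting maximality, while if $\lim_{x\to b} u = 0$ then $u' < 0$ with $u'' > 0$ near $b$ and Lemma~\ref{x_graph_concave_below} forces $\lim_{x\to b} u > 0$, a contradiction. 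Thus $u' > 0$ and $u'' > 0$ near $b$, proving the claim when $u''>0$. (In the concave-down case the symmetric analysis yields $u' < 0$ and $u'' < 0$; the subcase $b=\infty$ is vacuous since trumpets are concave up, and for $L$ finite with $\lim u = 0$ one uses instead that $\tfrac{n-1}{u} \to +\infty$ forces $u'' > 0$ through~(\ref{x_graph_SSEq}), contradicting $u'' < 0$.)

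I expect the oscillation step to be the main obstacle, since it is where the global geometry of the geodesic enters: ruling out infinitely many extrema accumulating at $b$ requires the passage to the $r$-graph and the convergence analysis of Lemma~\ref{r_graph_converge}, rather than a purely local ODE argument. Once $u''$ is known to have a fixed sign, determining the sign of $u'$ is a comparatively routine combination of monotonicity, the extension criterion for maximal solutions, and Lemma~\ref{x_graph_concave_below}.
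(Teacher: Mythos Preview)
Your proof is correct and follows essentially the same approach as the paper: the oscillation argument to pin down the sign of $u''$ near $b$ is exactly the one from the proof of Proposition~\ref{generic_graph_behavior1} (which the paper simply cites), and your subsequent case split---$b=\infty$ via the trumpet classification in~\cite{KM}, $b<\infty$ with $|u'|\to\infty$, and ruling out $b<\infty$ with bounded $u'$ via the extension criterion and Lemma~\ref{x_graph_concave_below}---mirrors the paper's treatment of the three exit possibilities $\lim u = 0$, $0<\lim u<\infty$, and $\lim u = \infty$. One small structural remark: your oscillation paragraph implicitly uses $b<\infty$ (the phrase ``$u$ could be continued past $b$''), so strictly speaking the $b=\infty$ case should be disposed of first via~\cite{KM} before running that argument, but since you do handle $b=\infty$ separately this does not affect the validity.
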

\begin{proof}
The lemma is true when $b=\infty$ by Theorem 3 in~\cite{KM}. We assume that $b<\infty$. We also assume, from the proof of Proposition~\ref{generic_graph_behavior1}, that $u''$ does not vanish in a neighborhood of $b$. Using Lemma~\ref{x_graph_concave_below}, we know that $u'$ and $u''$ must have the same sign when $u$ exits through the $x$-axis at $b$. If $u$ exits through infinity (which does not happen when $b<\infty$), then given the previously described sinusoidal shape of $u$, we must have $u'(x), u''(x) \to \infty$ as $x \to b$. Finally, when $0 < \lim_{x\to b} u(x) < \infty$, it follows that $\lim_{x\to b}|u'(x)| =\infty$, and $u'$ and $u''$ have the same sign near $b$.
\end{proof}

Now that we've finished the proof of Proposition~\ref{generic_graph_behavior1}, we want to study the oscillatory behavior of solutions to~(\ref{x_graph_SSEq}). We begin by showing that a maximally extended solution $u:(a,b) \to \mathbb{R}$ to~(\ref{x_graph_SSEq}) cannot exit through infinity when $b$ is finite.
\begin{lemma}
\label{generic_graph_behavior2}
Let $u: (a, b) \rightarrow \mathbb{R}$ be a maximally extended solution to~\emph{(\ref{x_graph_SSEq})}. If $b$ \emph{(or $a$)} is finite, then $\lim_{x \to b} u(x) < \infty$ \emph{(or $\lim_{x \to a} u(x) < \infty$)}.
\end{lemma}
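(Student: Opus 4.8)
The plan is to argue by contradiction, exploiting the fact that~(\ref{x_graph_SSEq}) says precisely that $\frac{d}{dx}\arctan(u')$ equals the right-hand side. First I would reduce to the non-degenerate setting: the only straight-line (degenerate) graphical solution is the cylinder, which is defined on all of $\RR$, so $b<\infty$ forces $u$ to be non-degenerate. I may then invoke Proposition~\ref{generic_graph_behavior1} to conclude $a<0<b$, and Lemma~\ref{generic_graph_behavior1.5} to conclude that $u'$ and $u''$ have a common fixed sign in a left neighborhood of $b$. Now suppose, contrary to the claim, that $\lim_{x\to b}u(x)=+\infty$. Since $u$ blows up over a finite $x$-interval, $u'$ cannot stay bounded; as $u'$ has fixed sign and $u''$ the same sign, $u'$ is eventually positive and increasing, so $u'(x)\to+\infty$ and $\alpha(x):=\arctan u'(x)\to\pi/2$ as $x\to b^-$. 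I fix $x_0\in(0,b)$, which is possible because $b>0$, so that $x$ stays bounded away from $0$ on $[x_0,b)$.

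The engine of the argument is integration of~(\ref{x_graph_SSEq}) in the form $\alpha'(x)=\frac{xu'}{2}-\frac{u}{2}+\frac{n-1}{u}$. Because $\alpha$ has the finite limit $\pi/2$, the improper integral $\int_{x_0}^{b}\bigl(\frac{tu'}{2}-\frac{u}{2}+\frac{n-1}{u}\bigr)\,dt$ converges; and since $1/u\to 0$ is bounded on $[x_0,b)$, the integral of $h(t):=tu'-u$ over $[x_0,b)$ converges as well. The key observation is that $h(t)=t^{2}\,\frac{d}{dt}\!\left(u/t\right)$, so dividing by $t^{2}$ recovers $u/t$: one has $\frac{u(x)}{x}-\frac{u(x_0)}{x_0}=\int_{x_0}^{x}h(t)\,t^{-2}\,dt$. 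Thus the whole question reduces to showing that the right-hand side stays bounded as $x\to b^-$.

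For that last step, on $[x_0,b)$ the weight $t^{-2}$ is bounded and monotone while $\int_{x_0}^{b}h$ converges, so Dirichlet's test (equivalently, an integration by parts using that $t\mapsto\int_{x_0}^{t}h$ is bounded and $t^{-2}$ has bounded variation) shows $\int_{x_0}^{b}h(t)\,t^{-2}\,dt$ converges. Hence $u(x)/x$ tends to a finite limit as $x\to b^-$, which contradicts the fact that $u(x)\to+\infty$ while $x\to b>0$ forces $u(x)/x\to+\infty$. This rules out blow-up at $b$, and the symmetric computation at the left endpoint (where $x<0$ near $a$ and $\alpha\to-\pi/2$) rules it out at $a$. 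I expect the main obstacle to be exactly this final convergence: $\int h$ is only \emph{conditionally} convergent, so one cannot bound the integrand $h\,t^{-2}$ pointwise, and boundedness of $u/x$ must instead be extracted from the bounded variation of the weight $t^{-2}$ via an Abel/Dirichlet argument — which is precisely where keeping $x$ away from $0$, guaranteed by $b>0$ from Proposition~\ref{generic_graph_behavior1}, is essential.
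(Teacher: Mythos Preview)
Your argument is correct and takes a genuinely different route from the paper's proof. Both begin the same way, invoking Lemma~\ref{generic_graph_behavior1.5} to get $u',u''>0$ near $b$ and hence $u'\to+\infty$. From there the paper locates a point $x_1$ with $x_1u'(x_1)=u(x_1)$, derives the third-order inequality $\psi''\ge \tfrac12 x_1\psi'\psi^2$ for $\psi=u'$, and compares $\psi$ with the explicit barrier $\phi_\epsilon(x)=M/\sqrt{b-\epsilon-x}$ via a maximum principle; integrating the resulting bound $u'\le M/\sqrt{b-x}$ gives $u(b)<\infty$. Your approach is instead purely integral: since $\arctan u'$ has a finite limit, $\int_{x_0}^b\alpha'$ converges, and since $(n-1)/u$ is bounded on $[x_0,b)$ the same holds for $\int_{x_0}^b h$ with $h=xu'-u$; writing $h=t^2(u/t)'$ and applying Abel's test (your ``Dirichlet's test'' is really Abel's test here, since $t^{-2}\to 1/b^2\neq 0$, but the integration-by-parts justification you give is exactly right) yields a finite limit for $u(x)/x$, contradicting $u(x)/x\to+\infty$.

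Your argument is shorter and more elementary---it avoids the search for $x_1$, the third-derivative computation, and the barrier comparison---while the paper's approach gives the quantitative bound $u'\lesssim (b-x)^{-1/2}$ as a byproduct, though this is not used elsewhere. One small point: you should make explicit that $x_0$ can be chosen in $(0,b)$ so that $1/u$ is bounded on $[x_0,b)$; this is immediate since $u>0$ is continuous on $[x_0,b)$ and $u\to\infty$ at $b$, but it deserves a word.
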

\begin{proof}
We know from Lemma~\ref{generic_graph_behavior1.5} that the $u'(x)$ and $u''(x)$ have the same sign (and do not vanish) as $x$ approaches $b$. When $u'$ and $u''$ are both negative near $b$, so that $u$ is decreasing, the lemma holds. When $u'$ and $u''$ are both positive near $b$, we will show that $\lim_{x\to b }u(x) < \infty$. To see this, suppose to the contrary that $\lim_{x\to b} u(x) = \infty$. Then (by the sinusoidal shape of $u$) there is a point $x_1>0$ for which $u(x_1) = x_1 u'(x_1)$. We consider the function $\Psi(x) = xu' -u$ (from Lemma 1 in~\cite{KM}). If $x > 0$, then $\Psi' = xu'' > \frac{1}{2}x(1+(u')^2)\Psi$, and hence $\Psi(x)>0$ when $x>x_1$. Therefore, $u'(x)>0$ and $u''(x)>0$ for $x>x_1$. We note that $u''/(1+(u')^2) \geq (n-1)/u$ when $x>x_1$.

We will use a third derivative argument to show $u(b) < \infty$. Let $\psi = u'$. By the previous discussion, we have $\psi > 0$ and $\psi' > 0$ on $(x_1,b)$, and $\psi(x_1) = \frac{u(x_1)}{x_1}$. Using equation~(\ref{x_graph_SSEq_differentiated}), for $x>x_1$, we have $$\psi'' \geq \frac{1}{2}x_1 \psi' \psi^2,$$ where we also used $u''/(1+(u')^2) \geq (n-1)/u$ when $x>x_1$.

Now, for small $\varepsilon > 0$, consider the function $$\phi_{\varepsilon}(x) = \frac{M}{\sqrt{b - \varepsilon -x}}.$$ We choose $M>0$ so that $\frac{u(x_1)}{x_1} \leq \frac{M}{\sqrt{b - x_1}}$ and $\frac{3}{M^2} \leq x_1$. Then $$\phi_{\varepsilon}'' \leq \frac{1}{2} x_1 \phi_{\varepsilon}' \phi_{\varepsilon}^2,$$ and $\phi_\varepsilon(x_1) > \psi(x_1)$. Suppose $\phi_\varepsilon -\psi$ is negative at some point in $(x_1, b - \varepsilon)$. Since $\phi_\varepsilon(x_1) > \psi(x_1)$ and $\phi_\varepsilon(b - \varepsilon) = \infty$, we know that $\phi_\varepsilon -\psi$ achieves a negative minimum at some point $x_0 \in (x_1, b - \varepsilon)$. Computing $(\phi_\varepsilon -\psi)''$ at $x_0$, we arrive at a contradiction: $$0 \leq (\phi_\varepsilon -\psi)''(x_0) \leq \frac{1}{2} x_1 \phi_{\varepsilon}' (\phi_{\varepsilon}^2 - \psi^2)(x_0) < 0.$$ Therefore, $\psi \leq \phi_{\varepsilon}$. Taking $\varepsilon \to 0$ and integrating we see that $u$ is bounded from above at $b$.
\end{proof}

Next, we show that a solution to~(\ref{x_graph_SSEq}) must be convex when it perpendicularly intersects the $r$-axis below the cylinder.

\begin{lemma}
\label{shooting_horizontal_below_cylinder_is_degree_zero}
Let $u: [0, b) \to \mathbb{R}$ be a solution to~\emph{(\ref{x_graph_SSEq})} satisfying $$u(0) < \sqrt{2(n-1)}, \quad u'(0) = 0.$$ Then $u(x)$ is strictly convex on $[0, b_t)$.
\end{lemma}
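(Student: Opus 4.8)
The plan is to first pin down the behavior that is forced directly by~(\ref{x_graph_SSEq}), then reduce strict convexity on all of $[0,b)$ to a single sign condition, and finally rule out an inflection point lying above the cylinder.

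First I would record the elementary consequences of the hypotheses. Evaluating~(\ref{x_graph_SSEq}) at $x=0$ gives $u''(0)=\frac{n-1}{u(0)}-\frac{u(0)}{2}>0$ because $u(0)<\sqrt{2(n-1)}$, so $u$ is strictly convex at the left endpoint. More robustly, reading off~(\ref{x_graph_SSEq}) shows that whenever $x>0$, $u'\ge 0$, and $u\le\sqrt{2(n-1)}$, both $\tfrac{xu'}{2}$ and $\tfrac{n-1}{u}-\tfrac{u}{2}$ are nonnegative (the second strictly, unless $u$ is the cylinder), so $u''>0$. Hence $u$ is strictly convex with $u'$ strictly increasing at least up to the first point where the graph meets the cylinder; moreover $u$ \emph{must} cross the cylinder, since a convex increasing solution trapped below $\sqrt{2(n-1)}$ would have $u'$ increasing to a positive limit and therefore be unbounded. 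Thus the only danger is an inflection occurring above the cylinder.

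Next I would reduce to a sign condition using $\Psi=xu'-u$ (as in Lemma~1 of~\cite{KM}). Since $\Psi'=xu''$ and, by~(\ref{x_graph_SSEq}), $u''=\tfrac12(1+(u')^2)\bigl(\Psi+\tfrac{2(n-1)}{u}\bigr)$, one gets $\Psi'\ge \tfrac12 x(1+(u')^2)\Psi$ for $x>0$. Consequently, if $\Psi\ge 0$ at a single point, then $\Psi>0$ thereafter (Gronwall), whence $u''=\tfrac12(1+(u')^2)\bigl(\Psi+\tfrac{2(n-1)}{u}\bigr)>0$ for all larger $x$. It therefore suffices to prove that the curve cannot develop an inflection while $\Psi<0$. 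Equivalently, writing $W=\Psi+\tfrac{2(n-1)}{u}$, so that $u''>0\iff W>0$ and $W(0)=\tfrac{2(n-1)}{u(0)}-u(0)>0$, I must show $W>0$ on $[0,b)$; note that at a first zero $x_0$ of $W$ one necessarily has $\Psi(x_0)=-\tfrac{2(n-1)}{u(x_0)}<0$ and $u(x_0)>\sqrt{2(n-1)}$.

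The hard part is excluding this first inflection $x_0$, and it is where I expect the real work to lie. Here I would bring in three further facts. First, $W$ solves the linear equation $W'=\tfrac12 x(1+(u')^2)W-\tfrac{2(n-1)u'}{u^2}$, whose integrating factor $e^{-G}$ with $G(x)=\int_0^x \tfrac12 t(1+(u')^2)\,dt$ gives, after a telescoping estimate that uses monotonicity of $u$, the bound $We^{-G}>\tfrac{2(n-1)}{u}-u(0)$; this already confines any inflection to heights $u(x_0)\ge \tfrac{2(n-1)}{u(0)}$. Second, since $u'(0)=0$ the solution is \emph{not} one of the trumpets of~\cite{KM} and so is not defined on $[0,\infty)$; hence $b<\infty$ and $u$ is bounded near $b$ by Lemma~\ref{generic_graph_behavior2}. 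An inflection above the cylinder would then force (using that the zeros of $u''$ are separated by zeros of $u'$, together with Lemma~\ref{generic_graph_behavior1.5}) a local maximum at some height $R>\sqrt{2(n-1)}$ followed by a concave, decreasing branch with $u'<0$. Third, on that descending branch the function $f$ obtained by writing the curve as a graph $(f(r),r)$ satisfies~(\ref{r_graph_SSEq}) with $f>0$ and $f'<0$ for $r\in[\sqrt{2(n-1)},R]$, so Lemma~\ref{r_graph:crossing:above} forces $R\le M_1$. The crux is to reconcile this upper bound on the bump height with the lower bound $R\ge \tfrac{2(n-1)}{u(0)}$ (and, for $u(0)$ near $\sqrt{2(n-1)}$, to supplement it, most likely via Lemma~\ref{r_graph:crossing:below} or a third-derivative barrier argument in the spirit of Lemma~\ref{generic_graph_behavior2}) so as to eliminate the bump outright; this incompatibility of the competing height bounds is the step I expect to be delicate and where the heart of the proof resides.
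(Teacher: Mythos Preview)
Your outline has a genuine gap precisely where you flag it. The two height bounds you assemble—$u(x_0)\ge 2(n-1)/u(0)$ from the integrating-factor estimate and $R\le M_1$ from Lemma~\ref{r_graph:crossing:above}—are incompatible only when $u(0)<2(n-1)/M_1$, so they handle at best small initial heights. For $u(0)$ near $\sqrt{2(n-1)}$ the solution shadows the cylinder over an arbitrarily long interval, and both the first inflection height and the maximum height can sit arbitrarily close to $\sqrt{2(n-1)}$; no estimate that depends only on $u(0)$ will separate them, and the barrier or Lemma~\ref{r_graph:crossing:below} patches you gesture at face the same obstruction. (Incidentally, the claim that $u$ \emph{must} cross the cylinder is not correct—Lemma~\ref{r_graph:crossing:below} shows it does not when $u(0)<m_1$—though in that regime strict convexity is immediate, so this slip is harmless.)

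The paper sidesteps the difficulty by abandoning the fixed-$u(0)$ approach and instead running a continuity argument in the shooting parameter $t=u(0)$. Lemma~\ref{r_graph:crossing:below} gives strict convexity for all $t<m_1$ (since $u_t$ then never reaches the cylinder). Letting $t_0$ be the first height at which strict convexity fails, continuous dependence forces $u_{t_0}$ to be weakly convex; but a non-degenerate solution with $t_0<\sqrt{2(n-1)}$ that is not strictly convex must display the sinusoidal shape with a genuine local maximum, which is incompatible with convexity. Hence $t_0=\sqrt{2(n-1)}$. Your $\Psi$ and $W$ functions are not used; the hard height comparison is replaced by the soft observation that the first failure along a one-parameter family must be borderline, and borderline here means degenerate.
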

\begin{proof}
Let $u_t:(a_t,b_t) \to \mathbb{R}$ be the maximally extended solution to~(\ref{x_graph_SSEq}) satisfying $u_t(0)=t$, $u_t'(0)=0$. When $t<\sqrt{2(n-1)}$, we have $u''(0) >0$, and if $u_t$ is not strictly convex, then $u_t$ has a sinusoidal shape and obtains a local maximum at a first point $y_t >0$. In particular, there is a first point $z_t > 0$ such that $$u_t (z_t) = \sqrt{2(n - 1)}.$$ Examining equation (\ref{x_graph_SSEq}), we conclude that $u_t$ is a strictly convex function on $[0, z_t]$. Applying Lemma~\ref{r_graph:crossing:below} to $u_t$ written as a graph over the $r$-axis, we see that $z_t$ cannot exist when $t < m_1$, and therefore $u_t$ is strictly convex for small $t>0$.

We will use continuity to show that $u_t$ is strictly convex for all $0<t< \sqrt{2(n-1)}$. Let $t_0>0$ be the first initial height for which $u_{t_0}$ is not strictly convex. By continuity, we know that $u_{t_0}$ is convex (otherwise, $u_t$ would not be convex for some  $t<t_0$), and thus $u_{t_0}$ does not have a sinusoidal shape with a local maximum at some first point. Therefore, we must have $t_0 \geq \sqrt{2(n-1)}$ (and hence $t_0 = \sqrt{2(n-1)}$), which completes the proof of the lemma.
\end{proof}

Using the continuity of geodesics, we can prove a more general version of the previous lemma. By considering the family of shooting problems: $u_t(t) = r_0$, $u_t'(t) = 0$, for $t \in [ 0, x_0]$, where $r_0 < \sqrt{2(n-1)}$ and $x_0>0$, we can show that the solution $u_t:[t,b_t) \to \mathbb{R}$ to~(\ref{x_graph_SSEq}) is strictly convex on $[t,b_t)$. For the continuity argument to work we assume $u_t$ is maximally extended at $b_t$ and use the facts that $b_t < \infty$ and $\lim_{x \to b_t} u_t(x) < \infty$. We have the following result.

\begin{lemma}
\label{monotonicity_after_minimum}
Let $u: [x_0, b) \to \mathbb{R}$ be a solution to~\emph{(\ref{x_graph_SSEq})} with 
\[
u(x_0) < \sqrt{2(n - 1)}, \quad u'(x_0) = 0,
\]
where $x_0 > 0$. Then $u(x)$ is strictly convex on $[x_0, b)$.
\end{lemma}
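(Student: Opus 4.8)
The plan is to imitate the proof of Lemma~\ref{shooting_horizontal_below_cylinder_is_degree_zero}, replacing its one-parameter family of initial \emph{heights} by a one-parameter family of horizontal-tangent \emph{positions}. Set $r_0=u(x_0)<\sqrt{2(n-1)}$ and, for each $t\in[0,x_0]$, let $u_t$ be the maximally extended solution to~(\ref{x_graph_SSEq}) with $u_t(t)=r_0$ and $u_t'(t)=0$; the curve in the statement is $u_{x_0}$, and the goal is to show every $u_t$ is strictly convex on its domain. The base case $t=0$ is exactly Lemma~\ref{shooting_horizontal_below_cylinder_is_degree_zero}, so $u_0$ is strictly convex. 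Note also that at its horizontal point equation~(\ref{x_graph_SSEq}) gives $u_t''(t)=(n-1)/r_0-r_0/2>0$, so each $u_t$ leaves $(t,r_0)$ increasing and concave up. A direct attempt — writing $u_{x_0}$ as a graph $(f(r),r)$ over the $r$-axis on its initial convex stretch and invoking Lemma~\ref{r_graph:crossing:below} — only yields a contradiction when $r_0<m_1$ (there $f(r_0)=x_0>0$ is incompatible with $f<0$ for $r<m_1$); this is precisely why one transports convexity from $t=0$ by continuity in $t$ rather than arguing directly.

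The engine of the argument is the structural dichotomy recorded before Proposition~\ref{generic_graph_behavior1}. Since $r_0\neq\sqrt{2(n-1)}$, no $u_t$ is the cylinder, so by~(\ref{x_graph_SSEq_differentiated}) the quantities $u_t'$ and $u_t'''$ have opposite signs wherever $u_t''=0$, and $u_t'$ and $u_t''$ never vanish together. Hence every zero of $u_t''$ at which $u_t'\neq0$ is a simple sign change, and a \emph{convex} non-cylinder solution must in fact satisfy $u_t''>0$ throughout, i.e.\ it is automatically strictly convex. Equivalently, a convex $u_t$ can have no interior local maximum.

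With these ingredients I would run the continuity argument in $t$. Let $t_0$ be the first position in $[0,x_0]$ for which $u_{t_0}$ fails to be strictly convex; since $u_0$ is strictly convex, $t_0>0$ and $u_t$ is strictly convex for all $t<t_0$. By continuous dependence on initial data (the continuity of geodesics from Section~\ref{prelim:defn}), $u_{t_0}$ is a smooth limit of the strictly convex $u_t$ and therefore satisfies $u_{t_0}''\geq0$, that is, $u_{t_0}$ is convex. By the dichotomy above, a convex non-cylinder solution is strictly convex, contradicting the choice of $t_0$. Thus no such $t_0\le x_0$ exists, so $u_{x_0}$ is strictly convex, which is the assertion of the lemma.

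The main obstacle is making the continuity step genuinely rigorous, because the maximal domains $[t,b_t)$ move with $t$ and a $C^2$-limit of graphs need not remain graphical out to its endpoint. The needed control is provided by the facts highlighted before the statement. By Proposition~\ref{generic_graph_behavior1} each $u_t''$ has only finitely many zeros, so both ``the first position at which convexity fails'' and, for fixed $t$, the first curvature zero are well defined; moreover a non-convex $u_t$ attains a local maximum above the cylinder at a finite $x$, and one may use that its right endpoint satisfies $b_t<\infty$ with $\lim_{x\to b_t}u_t(x)<\infty$ (Lemma~\ref{generic_graph_behavior2}). These bounds confine any possible failure of convexity to a fixed compact region on which $u_t\to u_{t_0}$ holds in $C^2$, which is what legitimizes passing $u_t''>0$ to $u_{t_0}''\geq0$ in the limit and also supplies the openness needed to conclude that the failure set is empty. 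Establishing this localization is the only delicate point; the remainder is the soft open/closed bookkeeping already present in Lemma~\ref{shooting_horizontal_below_cylinder_is_degree_zero}.
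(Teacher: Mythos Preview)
Your proposal is correct and follows essentially the same route as the paper: the paper proves Lemma~\ref{monotonicity_after_minimum} precisely by sliding the horizontal-tangent position via the family $u_t(t)=r_0$, $u_t'(t)=0$ for $t\in[0,x_0]$, using Lemma~\ref{shooting_horizontal_below_cylinder_is_degree_zero} as the base case and invoking continuity together with $b_t<\infty$ and $\lim_{x\to b_t}u_t(x)<\infty$. You have in fact spelled out the mechanism (the convex $\Rightarrow$ strictly convex dichotomy and the compactness needed for the limit step) more explicitly than the paper does.
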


The next lemma shows that solutions which intersect the $r$-axis below the cylinder with negative slope are convex in the first quadrant.
\begin{lemma}
\label{convexity_of_sub_horizontal_shooting}
Let $u: [0, b) \to \mathbb{R}$ be a solution to~\emph{(\ref{x_graph_SSEq})} satisfying
\[
u(0) < \sqrt{2(n - 1)}, \quad u'(0) < 0.
\]
Then $u$ is strictly convex on $[0, b)$.
\end{lemma}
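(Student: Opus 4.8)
The plan is to show that $u'' > 0$ everywhere on $[0,b)$ by ruling out a first zero of $u''$, relying on the differentiated equation~(\ref{x_graph_SSEq_differentiated}) and on the already-established Lemma~\ref{monotonicity_after_minimum}. First I would record that the initial data forces the solution to start strictly convex: evaluating~(\ref{x_graph_SSEq}) at $x=0$ gives $u''(0)/(1+u'(0)^2) = -u(0)/2 + (n-1)/u(0)$, which is positive precisely because $u(0) < \sqrt{2(n-1)}$. Since also $u'(0) < 0$, the solution begins convex and decreasing.

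Next I would argue by contradiction. Suppose $u$ is not strictly convex and let $x_1 \in (0,b)$ be the first point with $u''(x_1) = 0$. Then $u'' > 0$ on $[0,x_1)$, so $u'$ is strictly increasing on $[0,x_1]$, starting from $u'(0) < 0$. Because $u''$ descends from positive values to $0$ at $x_1$, we must have $u'''(x_1) \le 0$. On the other hand, substituting $u''(x_1) = 0$ into~(\ref{x_graph_SSEq_differentiated}) yields $u'''(x_1)/(1+u'(x_1)^2) = -(n-1)u'(x_1)/u(x_1)^2$, the relation (noted earlier in the text) that $u'$ and $u'''$ have opposite signs at zeros of $u''$; here $u(x_1) > 0$ since $u$ is a graph in $\mathbb{H}$, so the division is legitimate.

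The proof then splits on the sign of $u'(x_1)$. If $u'(x_1) < 0$, the displayed identity gives $u'''(x_1) > 0$, contradicting $u'''(x_1) \le 0$. If $u'(x_1) = 0$, then $u'$ and $u''$ vanish simultaneously, forcing $u$ to be the cylinder, which is impossible since $u(0) < \sqrt{2(n-1)}$. The remaining case is $u'(x_1) > 0$: since $u'$ is strictly increasing from $u'(0) < 0$, it crosses zero at a unique $x_* \in (0,x_1)$, where $u$ is convex and decreasing on $[0,x_*]$, so $u(x_*) < u(0) < \sqrt{2(n-1)}$ and $u'(x_*) = 0$ with $x_* > 0$. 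This is exactly the hypothesis of Lemma~\ref{monotonicity_after_minimum}, which then forces $u$ to be strictly convex on $[x_*,b)$; in particular $u''(x_1) > 0$, contradicting $u''(x_1) = 0$. As every case is contradictory, no first zero of $u''$ exists, so $u'' > 0$ on $[0,b)$.

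I expect the main obstacle to be the case $u'(x_1) > 0$: the third-derivative sign argument does not close it, and one must instead recognize that the solution has already developed an interior minimum below the cylinder and reduce to the previously proven Lemma~\ref{monotonicity_after_minimum}. The only technical care needed elsewhere is verifying positivity of $u$ on $[0,b)$ before dividing by $u(x_1)^2$, which is immediate from $u$ being a graph over the $x$-axis in $\mathbb{H}$.
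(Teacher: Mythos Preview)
Your proof is correct and follows essentially the same strategy as the paper: both show strict convexity up to the first interior minimum (via the sign relation between $u'$ and $u'''$ at zeros of $u''$) and then invoke Lemma~\ref{monotonicity_after_minimum}. The paper's version first assumes $u$ is maximally extended and appeals to Lemma~\ref{generic_graph_behavior1.5} to guarantee that a minimum actually occurs before $b$, whereas your contradiction argument on a hypothetical first zero of $u''$ reaches the same conclusion without needing that extra step.
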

\begin{proof}
We assume that $u$ is maximally extended at $b$. Since $u(0) < \sqrt{2(n - 1)}$ and $u'(0) < 0$, we know that $u$ remains strictly convex until it reaches its first minimum, and we also know that $b<\infty$. It then follows (from Lemma~\ref{generic_graph_behavior1.5}) that $u$ has a minimum somewhere on $[0, b)$. Appealing to Lemma \ref{monotonicity_after_minimum} applied to the first minimum on $[0,b)$ proves the lemma.
\end{proof}

Slightly adapting the proof of Lemma~\ref{convexity_of_sub_horizontal_shooting} we can show that solutions to~(\ref{x_graph_SSEq}) which intersect the $r$-axis between the cylinder and the sphere with negative slope are degree 1 curves in the first quadrant.
\begin{lemma}
\label{almost_convexity_of_sub_horizontal_shooting}
Let $u: [0, b) \to \mathbb{R}$ be a solution to~\emph{(\ref{x_graph_SSEq})}, maximally extended at $b$, satisfying
\[
\sqrt{2(n-1)} \leq u(0) \leq \sqrt{2n}, \quad u'(0) < 0.
\]
Then there is a point $x_0 \in [0,b)$ so that $u''(x) \leq 0$ for $x \in [0,x_0]$, and $u''(x)>0$ for $x \in (x_0,b)$. Furthermore, there is a point $x_1 > x_0$ for which $u'(x_1) =0$.
\end{lemma}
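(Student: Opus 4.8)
The plan is to adapt the proof of Lemma~\ref{convexity_of_sub_horizontal_shooting}, accounting for the fact that $u$ now begins concave down rather than convex. Throughout I note that $u$ is non-degenerate and $u'(0)<0$, so $u$ is neither a trumpet nor the cylinder; by the classification of solutions on $(a,\infty)$ quoted from~\cite{KM} it follows that $b<\infty$.

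First I would record the initial shape. Evaluating~(\ref{x_graph_SSEq}) at $x=0$ and using $u(0)\ge\sqrt{2(n-1)}$ gives $u''(0)=\tfrac{1+u'(0)^2}{2u(0)}\bigl(2(n-1)-u(0)^2\bigr)\le 0$. More is true: as long as $u'<0$ and $u>\sqrt{2(n-1)}$, both terms $\tfrac{xu'}{2}$ and $\tfrac{n-1}{u}-\tfrac{u}{2}$ on the right of~(\ref{x_graph_SSEq}) are $\le 0$, so $u''<0$, and hence $u$ is strictly concave down and strictly decreasing on this range. Since a concave down, decreasing graph cannot persist at a fixed height, $u$ reaches the cylinder level at some finite $x_c>0$, with $u(x_c)=\sqrt{2(n-1)}$, $u'(x_c)<0$, and (again from~(\ref{x_graph_SSEq})) $u''(x_c)<0$.

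Next I would pin down the inflection structure. On any interval where $u'<0$, equation~(\ref{x_graph_SSEq_differentiated}) shows that at a zero of $u''$ one has $u'''>0$; thus every zero of $u''$ on $\{u'<0\}$ is a transversal crossing from $-$ to $+$, so $u''$ vanishes at most once there and, once positive, stays positive while $u'<0$. Moreover at such a zero $x_0$ equation~(\ref{x_graph_SSEq}) forces $\tfrac{x_0u'(x_0)}{2}=\tfrac{u(x_0)}{2}-\tfrac{n-1}{u(x_0)}<0$, i.e. $u(x_0)<\sqrt{2(n-1)}$, so the inflection lies strictly below the cylinder. Granting for the moment that this inflection $x_0$ exists, the curve is convex and still decreasing just past $x_0$; being convex it reaches a first minimum at some $x_1>x_0$ with $u(x_1)>0$ (exactly as in Lemma~\ref{convexity_of_sub_horizontal_shooting}, using $b<\infty$ together with Lemma~\ref{generic_graph_behavior1.5}). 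Applying Lemma~\ref{monotonicity_after_minimum} at $x_1$ then gives $u''>0$ on all of $[x_1,b)$, so $u''\le 0$ on $[0,x_0]$ and $u''>0$ on $(x_0,b)$, with $x_1>x_0$, as required.

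The substance of the lemma is therefore to show that the inflection exists — equivalently, that the decreasing, concave down curve must turn around rather than escape the strip while still decreasing. Suppose instead $u''<0$ on all of $(0,b)$. Then $u$ is decreasing and concave down throughout, and since $b<\infty$ and $u$ is maximally extended, it can terminate at $b$ only by exiting through the $x$-axis ($u\to 0$) or by developing a vertical tangent at a positive height. The first case I can rule out directly: if $u\to 0$ with $u''<0$, then~(\ref{x_graph_SSEq}) forces $u'\to-\infty$ (the $\tfrac{n-1}{u}$ and $\tfrac{xu'}{2}$ terms must cancel, as they do along the sphere), so the geodesic meets the axis perpendicularly and hence equals $Q[b]$; our segment is then $\Lambda[0](Q[b])$, whose $r$-axis data are described by Proposition~\ref{prop:q_sphere_intersect}, and each of the cases $b<\sqrt{2n}$, $b=\sqrt{2n}$, $b>\sqrt{2n}$ contradicts $u(0)\le\sqrt{2n}$ or $u'(0)<0$. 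The remaining possibility — that $u$ stays concave down out to an interior vertical tangent at height $L\in(0,\sqrt{2(n-1)})$ — is the main obstacle, and is exactly where the hypothesis $u(0)\le\sqrt{2n}$ must enter. Here I would argue by comparison with the sphere $\mathcal{S}$: since $u(0)\le\sqrt{2n}=\mathcal{S}(0)$ and $u'(0)<0=\mathcal{S}'(0)$, the curve lies below $\mathcal{S}$ and is more convex (the right-hand side of~(\ref{x_graph_SSEq}) is decreasing in $u$), so it cannot remain concave down to a vertical tangent at positive height while $\mathcal{S}$ itself runs concave down all the way to the axis; this is the comparison analysis for~(\ref{r_graph_SSEq}) carried out in Appendix A and already invoked in Proposition~\ref{prop:q_sphere_intersect}. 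Once both escape modes are excluded, $u''$ must vanish, the inflection $x_0$ exists, and the structure established above completes the proof.
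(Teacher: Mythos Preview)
Your structural analysis is sound: the transversality of zeros of $u''$ on $\{u'<0\}$ via~(\ref{x_graph_SSEq_differentiated}), the location of the inflection below the cylinder, and the application of Lemma~\ref{monotonicity_after_minimum} once a minimum is found all work as you describe. Your Case~1 (exit through the $x$-axis) is also correctly dispatched via Proposition~\ref{prop:q_sphere_intersect}, and this is essentially the same use the paper makes of that proposition.

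The gap is Case~2, the vertical tangent at a positive height $L$. Your appeal to Appendix~A is not valid: the comparison lemmas there are proved for solutions $f,g$ of~(\ref{r_graph_SSEq}) with $f'(0)=g'(0)=0$, i.e.\ for curves meeting the $x$-axis perpendicularly, and the arguments (computing $v=f/g$ and $w=f'/g'$ and their derivatives at $r=0$) genuinely use this. Your curve in Case~2 satisfies $f'(L)=0$ for $L>0$, not $f'(0)=0$, so neither Lemma~\ref{cp:lemma:1} nor Lemma~\ref{cp:lemma:2} applies. The heuristic that $u$ ``lies below $\mathcal S$ and is more convex'' is not established either: the right-hand side of~(\ref{x_graph_SSEq}) depends on both $u$ and $u'$, so monotonicity in $u$ alone does not give a comparison principle, and you have not shown that $u$ remains below $\mathcal S$. (A smaller point: your claim that $u$ must reach the cylinder level before going vertical is also unjustified---a concave-down decreasing function on a finite interval can certainly develop a vertical tangent above any prescribed level---though this is not essential to the argument.)

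The paper avoids confronting Case~2 directly by running a continuity argument in the family $u_t$ with $u_t(0)=t$, $u_t'(0)=u'(0)$, for $t\in(0,u(0)]$. For $t<\sqrt{2(n-1)}$ the minimum exists by Lemma~\ref{convexity_of_sub_horizontal_shooting}; one then tracks the minimum point $p_t=(x_1^t,u_t(x_1^t))$ as $t$ increases and shows it cannot escape the open first quadrant. The only substantive escape route is through the $x$-axis, and that is exactly what Proposition~\ref{prop:q_sphere_intersect} rules out (your Case~1). The vertical-tangent scenario never has to be excluded for a single fixed solution, because continuity in $t$ keeps the minimum in a compact set. This deformation argument is the missing idea.
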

\begin{proof}
We consider the following family of shooting problems: For $t \in (0, u(0)]$, let $u_t:[0,b_t) \to \mathbb{R}$ be the solution to~(\ref{x_graph_SSEq}) with $u_t(0) = t$ and $u_t'(0) = u'(0)$, for $t \in (0, u(0)]$. We assume that $u_t$ is maximally extended at $b_t$, and we will use the facts that $b_t < \infty$ and $\lim_{x \to b_t} u_t(x) < \infty$, which follow from Theorem 3 in~\cite{KM} and Lemma~\ref{generic_graph_behavior2}.

When $t<\sqrt{2(n-1)}$, we know (from the proof of Lemma~\ref{convexity_of_sub_horizontal_shooting}) that $u_t$ has a unique local minimum at a point $x_1^t \in (0, b_t)$ (the uniqueness follows from Lemma~\ref{monotonicity_after_minimum}). We claim that this property is true for all $t \leq u(0)$. Suppose to the contrary that $u_{t_*}$ does not have a local minimum in $(0,b_{t_*})$ for some first $t_* \leq u(0)$. Then, as $t$ increases to $t_*$, the points $p_t=(x_1^t, u_t(x_1^t))$ must exit the first quadrant of the upper-half plane. Since $u_t'(0)=u'(0)<0$ and $u_t(0) \leq u(0)$, we know that the points $p_t$ are bounded away from the $r$-axis. By continuity, since $b_{t_*} < \infty$, we know that $x_1^t$ is bounded when $t$ is less than and close to $t_*$. We also know that $u_t(x_1^t) \leq u(0)$ when $t<t_*$, and it follows that the points $p_t$ cannot exit the first quadrant through infinity. Finally, since $u_{t_*}(0) \leq \sqrt{2n}$ and $u_{t_*}'(0)<0$ we know from Proposition~\ref{prop:q_sphere_intersect} that the graph of $u_{t_*}$ is not a quarter sphere and by continuity the points $p_t$ are bounded away from the $x$-axis. Therefore, the points $p_t$ cannot exit the first quadrant, which is a contradiction. We conclude that $u_t$ has a unique local minimum at a point $x_1^t \in (0, b_t)$ for all $t \leq u(0)$.

Now, we can describe the behavior of $u_t$ when $\sqrt{2(n-1)} \leq t \leq u(0)$. We know that $u_t$ has a local minimum at $x_1^t$, and applying Lemma~\ref{monotonicity_after_minimum} we have $u_t''(x) > 0$ for $x \geq x_1^t$. Since $u_t'(0) <0$, it follows from the sinusoidal shape of $u_t$ that $u_t' =0$ at exactly one point. Using $u_t''(0) \leq 0$ and $u_t''(x_1^t) > 0$, we see that $u_t''(x_0^t)=0$ at some first point $x_0^t \in [0,x_1^t)$. Again appealing to the sinusoidal shape of $u_t$, we conclude that $u_t''(x) > 0$ for $x \in (x_0^t, x_1^t)$, which finishes the proof of the lemma.
\end{proof}

Now, we can show that a solution to~(\ref{x_graph_SSEq}) does not oscillate too much. We state the result in terms of the degree.

\begin{proposition}
\label{graphical_geodesic_degree_bound}
Let $\Lambda \in \underline{\Lambda}$ be a maximally extended geodesic segment. Then 
\[
deg(\Lambda ) \leq 2.
\]
Moreover, the only maximally extended geodesic segments with degree $2$ are type $(2,+)$.
\end{proposition}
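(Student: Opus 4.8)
The plan is to convert the statement into a bound on the number of critical points of $u$, and then to pin that number down using the one-sided convexity lemmas together with the reflection symmetry $x\mapsto -x$ of~(\ref{x_graph_SSEq}) (the equation is invariant under $x\mapsto -x$, $u'\mapsto -u'$). Write $\Lambda$ as a maximally extended non-degenerate solution $u:(a,b)\to\RR$, recall that the zeros of its Euclidean curvature are exactly the zeros of $u''$, and let $m$ denote the number of critical points of $u$; by Proposition~\ref{generic_graph_behavior1} we have $m<\infty$ and $a<0<b$. Since $u'$ and $u''$ cannot vanish together and the zeros of $u''$ are separated by zeros of $u'$, between two consecutive critical points there is \emph{at most} one inflection point; because consecutive critical points alternate between maxima (where $u''<0$) and minima (where $u''>0$), $u''$ changes sign across each such pair, so there is \emph{exactly} one inflection between consecutive critical points.

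I would next show that the two ``tails'' — the pieces of $(a,b)$ lying beyond the extreme critical points — carry no inflection points, so that $deg(\Lambda)=m-1$. On the piece $(c_m,b)$ to the right of the last critical point $c_m$, $u'$ has a fixed sign and, by Lemma~\ref{generic_graph_behavior1.5}, $u''$ has the \emph{same} sign as $u'$ near $b$; but at any inflection the relation~(\ref{x_graph_SSEq_differentiated}) forces $\sgn(u''')=-\sgn(u')$, so $u''$ could only cross zero in one direction there, which is incompatible with its sign just after $c_m$ agreeing with its sign near $b$. Hence there is no inflection on $(c_m,b)$; the analogous computation on $(a,c_1)$, now using that $u'$ and $u''$ have \emph{opposite} signs near $a$, rules out inflections there as well. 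The same dichotomy shows $m\ge 1$, since a monotone $u$ would force $u''$ to have opposite signs near $a$ and equal signs near $b$ while admitting only one crossing direction, a contradiction. Thus $deg(\Lambda)=m-1$, and it remains to prove $m\le 3$, with the rightmost critical point a minimum when $m=3$.

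To bound $m$ I would analyze the halves $[0,b)$ and $(a,0]$ separately. On $[0,b)$, Lemma~\ref{monotonicity_after_minimum} and Lemma~\ref{shooting_horizontal_below_cylinder_is_degree_zero} show that as soon as $u$ attains a (necessarily sub-cylinder) minimum at a point $x_0\ge 0$ it is strictly convex, hence strictly increasing, on $(x_0,b)$; combined with the alternation of extrema this forces the critical points in $[0,b)$ to appear in one of the patterns $\varnothing$, $(\max)$, $(\min)$, or $(\max,\min)$ read left to right. Applying the reflection to Lemma~\ref{convexity_of_sub_horizontal_shooting} and the others gives, on $(a,0]$, the mirror list $\varnothing$, $(\max)$, $(\min)$, $(\min,\max)$. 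A critical point \emph{at} the origin is handled directly: a minimum there makes $u$ convex on both sides, so $m=1$, while a maximum there permits at most one sub-cylinder minimum on each side, giving the pattern $(\min,\max,\min)$ and $m\le 3$. When $u'(0)\neq 0$, the only way to reach $m=4$ is to glue $(\min,\max)$ on the left to $(\max,\min)$ on the right, which places two maxima in consecutive global order with no minimum between them — impossible, since between two maxima $u$ must pass through a minimum. Therefore $m\le 3$ and $deg(\Lambda)\le 2$.

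Finally, for the type statement, suppose $m=3$; the alternating pattern is either $(\min,\max,\min)$ or $(\max,\min,\max)$. I would rule out $(\max,\min,\max)$ by distributing its three extrema across the two halves: a $(2,1)$ split forces the left half into the disallowed order $(\max,\min)$, a $(1,2)$ split symmetrically forces the right half into the disallowed order $(\min,\max)$, and the origin case only ever produces $(\min,\max,\min)$. Hence $m=3$ forces the pattern $(\min,\max,\min)$, whose rightmost critical point is a minimum; then $u'>0$ and, by Lemma~\ref{generic_graph_behavior1.5}, $u''>0$ near $b$, i.e. $\Lambda$ is of type $(2,+)$. The main obstacle throughout is the bookkeeping in the third paragraph: correctly deriving the admissible one-sided patterns from the convexity lemmas, tracking how the reflection reverses the left-to-right order (and hence the max/min roles) on the left half, and disposing of the degenerate case of a critical point lying exactly on the $r$-axis.
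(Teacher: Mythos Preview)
Your proof is correct and follows essentially the same approach as the paper: both arguments use Lemmas~\ref{shooting_horizontal_below_cylinder_is_degree_zero} and~\ref{monotonicity_after_minimum} (together with the $x\mapsto -x$ symmetry) to show that $u$ is strictly convex past any minimum in each half-line, which bounds the number of extrema by three and forces the pattern $(\min,\max,\min)$ when equality holds. Your write-up is more explicit than the paper's about the tail analysis establishing $deg(\Lambda)=m-1$ (the paper appeals to the ``sinusoidal shape'' and Lemma~\ref{generic_graph_behavior1.5} more implicitly), but the underlying mechanism is identical.
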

\begin{proof}
Let $\Lambda \in \underline{\Lambda}$ be a maximally extended geodesic segment. Given the sinusoidal shape of $\Lambda$, we know that $\Lambda$ alternates between maxima and minima, and its Euclidean curvature vanishes extactly once between any successive maximum and minimum. Now, it follows from Lemma~\ref{shooting_horizontal_below_cylinder_is_degree_zero} and Lemma~\ref{monotonicity_after_minimum} that $\Lambda$ remains convex after a minimum in the first quadrant (including the $r$-axis), and a similar statement holds in the second quadrant. In particualr, $\Lambda$ can have at most two minima (one in each quadrant). Also, $\Lambda$ can have at most one maximum; otherwise $\Lambda$ would oscillate `after' a minimum, which cannot occur. It follows that $deg(\Lambda) \leq 2$. Moreover, $deg(\Lambda) = 2$ if and only if $\Lambda$ has two minima, in which case $\Lambda$ is type $(2,+)$.
\end{proof}

The next propostion shows that the quarter spheres and trumpets account for all the half-entire graphs, and it classifies them into their different types.

\begin{proposition} 
\label{half_entire_graph_types}
The space $\underline{H}$ of half-entire graphs is the union of the sets $\underline{I}$, $\underline{O}$, and $\underline{T}$. Moreover, the elements of $\underline{I}^+$ are type $(0,-)$, the elements of $\underline{O}^+$ are type $(1,-)$, and the elements of $\underline{T}^+$ are type $(0,+)$.
\end{proposition}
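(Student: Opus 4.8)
The plan is to split the proof into two independent parts: first establishing the set equality $\underline{H} = \underline{I} \cup \underline{O} \cup \underline{T}$, and then reading off the types of the curves in the first quadrant from the convexity results already proved.

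For the set equality, I would argue by the exit behavior of a half-entire graph $\Lambda \in \underline{H}$, written as a maximally extended solution $u:(a,b) \to \RR$ of~(\ref{x_graph_SSEq}). By Lemma~\ref{generic_graph_behavior2}, if an endpoint is finite then $u$ stays bounded there, so each end of $\Lambda$ either exits through infinity or exits perpendicularly through the $x$-axis. If some end exits through infinity, then $u$ is defined on a half-line, and by Theorem 3 in~\cite{KM} together with non-degeneracy, $\Lambda$ is a trumpet, so $\Lambda \in \underline{T}$. Otherwise $\Lambda$ meets the $x$-axis perpendicularly at some point $(x_0,0)$, so by uniqueness of the degenerate initial value problem $\Lambda = \Lambda[0](Q[x_0])$. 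Proposition~\ref{prop:q_sphere_intersect} then locates the $r$-axis crossing relative to the sphere: $x_0 = \sqrt{2n}$ gives $\mathcal{S}$, $0 < x_0 < \sqrt{2n}$ (or its reflection) gives an element of $\underline{I}$, and $|x_0| > \sqrt{2n}$ gives an element of $\underline{O}$. Since the reverse inclusions hold by construction, this yields the decomposition.

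To determine the types I would first fix the sign of $u''$ near the right endpoint and then compute the degree. Near its right end a trumpet satisfies $u''>0$ since it is asymptotic to a ray, while an inner- or outer-quarter sphere in the first quadrant meets the $x$-axis with $u \to 0$ and $u' \to -\infty$, which forces $u''<0$; thus quarter spheres are of type $(\cdot,-)$ and trumpets of type $(\cdot,+)$. For the degree, the trumpet is convex on $[0,\infty)$ by~\cite{KM}, reaches a minimum below the cylinder when extended to the left, and is therefore strictly convex on all of $(a,\infty)$ by Lemma~\ref{monotonicity_after_minimum} (and its reflection); hence it has degree $0$ and is type $(0,+)$. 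For the inner-quarter sphere I would show it is concave throughout: since its right end has $u''<0$, Proposition~\ref{graphical_geodesic_degree_bound} leaves only degrees $0$ and $1$, and a degree-$1$ curve would, by the endpoint sign relations of Lemma~\ref{generic_graph_behavior1.5} and the sinusoidal shape, possess an interior local minimum lying below the cylinder; Lemma~\ref{monotonicity_after_minimum} would then make $u$ convex near its right endpoint, contradicting $u''<0$ there. So the inner-quarter sphere has degree $0$ and is type $(0,-)$. For the outer-quarter sphere one shows the degree is exactly $1$: it has a single maximum above the cylinder, and the convexity of the monotone pieces below the cylinder in each quadrant, via Lemma~\ref{convexity_of_sub_horizontal_shooting} together with the reflection symmetry $x \mapsto -x$ of~(\ref{x_graph_SSEq}), confines the one curvature zero to a neighborhood of that maximum, giving type $(1,-)$.

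The main obstacle is the degree computation for the quarter spheres, and in particular for the inner-quarter sphere, whose $r$-axis height $r_I > \sqrt{2n}$ lies outside the range covered by the convexity lemmas~\ref{shooting_horizontal_below_cylinder_is_degree_zero}--\ref{almost_convexity_of_sub_horizontal_shooting}. Here one cannot invoke a convexity lemma directly and must instead argue by contradiction as above, carefully tracking the signs of $u'$ and $u''$ at both ends. For the outer case the parallel difficulty is that the sign of $u''$ at the $r$-axis depends on whether $r_O$ lies above or below $\sqrt{2(n-1)}$, so the location of the unique curvature zero (first versus second quadrant) must be settled by a short case analysis, in the subcritical range invoking Lemma~\ref{almost_convexity_of_sub_horizontal_shooting} after reflection.
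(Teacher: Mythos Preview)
Your treatment of the set equality $\underline{H}=\underline{I}\cup\underline{O}\cup\underline{T}$, of the trumpets, and of the outer-quarter spheres is essentially the paper's argument (the paper also reduces the $\underline{O}^+$ case to Lemmas~\ref{convexity_of_sub_horizontal_shooting} and~\ref{almost_convexity_of_sub_horizontal_shooting} via the reflection $x\mapsto -x$). The gap is in your inner-quarter sphere step.

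For $I\in\underline{I}^+$, write $I$ as the graph of $u$ on $(a,b)$ with $b=x_0\in(0,\sqrt{2n})$, $u(b)=0$, $u'(b^-)=-\infty$, so indeed $u''<0$ near $b$. You then assert that a degree-$1$ curve of this kind must have an interior local minimum. That is not forced. By Lemma~\ref{generic_graph_behavior1.5}, near $a$ one has $u'<0$ and $u''>0$ whenever the degree is $1$, and nothing prevents $u'<0$ on all of $(a,b)$: a monotone decreasing profile with a single inflection at some $c\in(a,0)$, convex on $(a,c)$ and concave on $(c,b)$, is perfectly consistent with the sinusoidal picture (zeros of $u''$ are separated by zeros of $u'$, and here $u'$ has none). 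Note also that $u''(0)<0$ since $u(0)=r_I>\sqrt{2n}$, so the inflection, if it exists, sits in the second quadrant; Lemma~\ref{monotonicity_after_minimum} gives you nothing there. Even in the variant where a minimum does appear, it may lie at some $x_1<0$, and the reflected Lemma~\ref{monotonicity_after_minimum} only yields convexity on $(a,x_1]$, which does not contradict $u''<0$ near $b$. So your contradiction does not close.

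The paper handles $\underline{I}^+$ differently and does not attempt a direct degree computation from the convexity lemmas (which, as you noticed, do not reach $r_I>\sqrt{2n}$). It imports from \cite{D} that $\Lambda[0](Q[x_0])$ is type $(0,-)$ for small $x_0>0$, and then argues by continuity in $x_0$ up to $\sqrt{2n}$. If you want a self-contained fix, you would need either a concavity lemma valid for initial heights above $\sqrt{2n}$, or to reproduce the small-$x_0$ analysis and run the continuity argument.
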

\begin{proof}
We know that an element of $\underline{\Lambda}$ that exits the upper-half plane through infinity must be a trumpet. We also know that a half-entire graph that exits through the $x$-axis must do so perpendicularly (use equation~(\ref{x_graph_SSEq}) and Lemma~\ref{generic_graph_behavior1.5}). Therefore, the space $\underline{H}$ of half-entire graphs is the union of the sets $\underline{I}$, $\underline{O}$, and $\underline{T}$.

Now we address the types of the half-entire graphs in the first quadrant. Let $Q[x_0]$ denote the geodesic satisfying $Q [x_0] (0)  = (x_0, 0)$ and $\dot{Q}[x_0] (0) = (0, 1)$. It was shown in Proposition 4.12 of~\cite{D} that $\Lambda[0](Q [x_0])$ is type $(0,-)$ for small $x_0$. Arguing by continuity, we see that $\Lambda[0](Q [x_0])$ is type $(0,-)$ for $0< x_0< \sqrt{2n}$. Therefore, the curves in $\underline{I}^+$ are type $(0,-)$. When $x_0 > \sqrt{2n}$, we know that $\Lambda[0](Q[x_0])$ intersects the $r$-axis below the sphere with positive slope, and it follows from Lemma~\ref{convexity_of_sub_horizontal_shooting} and Lemma~\ref{almost_convexity_of_sub_horizontal_shooting} that $\Lambda[0](Q[x_0])$ is type $(1,-)$. Therefore, the curves in $\underline{O}^+$ are type $(1,-)$. Finally, it follows from, Lemma~\ref{monotonicity_after_minimum} that the curves in $\underline{T}^+$ are type $(0, +)$ since $r_T < \sqrt{2(n - 1)}$ and $\alpha_T > 0$ for $T \in \underline{T}^+$.
\end{proof}


\subsection{Applications of the Gauss-Bonnet formula}
\label{prelim:gauss_bonnet}

In this section we use the Gauss-Bonnet formula~(\ref{gauss_bonnet}) to prove some convergence results for geodesics. Applying the Gauss-Bonnet formula to a region with a piecewise geodesic boundary shows that the region cannot enclose a `large' area. In addition, if the region is near the $x$-axis, then it must enclose a `small' area. Using this heuristic, we show that a family of geodesics converging to a half-entire graph will converge to the half-entire graph as it leaves and returns to the upper-half plane (see Proposition~\ref{double_convergence_to_boundary}). We also use the Gauss-Bonnet formula to describe the boundaries of the sets $\underline{\Lambda}(k,\pm)$.

We begin by showing that a geodesic cannot interpolate between two different half-entire graphs in the first quadrant.
\begin{lemma}
\label{lemma:double_converge:interpolate}
Let $\Gamma_i \in \underline{\Gamma}$ be a sequence of geodesics with at least $2$ graphical components. Let $u_i = \Lambda[0](\Gamma_i)$ and $v_i = \Lambda[1](\Gamma_i)$, and suppose the sequences $u_i$ and $v_i$ converge to the half-entire graphs $u_\infty$ and $v_\infty$. Then $u_\infty = v_\infty$. The conclusion also holds when $u_\infty$ is the cylinder.
\end{lemma}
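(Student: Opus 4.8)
The plan is to track the single point where $u_i$ and $v_i$ meet and to show that, wherever this point escapes to on the boundary of $\mathbb{H}$, the two limits are forced to coincide: by uniqueness of the perpendicular initial value problem $Q[\cdot]$ when the point escapes through the $x$-axis, and by a Gauss--Bonnet area estimate when it escapes through infinity. First I would set up the geometry of the decomposition. Since $\Gamma_i$ has at least two graphical components, $u_i = \Lambda[0](\Gamma_i)$ is maximally extended at a finite right endpoint $b_i$ with $q_i := (b_i, u_i(b_i)) \in \mathbb{H}$; this is the point where $\Gamma_i$ has a vertical tangent and $v_i = \Lambda[1](\Gamma_i)$ begins. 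Locally the geodesic opens to the left of the line $x = b_i$, so $q_i$ is the common right endpoint of $u_i$ and $v_i$, which are the lower and upper branches meeting \emph{smoothly} at $q_i$. Because $u_\infty$ and $v_\infty$ are half-entire, they exit $\mathbb{H}$ through the $x$-axis or through infinity (Proposition~\ref{half_entire_graph_types}), so the right endpoints of $u_i$ and $v_i$ cannot remain in the interior. Passing to a subsequence, $q_i$ converges in $\overline{\mathbb{H}}$ to a single boundary point $q_\infty$, which must be simultaneously the exit point of $u_\infty$ (from $u_i \to u_\infty$) and of $v_\infty$ (from $v_i \to v_\infty$).

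Suppose first that $q_\infty = (x_*, 0)$ lies on the $x$-axis. Then both $u_\infty$ and $v_\infty$ are half-entire graphs whose closures meet the $x$-axis at $(x_*, 0)$, and by the proof of Proposition~\ref{half_entire_graph_types} each exits there perpendicularly. The perpendicular initial value problem at $(x_*,0)$ has a unique solution $Q[x_*]$, so the only graphical segment with $(x_*,0)$ as an endpoint is $\Lambda[0](Q[x_*])$; hence $u_\infty = v_\infty = \Lambda[0](Q[x_*])$, which settles this case.

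Suppose instead that $q_\infty$ lies at infinity, so that $b_i \to \infty$ and each of $u_\infty, v_\infty$ extends to $x = +\infty$ and is therefore a trumpet or the cylinder. Here I would run a Gauss--Bonnet argument. Each of $u_i, v_i \in \underline{\Lambda}$ crosses the $r$-axis exactly once (Proposition~\ref{generic_graph_behavior1}), and the $r$-axis is itself a geodesic (the plane $\mathcal{P}$). I would form a simple region $R_i$ bounded by a sub-arc of $u_i$, a sub-arc of $v_i$, and a segment of the $r$-axis --- a geodesic triangle, or, if $u_i$ and $v_i$ cross before $q_i$, the geodesic bigon cut off between their last crossing and $q_i$. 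In either case the boundary has at most three vertices, and the external angle at $q_i$ is $0$ since $\Gamma_i$ passes smoothly through its vertical tangent there. The Gauss--Bonnet formula~(\ref{gauss_bonnet}) then bounds the weighted area, $\int_{R_i}(1 + (n-1)/r^2)\,dx\,dr = 2\pi - \sum_j \theta_j < 4\pi$, a constant independent of $i$. On the other hand, if $u_\infty \neq v_\infty$, then by the uniqueness of the trumpet asymptotic to a given ray (Theorem 3 in~\cite{KM}) the two limits are asymptotic to distinct rays, or one of them is the cylinder; in either situation their vertical separation grows linearly, so the Euclidean area of $R_i$ --- and hence its weighted area, since the integrand exceeds $1$ --- diverges as $b_i \to \infty$. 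This contradiction forces $u_\infty = v_\infty$, and the same estimate shows that if $u_\infty$ is the cylinder then $v_\infty$ cannot separate from it, giving $v_\infty = u_\infty$ as well.

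The main obstacle is organizing the Gauss--Bonnet region so that it is genuinely simple while keeping the vertex count bounded: one must verify that $u_i$ and $v_i$ meet tangentially at $q_i$ with zero external angle, and choose the closing arc (a segment of the $r$-axis, or a sub-arc cut at the last crossing of $u_i$ and $v_i$) so that the enclosed region both captures the diverging separation of the two trumpets and has at most three corners. Everything else reduces either to uniqueness of an initial value problem or to the elementary fact that the area integrand is bounded below by $1$.
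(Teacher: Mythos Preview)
Your argument rests on a step that is essentially the content of the lemma. You assert that the subsequential limit $q_\infty$ of the turning points $q_i=(b_i,u_i(b_i))$ ``must be simultaneously the exit point of $u_\infty$ and of $v_\infty$,'' but convergence $u_i\to u_\infty$ in the topology on $\underline{\Lambda}$ is convergence of the initial data on the $r$-axis, i.e.\ smooth convergence on compact subsets of $\mathbb{H}$. That gives only the easy inequality $\liminf b_i \ge p$, where $(p,0)$ is the exit of $u_\infty$; it does \emph{not} force $b_i\to p$. Concretely, when $u_\infty$ is a quarter sphere, $u_i$ typically descends close to $(p,0)$, attains a small minimum, and then rises convexly (Lemma~\ref{monotonicity_after_minimum}); the vertical tangent at $b_i$ can occur well to the right of $p$. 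The paper's own description of this picture is that $\Gamma_i$ tracks $u_\infty$ down near $p$, then runs along the $x$-axis toward the exit $q$ of $v_\infty$, then rises along $v_\infty$---so $q_i$ lands somewhere in that low corridor, not at $p$. Your Case~1 therefore assumes what it has to prove. The same gap appears in your Case~2 setup: from $b_i\to\infty$ you conclude that $u_\infty$ is defined on $[0,\infty)$, but again the implication only runs the other way.

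The paper avoids locating $q_i$ altogether and uses Gauss--Bonnet in every case, splitting instead on the \emph{types} of $u_\infty$ and $v_\infty$. When both are quarter spheres with distinct exits $p\ne q$, the region bounded by $\Gamma_i$ and the $r$-axis contains a thin rectangle $[x_0,x_0+\delta]\times[\varepsilon/2,\varepsilon]$ near the $x$-axis, and the singular weight $(n-1)/r^2$ makes $\int_R (1+(n-1)/r^2)\,dx\,dr \ge (n-1)\delta/\varepsilon$ blow up as $\varepsilon\to 0$, violating~(\ref{gauss_bonnet}). Your Gauss--Bonnet region in Case~2 is the right idea for the trumpet/cylinder situation and is close to the paper's second case; the missing ingredient is to run a weighted Gauss--Bonnet argument near $r=0$ (not just Euclidean area) to handle the quarter-sphere and mixed cases as well, instead of appealing to uniqueness of $Q[x_*]$.
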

\begin{proof}
First, suppose $u_\infty$ and $v_\infty$ are both quarter spheres. Let $p$ and $q$ denote the right end points of $u_\infty$ and $v_\infty$, respectively. If $u_\infty \neq v_\infty$, then $p \neq q$, and there exists $\delta >0$ so that $|p-q| > 2 \delta$. Then for small $\varepsilon >0$, we claim there exists a rectangle $R$ of the form: $x_0 \leq x \leq x_0 + \delta$, $\varepsilon/2 \leq r \leq \varepsilon$ so that, for large $i$, the rectangle $R$ is contained in a simple region bounded by $\Gamma_i$ and the $r$-axis. To see this, we assume without loss of generality that the $x$-coordinate of $p$ is less than the $x$-coordinate of $q$. Given the sinusoidal shapes of $u_i$ and $v_i$, we know from Lemma~\ref{monotonicity_after_minimum} that $\Lambda[0](\Gamma_i)$ is type $(k_0,+)$ and hence $\Lambda[1](\Gamma_i)$ is type $(k_1,-)$, for some $k_0$ and $k_1$. Using the continuity of the differential equation~(\ref{x_graph_SSEq}), we see that $\Gamma_i$ follows along $u_\infty$ (getting arbitrarily close to $p$), then follows the $x$-axis (getting arbitrarily close to $q$), and then travels back to the $r$-axis along $v_\infty$. This proves the claim. To conclude the proof of the lemma in this case, we observe that $\int_R r^{-2}dxdr = \delta / \varepsilon$, and an application of the Gauss-Bonnet formula~(\ref{gauss_bonnet}) shows that this is impossible when $\varepsilon$ is small.

Second, suppose $u_\infty$ and $v_\infty$ are both trumpets. Then there exist rays $r_\sigma(x) = \sigma x$ and $r_\tau(x) = \tau x$ so that $u_\infty$ and $v_\infty$ are asymptotic to $r_\sigma$ and $r_\tau$, respectively. If $u_\infty \neq v_\infty$, then $\sigma \neq \tau$. Now, the wedge between $r_\sigma$ and $r_\tau$ has infinite area, and the same is true for the area of the wedge outside any compact set. Arguing as in the first case and using the property that the trumpets are asymptotic to the rays, we can show there is a simple region bounded by $\Gamma_i$ (and the $r$-axis) that encloses arbitrarily large area as $i \to \infty$. An application of the Gauss-Bonnet formula shows that this is impossible. The proof is similar when one of the trumpets is the cylinder.

Finally, suppose $u_\infty$ is a quarter sphere and $v_\infty$ is a trumpet or a cylinder. It follows from the sinusoidal shape of $u_i$ and Lemma~\ref{monotonicity_after_minimum} that $\Lambda[0](\Gamma_i)$ is type $(k_0,+)$ for some $k_0$. Then, arguing as in the previous cases, we can show there is a simple region bounded by $\Gamma_i$ and the $r$-axis that encloses arbitrarily large area as $i \to \infty$, and and an application of the Gauss-Bonnet formula shows that this is impossible.
\end{proof}

Next, we prove a lemma that describes the shape of a solution $u(x)$ to~(\ref{x_graph_SSEq}) when $u(0)$ is small or large.
\begin{lemma}
\label{lemma:double_converge:bounded}
Let $m_1$ and $M_1$ be the constants defined in \emph{Lemma~\ref{r_graph:crossing:below}} and \emph{Lemma~\ref{r_graph:crossing:above}}. If $u:(a,b) \to \mathbb{R}$ is a maximally extended solution to~\emph{(\ref{x_graph_SSEq})}, then
\begin{itemize}
\item[\emph{1.}]
If $u(0) < m_1$, then the graph of $u$ is in $\underline{\Lambda}(0,+)$.
\item[\emph{2.}]
If $u(0) > M_1$, then the graph of $u$ is in $\underline{\Lambda}(0,-)$.
\end{itemize}
Moreover, $a, b \to 0$ as $u(0) \to 0$ or $u(0) \to \infty$.
\end{lemma}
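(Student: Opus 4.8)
The plan is to establish parts 1 and 2 by showing that the sign of $u''$ cannot change, and to obtain the ``moreover'' from the fact that the geodesic curvature in~(\ref{SSEq}) becomes so large near the $x$-axis (resp.\ at large height) that the tangent angle sweeps its full range over a vanishingly short $x$-interval. First I would reduce to one side: equation~(\ref{x_graph_SSEq}) is invariant under $x\mapsto -x$ (sending $u(x)$ to $u(-x)$), so it suffices to prove convexity (resp.\ concavity) on $[0,b)$ and then apply the conclusion to the reflected solution to cover $(a,0]$. Evaluating~(\ref{x_graph_SSEq}) at $x=0$ gives $u''(0)/(1+u'(0)^2)=(n-1)/u(0)-u(0)/2$, and since $m_1<\sqrt{2(n-1)}<M_1$, the solution in part 1 starts strictly convex while the solution in part 2 starts strictly concave.

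For part 1, suppose toward a contradiction that $u''$ vanishes on $[0,b)$, and let $\xi>0$ be its first zero, so $u''>0$ on $[0,\xi)$. Differentiating~(\ref{x_graph_SSEq}) shows, as noted in Section~\ref{prelim:x_graph}, that $u'$ and $u'''$ have opposite signs at any zero of $u''$; since $u''$ decreases to $0$ at $\xi$ we have $u'''(\xi)\le 0$, hence $u'(\xi)>0$, and then~(\ref{x_graph_SSEq}) at $\xi$ forces $u(\xi)>\sqrt{2(n-1)}$. Thus the increasing, convex branch terminating at $\xi$ climbs from its lowest point---an interior minimum in $(0,\xi)$ if $u'(0)<0$, and the point $x=0$ otherwise---whose height is at most $u(0)<m_1$ and whose $x$-coordinate is nonnegative, up through the level $\sqrt{2(n-1)}$. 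Written as a graph $x=f(r)$ over the $r$-axis, this branch has $f'>0$, $f''<0$ for $r<\sqrt{2(n-1)}$ and $f(\sqrt{2(n-1)})>0$, so Lemma~\ref{r_graph:crossing:below} gives $f(r)<0$ for every $r<m_1$ at which $f$ is defined; evaluating at the height of the lowest point, where $f\ge 0$, is a contradiction. Hence $u''>0$ throughout and the graph lies in $\underline{\Lambda}(0,+)$.

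Part 2 is the mirror argument with Lemma~\ref{r_graph:crossing:above}. If $\xi>0$ is the first zero of $u''$ with $u''<0$ on $[0,\xi)$, then $u'''(\xi)\ge 0$ forces $u'(\xi)<0$ and hence, by~(\ref{x_graph_SSEq}), $u(\xi)<\sqrt{2(n-1)}$. The decreasing, concave branch terminating at $\xi$ descends from its highest point---an interior maximum in $(0,\xi)$ if $u'(0)>0$, and $x=0$ otherwise---whose height is at least $u(0)>M_1$ and whose $x$-coordinate is nonnegative, down through $\sqrt{2(n-1)}$. As a graph $x=f(r)$ it satisfies $f'\le 0$ for $r\ge\sqrt{2(n-1)}$ and $f(\sqrt{2(n-1)})>0$, so Lemma~\ref{r_graph:crossing:above} gives $f(r)<0$ for $r>M_1$ where defined, contradicting $f\ge 0$ at the highest point. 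Therefore $u''<0$ throughout and the graph lies in $\underline{\Lambda}(0,-)$.

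For the ``moreover,'' I would first check that $(a,b)$ is finite. The only maximally extended solutions with an infinite end are the trumpets and the cylinder (Theorem~3 of~\cite{KM}); applying Lemma~\ref{r_graph:crossing:below} to a trumpet written over the $r$-axis shows its height at $x=0$ is at least $m_1$, while the cylinder has constant height $\sqrt{2(n-1)}$, so neither occurs when $u(0)<m_1$ or $u(0)>M_1$, and by Lemma~\ref{generic_graph_behavior2} the endpoints are finite. Since $a<0<b$ by Proposition~\ref{generic_graph_behavior1}, it remains to show $b-a\to 0$. Here the idea is to track the tangent angle $\alpha$ along the curve using~(\ref{SSEq}) and to compute the horizontal extent as $\Delta x=\int\cos\alpha\,ds=\int(\cos\alpha/\dot\alpha)\,d\alpha$. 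When $u(0)\to 0$ the convex arc remains at heights $r$ of order $u(0)$ while its tangent turns, and there the term $\tfrac{n-1}{r}\cos\alpha$ dominates $\dot\alpha$, so this integral is small; the steep portions contribute little because $\cos\alpha$ is small on them. Symmetrically, when $u(0)\to\infty$ the concave arc turns while $r$ is of order $u(0)$, where $-\tfrac{r}{2}\cos\alpha$ dominates $\dot\alpha$, again making $\Delta x$ small. I expect this quantitative step to be the main obstacle: one must compare with the model equations obtained by keeping only the dominant term $\tfrac{n-1}{u}$ (resp.\ $-\tfrac{u}{2}$) in~(\ref{x_graph_SSEq}) and control the crossover to the steep region uniformly in $u(0)$, in the spirit of the estimates proving Lemma~\ref{r_graph:crossing:below} and Lemma~\ref{r_graph:crossing:above}.
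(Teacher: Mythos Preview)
Your arguments for parts 1 and 2 are correct and rest on the same idea as the paper's: the $r$-graph Lemmas~\ref{r_graph:crossing:below} and~\ref{r_graph:crossing:above} forbid a monotone convex (resp.\ concave) branch in the right half-plane from connecting a height below $m_1$ (resp.\ above $M_1$) to the cylinder level $\sqrt{2(n-1)}$. Your unified contradiction via the first zero of $u''$ is a little cleaner than the paper's case split on the sign of $u'(0)$ (which for $u'(0)<0$ invokes Lemma~\ref{convexity_of_sub_horizontal_shooting} separately), but the content is the same.

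For the ``moreover,'' your heuristic matches one half of the paper's argument, namely integrating
\[
\frac{d}{dx}\bigl(\arctan u'\bigr)=\frac{xu'-u}{2}+\frac{n-1}{u}
\]
when the dominant term $(n-1)/u$ (resp.\ $-u/2$) is large. The gap is your assertion that the arc ``remains at heights $r$ of order $u(0)$'': in the convex case there is no a priori reason why $u(b)$ should be comparable to $u(0)$, and without that your angle-turning integral does not close. The paper supplies precisely this missing control with a second, independent estimate coming from the Gauss--Bonnet formula~(\ref{gauss_bonnet}): writing $u(b)=A\,u(0)$ and applying~(\ref{gauss_bonnet}) to the triangle with vertices $(0,u(0))$, $(0,u(b))$, $(b,u(b))$ yields a bound of the form $b\log A \lesssim u(b)\le\sqrt{2(n-1)}$, which combines with the direct integration bound (giving $b\lesssim A\,u(0)$) to force $b\to 0$ irrespective of how $A$ behaves as $u(0)\to 0$. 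An analogous Gauss--Bonnet triangle argument handles the $u(0)\to\infty$ case. So your sketch is pointed in the right direction, but the Gauss--Bonnet ingredient---absent from your outline---is exactly the device the paper uses to resolve the ``crossover'' you flag as the main obstacle.
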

\begin{proof}
First, we treat the case where $u(0) < m_1$. If $u'(0)<0$, then it follows from Lemma~\ref{convexity_of_sub_horizontal_shooting} that $u''(x)>0$ for $x\geq 0$. When $u'(0) \geq 0$, it follows from~(\ref{x_graph_SSEq}) that $u''(x)>0$ for $x \geq 0$ as long as $u < \sqrt{2(n-1)}$ on $[0,x]$. In both cases, we observe that a portion of the geodesic $(x, u(x))$ may be written as a graph over the $r$-axis: $(f(r),r)$, where $f$ is a solution of~(\ref{r_graph_SSEq}). We claim that $u < \sqrt{2(n-1)}$ on $[0,b)$. To see this, suppose to the contrary that $u(x) \geq \sqrt{2(n-1)}$ for some $x>0$. Then we may choose $f$ so that $f(r) >0$, $f'(r)>0$, and $f''(r)<0$ when $m_1 \leq r \leq \sqrt{2(n-1)}$. Applying Lemma~\ref{r_graph:crossing:below} shows that this is impossible, and therefore $u < \sqrt{2(n-1)}$ on $[0,b)$. In particular, we have $u''>0$ on $[0,b)$.

Now, we estimate $b$ in terms of $u(0)$. If, say, $u(b) \leq 3 u(0)$, then $u(x) \leq 3 u(0)$ on $[0,b)$, and we can write equation~(\ref{x_graph_SSEq}) as $$\frac{d}{dx} \left( \arctan u' \right) =  \frac{xu' - u}{2} + \frac{n-1}{u} \geq -\frac{u(0)}{2} + \frac{n-1}{3 u(0)},$$ where we have used $xu'-u$ is increasing on $(0,b)$. Integrating from $0$ to $b$, we have $$\pi \geq \left( \frac{n-1}{3 u(0)} - \frac{u(0)}{2} \right) b,$$ and thus $b \to 0$ as $u(0) \to 0$. In general, if $u(b) = A u(0)$, where $A>1$, then
\begin{equation}
\label{lemma:double_converge:bounded:eq1}
\pi \geq \left( \frac{n-1}{A u(0)} - \frac{u(0)}{2} \right) b.
\end{equation}
Applying the Gauss-Bonnet formula to the triangle $T$ with vertices $(0,u(0))$, $(0,u(b))$, and $(b,u(b))$, we have $$4 \pi \geq \int_T \frac{n-1}{r^2}dxdr = \frac{(n-1)b}{(A-1) u(0)} \left[ \log A + \frac{1}{A} -1 \right].$$ If $A$ is sufficiently large, then
\begin{equation}
\label{lemma:double_converge:bounded:eq2}
4 \pi \geq \frac{b \log A}{u(b)} \geq \frac{b \log A}{\sqrt{2(n-1)}}.
\end{equation}
It follows that $b \to 0$ as $u(0) \to 0$: Fix $\varepsilon > 0$, and choose $u(0) < \varepsilon e^{-1/\varepsilon}$. If $A < e^{1/\varepsilon}$, then $Au(0) < \varepsilon$ and~(\ref{lemma:double_converge:bounded:eq1}) implies $b \lesssim \varepsilon$. If $A \geq e^{1/\varepsilon}$, then $\log A \geq 1 / \varepsilon$ and~(\ref{lemma:double_converge:bounded:eq2}) implies $b \lesssim \varepsilon$.

Second, we treat the case where $u(0) > M_1$. Since $u(0) > \sqrt{2(n-1)}$, we know that $u''(0) < 0$ and $b<\infty$. When $u'(0) \leq 0$, it follows from~(\ref{x_graph_SSEq}) that $u''(x) < 0$ for $x \geq 0$ as long as $u > \sqrt{2(n-1)}$ on $[0,x]$. When $u'(0) >0$, we can use the sinusoidal shape of $u$ (and Lemma~\ref{generic_graph_behavior1.5}) to conclude that the first zero of $u'$ occurs before the first zero of $u''$, and similar reasoning shows that $u''(x)<0$ as long as $u > \sqrt{2(n-1)}$ on $[0,x]$. Now, the decreasing portion of the geodesic $(x, u(x))$ can be written as a graph over the $r$-axis: $(f(r),r)$, where $f$ is a solution to~(\ref{r_graph_SSEq}) and $f(r) >0$ and $f'(r) \leq 0$ when $u(b) \leq  r \leq M_1$. Applying Lemma~\ref{r_graph:crossing:above} shows $u(b) > \sqrt{2(n-1)}$. In paticular, we have $u''<0$ on $[0,b)$.

To estimate $b$ in terms of $u(0)$, we note that the function $f$ from the above paragraph is defined on $[\sqrt{2(n-1)}, M_1]$. Using the shape of the graph of $u$ and equations~(\ref{r_graph_SSEq}) and~(\ref{r_graph_SSEq_differentiated}) we know that $f > 0$ and $f'' < 0$ on $[\sqrt{2(n-1)}, M_1]$. Let $\Gamma$ denote the geodesic corresponding to the graphs of $u$ and $f$. Then the region bounded by $\Gamma$ and the $r$-axis contains the triangle $T$ with vertices $(0,M_1)$, $(0,\sqrt{2(n-1)})$, and $(b, u(b))$. Using the Gauss-Bonnet formula, we have $4 \pi \geq \int_T dxdr = [M_1 - \sqrt{2(n-1)} ] b/2$, so that $b \to 0$ as $u(b) \to \infty.$

Finally, the same arguments apply to the left end point $a$.
\end{proof}

Next, we prove a lemma which restricts the domain of a solution to~(\ref{x_graph_SSEq}) that intersects the $r$-axis with steep negative slope.
\begin{lemma}
\label{lemma:double_converge:slope1}
Let $u$ be a maximally extended solution to~\emph{(\ref{x_graph_SSEq})} defined on the interval $(a,b)$. Then $b \to 0$ as $u'(0) \to -\infty$.
\end{lemma}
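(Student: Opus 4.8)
The plan is to exploit the fact that as $u'(0) \to -\infty$ the initial angle $\alpha_0 = \arctan(u'(0))$ tends to $-\pi/2$, so that near the $r$-axis the curve is nearly vertical and is best viewed as a graph over the $r$-axis. Writing the curve as $(f(r),r)$, it solves~(\ref{r_graph_SSEq}) with $f(u(0)) = 0$ and $f'(u(0)) = \cot\alpha_0 = 1/u'(0) \to 0$. Now $f \equiv 0$, the $r$-axis (that is, the plane $\mathcal{P}$), is the solution of~(\ref{r_graph_SSEq}) with vanishing zeroth- and first-order data: one checks via~(\ref{r_graph_SSEq}) and~(\ref{r_graph_SSEq_differentiated}) that all derivatives of such a solution vanish at the base point, so uniqueness forces $f \equiv 0$. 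Hence, by continuous dependence on initial conditions for~(\ref{r_graph_SSEq}) on compact $r$-intervals inside $(0,\infty)$, the solutions $f$ converge to $0$ uniformly on each $[\eta, u(0)]$. Consequently the $x$-extent of the part of the curve lying at height $r \geq \eta$ tends to $0$ for every fixed $\eta > 0$.

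First I would dispose of the extreme values of $u(0)$ using Lemma~\ref{lemma:double_converge:bounded}: if $u(0) < m_1$ or $u(0) > M_1$, then $b \to 0$ already holds as $u(0) \to 0$ or $u(0) \to \infty$, so it suffices to treat $u(0)$ in a fixed compact subset of $(0,\infty)$, where the continuous dependence above is uniform. On this compact range I would combine the convergence $f \to 0$ with the shape results already established. By Lemma~\ref{convexity_of_sub_horizontal_shooting} and Lemma~\ref{almost_convexity_of_sub_horizontal_shooting}, together with the sinusoidal description of solutions, the curve descends essentially monotonically after leaving $(0, u(0))$ (remaining convex, respectively of degree at most $1$), so its maximal $x$-value $b$ is governed by the $x$-extent of this descending arc rather than by any later oscillation.

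The remaining---and main---difficulty is the stretch of the curve near the $x$-axis, where the metric $g_{Ang}$ and~(\ref{SSEq}) degenerate and the continuous dependence used above breaks down. To confine the curve there I would run a Gauss--Bonnet area estimate in the spirit of Lemma~\ref{lemma:double_converge:bounded}: if $b$ were bounded below along a sequence with $u'(0) \to -\infty$, then either the descending arc turns at a height bounded below, in which case the convergence $f \to 0$ on the corresponding compact interval already makes its $x$-extent small (a contradiction), or the arc reaches down toward $\{r = 0\}$, in which case the simple region trapped between the curve and the $r$-axis contains a strip of width bounded below reaching the $x$-axis, so that $\int_R \bigl(1 + \tfrac{n-1}{r^2}\bigr)\,dx\,dr$ diverges through the $\tfrac{n-1}{r^2}$ term. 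Since~(\ref{gauss_bonnet}) bounds this integral by $2\pi$ minus a bounded sum of exterior angles, this is impossible. Either way the width of the region, and hence $b$, is forced to $0$. The crux is thus this interaction near $\{r = 0\}$ between the degeneracy of the equation and the Gauss--Bonnet bound; the convergence to the plane away from the axis is comparatively routine.
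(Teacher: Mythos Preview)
Your outline shares the paper's reduction to $u(0)$ in a compact range via Lemma~\ref{lemma:double_converge:bounded} and the observation that the curve, viewed as a graph $(f(r),r)$ over the $r$-axis, converges to the plane on compact $r$-intervals. The gap is in the endgame near $\{r=0\}$, and it affects both branches of your dichotomy.

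In your first alternative (the descending arc turns at height $\geq\eta$), the convergence $f\to 0$ on $[\eta,M]$ only controls $x_{\min}$, the $x$-coordinate of the minimum; it says nothing about the ascending arc from $(x_{\min},r_{\min})$ to $(b,u(b))$, and that arc is what produces $b$. Indeed, as $x_{\min}\to 0$ with $r_{\min}\geq\eta$, the ascending arc converges to a geodesic $\Gamma[0,r_{\min},0]$, which by Lemma~\ref{shooting_horizontal_below_cylinder_is_degree_zero} is a type $(0,+)$ curve with a definite positive right endpoint, not the plane---so $b$ need not be small. In your second alternative, the ``simple region trapped between the curve and the $r$-axis'' cannot contain a strip of bounded width reaching the $x$-axis: by the very convergence $f\to 0$ you invoked, the region between the descending arc and the $r$-axis is \emph{thin} at heights bounded away from $0$, and the curve never touches $\{r=0\}$ (its minimum $r_{\min}$ is strictly positive), so nothing forces $\int (n-1)/r^2$ to diverge over any region bounded by the curve. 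The paper avoids this by splitting on the sign of $u''$: if $u''<0$ on all of $[0,b)$, then integrating $u'\leq u'(0)\leq -L$ gives $b\leq M/L$ directly; otherwise, continuous dependence is used to locate a point $(c,u(c))$ with both coordinates below $\varepsilon e^{-1/\varepsilon}$ and with $u'(c)\geq -1$, and from that controlled low point the quantitative estimates of Lemma~\ref{lemma:double_converge:bounded} (including a Gauss--Bonnet bound on a specific triangle) are re-run to bound $b-c$. The missing idea in your sketch is exactly this bootstrap from a low point with controlled slope; an unquantified Gauss--Bonnet argument on an unspecified region does not substitute for it.
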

\begin{proof}
Fix $\varepsilon >0$. We will show there is $L>0$ so that $b \lesssim \varepsilon$ when $u'(0) \leq -L$. By Lemma~\ref{lemma:double_converge:bounded}, there exist positive constants $m$ and $M$ so that $b< \varepsilon$ when $u(0) < m$ or $u(0) > M$, so we may assume that $m \leq u(0) \leq M$. There are two cases to consider, depending on the shape of $u$.

Case 1: $u'(0) \leq -L$ and $u'' < 0$ on $[0,b)$. Since $u(0) \leq M$, $u'(0) \leq -L$, and $u''<0$, we know that $x \leq M/L$ whenever $u(x)$ is defined (integrate $u' \leq -L$ from $0$ to $x$). Therefore $b \leq M/L$, and we may choose $L > M / \varepsilon$.

Case 2: $u'(0) \leq -L$ and $u''(x) \geq 0$ for some $x \geq 0$. For large enough $L$ (depending only on $M$ and $\varepsilon$), using the continuity of the differential equation~(\ref{r_graph_SSEq}), we know there is a point $(c,u(c))$ so that $c<\varepsilon$ and $u(c) < \varepsilon$. By choosing $L > M / \varepsilon$, we may assume that $u''(c) > 0$ (see Case 1). Furthermore, by allowing for $c <2 \varepsilon$, we may assume $u'(c) \geq -1$. We work with $\varepsilon e^{-1/\varepsilon}$ in place of $\varepsilon$: We assume $c, u(c) < \varepsilon e^{-1/\varepsilon}$. Arguing as in the proof of Lemma~\ref{lemma:double_converge:bounded}, we write equation~(\ref{x_graph_SSEq}) as $$\frac{d}{dx} \left( \arctan u'(x) \right) =  \frac{xu'(x) - u(x)}{2} + \frac{n-1}{u(x)} \geq - \varepsilon e^{-1/\varepsilon} + \frac{n-1}{u(x)},$$ where we have used $xu(x)'-u(x)$ is increasing when $x \geq c$, along with the estimates on $c$, $u(c)$, and $u'(c)$. If $u(b) = A u(c)$, for some $A>1$, then integrating from $c$ to $b$, we have
\begin{equation}
\label{lemma:double_converge:slope:eq1}
\pi \geq \left( \frac{n-1}{A u(c)} - \varepsilon e^{-1/\varepsilon} \right) (b-c).
\end{equation}
Applying the Gauss-Bonnet formula to the triangle $T$ with vertices $(c,u(c))$, $(c,u(b))$, we have $$4 \pi \geq \int_T \frac{n-1}{r^2}dxdr = \frac{(n-1)(b-c)}{(A-1) u(c)} \left[ \log A + \frac{1}{A} -1 \right],$$ so that
\begin{equation}
\label{lemma:double_converge:slope:eq2}
4 \pi \geq \frac{(b-c) \log A}{u(b)} \geq \frac{(b-\varepsilon e^{-1/ \varepsilon}) \log A}{\sqrt{2(n-1)}},
\end{equation}
when $A$ is sufficeintly large. If $A < e^{1/\varepsilon}$, then $Au(c) < \varepsilon$ and~(\ref{lemma:double_converge:slope:eq1}) implies $b \lesssim \varepsilon$. If $A \geq e^{1/\varepsilon}$, then $\log A \geq 1 / \varepsilon$ and~(\ref{lemma:double_converge:slope:eq2}) implies $b \lesssim \varepsilon$. If $u(b) \leq u(c)$, then $$\pi \geq \left( \frac{n-1}{u(c)} - \varepsilon e^{-1/\varepsilon} \right) (b-c),$$ and we also have $b \lesssim \varepsilon$.
\end{proof}

The following result will be used in the proof of Lemma~\ref{lemma:double_converge:bounded_v} to restrict the domain of a solution to~(\ref{x_graph_SSEq}) that intersects the $r$-axis with steep positive slope.
\begin{lemma}
\label{lemma:double_converge:slope2}
Let $u$ be a maximally extended solution to~\emph{(\ref{x_graph_SSEq})} defined on the interval $(a,b)$. If $u$ has a local maximum at $x_1 >0$ and $u(x_1) > \max \{ u(0), \sqrt{2(n-1)} \} +2$, then $b< 2x_1$.
\end{lemma}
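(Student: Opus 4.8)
The plan is to show that the branch of $u$ descending to the right of its maximum is so steep that it exhausts its graphical domain after an $x$-increment of less than $x_1$. I will compare this descending branch, height by height, with the ascending branch on $[0,x_1]$, whose total $x$-extent is exactly $x_1$.

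First I set up the descent. Since $u(x_1)>\sqrt{2(n-1)}$, equation~(\ref{x_graph_SSEq}) gives $u''(x_1)<0$, so $x_1$ is a strict maximum and $u'<0$ just past $x_1$; the curve descends and can be written as a graph $x=f(r)$ over the $r$-axis solving~(\ref{r_graph_SSEq}). For $r>\sqrt{2(n-1)}$ with $f>0$, $f'<0$, equations~(\ref{r_graph_SSEq}) and~(\ref{r_graph_SSEq_differentiated}) force $f''<0$ and $f'''<0$, exactly the concavity used in Lemma~\ref{r_graph:crossing:above}. Next I establish that the descent terminates by going vertical at a height $u(b)$ still above $\max\{u(0),\sqrt{2(n-1)}\}$; this is where the hypothesis enters, and the constant $\sqrt{2(n-1)}+2$ is precisely the constant $M_1$ of Lemma~\ref{r_graph:crossing:above}. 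Running the integration in the proof of that lemma on an interval $[h,R]$ with $h\ge\sqrt{2(n-1)}$ gives the quantitative form: if $f>0$ and $f'\le 0$ on $[h,R]$, then $R\le h+2$, since otherwise the barrier $f(h)\left[1-\frac14(r-h)^2\right]$ forces $f<0$, impossible as $f$ is a positive $x$-coordinate. If the descending branch ever reached cylinder height while still descending, applying this with $h=\sqrt{2(n-1)}$, $R=u(x_1)$ would give $u(x_1)\le\sqrt{2(n-1)}+2$, contradicting the hypothesis; hence the curve cannot reach a minimum below the cylinder, and (the solution being maximally extended, not a trumpet, with $\lim_{x\to b}u<\infty$ by Lemma~\ref{generic_graph_behavior2} and $u',u''$ of one sign near $b$ by Lemma~\ref{generic_graph_behavior1.5}) the descent ends at a vertical point $(b,u(b))$ with $u(b)>\sqrt{2(n-1)}$. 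Applying the estimate once more with $h=u(b)$, $R=u(x_1)$ yields $u(b)\ge u(x_1)-2>\max\{u(0),\sqrt{2(n-1)}\}$, so the descending range $[u(b),u(x_1)]$ sits strictly inside the ascending range $[u(0),u(x_1)]$.

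The heart of the argument is a steepness comparison at equal heights. Writing the inclination angle as in~(\ref{SSEq}), at each height $r$ the descending branch meets the horizontal at angle $\beta_{\mathrm{desc}}(r)=|\alpha|$ and the ascending branch at $\beta_{\mathrm{asc}}(r)$, both vanishing at the top $r=u(x_1)$. Eliminating arclength from~(\ref{SSEq}), the growth of $\beta$ as $r$ decreases is $\frac{x}{2}+\left(\frac r2-\frac{n-1}{r}\right)\cot\beta$ on the descending branch and $-\frac{x}{2}+\left(\frac r2-\frac{n-1}{r}\right)\cot\beta$ on the ascending branch. Thus at any height where $\beta_{\mathrm{desc}}=\beta_{\mathrm{asc}}$ the descending angle grows faster, by exactly $\frac{x_{\mathrm{desc}}+x_{\mathrm{asc}}}{2}>0$ (both $x$-coordinates are positive on $(u(b),u(x_1))$). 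A comparison argument then gives $\beta_{\mathrm{desc}}(r)\ge\beta_{\mathrm{asc}}(r)$, equivalently $|f'(r)|\le|g'(r)|$ where $x=g(r)$ is the ascending branch. Integrating, $b-x_1=\int_{u(b)}^{u(x_1)}|f'|\,dr\le\int_{u(b)}^{u(x_1)}|g'|\,dr<\int_{u(0)}^{u(x_1)}|g'|\,dr=x_1$, the last inequality strict because $u(b)>u(0)$. This gives $b<2x_1$.

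The main obstacle is this last comparison: because $u$ is smooth through its maximum, the two branches agree to second order there, so the separation $\beta_{\mathrm{desc}}\ge\beta_{\mathrm{asc}}$ must be seeded by a third-order computation. Differentiating~(\ref{x_graph_SSEq_differentiated}) at $x_1$ shows the difference between the descending branch and the reflected ascending branch has vanishing value, first, and second derivatives, and third derivative $-x_1u''(x_1)>0$; propagating the inequality then requires handling the degenerate regime where both angles tend to $0$ at the top. A secondary point is that when $x_1$ is not the first maximum, the relevant ascending piece is still monotone over $[u(b),u(x_1)]$, which holds because the preceding minimum lies below the cylinder and hence below $u(b)$.
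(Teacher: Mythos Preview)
Your proof is correct and follows essentially the same two-part structure as the paper's. Part~1, showing $u(b)>u(x_1)-2$ via the concavity of the descending $r$-graph $f$ (using $f'''<0$ above cylinder height), is identical to the paper's argument. For Part~2 the paper compares heights at symmetric $x$-positions, proving $u(x_1-s)\ge u(x_1+s)$ by a maximum-principle argument on the \emph{sum} $h(r)=f(r)+g(r)$ of the two $r$-graphs (showing $h<2x_1$ on $(u(b),u(x_1))$); your angle comparison $\beta_{\mathrm{desc}}\ge\beta_{\mathrm{asc}}$ is precisely the statement $|f'|\le g'$, i.e.\ $h'\ge 0$, so the two formulations are equivalent. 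The third-order seeding you invoke is the same computation the paper carries out, namely $\delta'''(0)=2u'''(x_1)=x_1 u''(x_1)<0$ (your sign convention is reversed because you take the opposite difference). Both routes then extract $b-x_1<x_1$ from $u(b)>u(0)$.
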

\begin{proof}
This lemma follows from the proofs of Claim 4.9 and Lemma 4.10 in~\cite{D}. Those results show that $u(b) > u(x_1)-2$, and $u(x_1 -s) \geq u(x_1 +s)$ when $s>0$. Since $u(b) > u(x_1)-2 > u(0)$, we have $b < 2x_1$. For convenience, we include proofs of these two facts.

Part 1: $u(b) > u(x_1)-2$. The graph $(x,u(x))$ for $x > x_1$ can be written as the graph $(f(r),r)$, where $f$ is a solution to~(\ref{r_graph_SSEq}). Now $f>0$ and $f'<0$ in a neighborhood of $u(x_1)$ (when $f(r)$ is defined), and using equations~(\ref{r_graph_SSEq}) and~(\ref{r_graph_SSEq_differentiated}), we also have $f''<0$ and $f'''<0$. Assuming $f'<0$, these inequalities hold when $r\geq \sqrt{2(n-1)}$. Repeatedly integrating $f'''<0$ from $r$ to $u(x_1)$, we have $0< [ 1- (u(x_1)-r)^2/4] f(r)$, so that $f'(r)=0$ for some $r>u(x_1)-2$; hence $u(b) > u(x_1)-2$.

Part 2: $u(x_1 -s) \geq u(x_1 +s)$ when $s>0$. Since $u'(x_1)=0$, using~(\ref{r_graph_SSEq}) and~(\ref{r_graph_SSEq_differentiated}), we have $$u'''(x_1) = \frac{x_1}{2} \left( \frac{n-1}{u(x_1)} - \frac{u(x_1)}{2} \right).$$ Let $\delta(s) = u(x_1+s) - u(x_1-s)$. Then $\delta(0) = \delta'(0) = \delta''(0) = 0$ and $\delta'''(0) = 2u'''(x_1) <0$. It follows that $\delta(s) < 0$ for small $s>0$. We will show that $\delta(s) < 0$  when $s>0$. Let $f$ be as in Part 1, and let $g$ be the solution to~(\ref{r_graph_SSEq}) corresponding to the graph of $u(x)$ for $x \leq x_1$. We note that there exists $0 < t < s$ so that $u(x_1+t ) = u(x_1-s)$ when $s>0$ is small. Setting $h® = f(r) +g(r)$, we have $h® = 2 x_1 + t -s  < 2 x_1$ so that $h® < 2 x_1$ when $r <u(x_1)$ is close to $u(x_1)$. We claim that $h® < 2 x_1$ for $r \in (u(b),u(x_1))$. To see this, suppose that $h® = 2 x_1$ for some $r \in (u(b),u(x_1))$. Then $h$ achieves a positive local minimum at some point $r_0 \in (u(b),u(x_1))$. At $r_0$ we have $h(r_0)>0$, $h'(r_0) = 0$, and $h''(r_0) \geq 0$, so that $$0 \leq \frac{h''(r_0)}{1+ (f'(r_0))^2} = \left(\frac{r}{2} - \frac{n-1}{r} \right) 2f'(r_0) -\frac{h(r_0)}{2}<0,$$ which is a contradiction. Therefore, $h < 2 x_1$ in $(u(b), u(x_1))$. Finally, to see $\delta(s) < 0$ when $s>0$, we suppose to the contrary that $\delta(s) =0$ for some $s>0$. Set $r= u(x_1+s) = u(x_1-s)$. Then $$2 x_1 > h® = (x_1 +s) + (x_1 - s) = 2 x_1,$$ which is a contradiction. We conclude that $\delta(s) < 0$ when $s>0$.
\end{proof}

Now, we prove an estimate for the second graphical component of a geodesic whose first graphical component is close to a half-entire graph in the first quadrant.
\begin{lemma}
\label{lemma:double_converge:bounded_v}
Let $\Gamma_i \in \underline{\Gamma}$ be a sequence of geodesic curves with at least $2$ graphical components. Let $u_i = \Lambda[0](\Gamma_i)$ and $v_i = \Lambda[1](\Gamma_i)$. Suppose the sequence $u_i$ converges to $u_\infty$, where $u_\infty$ is a half-entire graph in the first quadrant or the cylinder. Then there exist positive constansts $m$, $M$, and $L$, depending on $u_\infty$, so that $m \leq v_i(0) \leq M$ and $|v_i'(0)| \leq L$.
\end{lemma}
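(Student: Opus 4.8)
The plan is to show that each of the four ways in which the bounds could fail along a subsequence---$v_i(0)\to 0$, $v_i(0)\to\infty$, $v_i'(0)\to-\infty$, or $v_i'(0)\to+\infty$---forces an endpoint of the maximal domain of $v_i$ to behave in a way incompatible with the location of the point at which $v_i$ is joined to $u_i$. First I would set up the geometry. By Proposition~\ref{generic_graph_behavior1} each $v_i\in\underline\Lambda$ meets the $r$-axis exactly once, so $v_i(0)$ and $v_i'(0)$ are well defined. Let $P_i$ be the common endpoint of $u_i$ and $v_i$, that is, the vertical-tangent point at which $\Lambda[0](\Gamma_i)$ transitions to $\Lambda[1](\Gamma_i)$; its $x$-coordinate is an endpoint, say $b_{v_i}$, of the maximal interval of definition of $v_i$. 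Using $u_i\to u_\infty$ together with the classification in Proposition~\ref{half_entire_graph_types}, I would record the two regimes for the limit of $P_i$: if $u_\infty$ is a quarter sphere it exits through the $x$-axis at a point $(p,0)$ with $p>0$, so $P_i\to(p,0)$; if $u_\infty$ is a trumpet or the cylinder it exits through infinity, so $x(P_i)\to\infty$. In either case the essential feature is that $b_{v_i}=x(P_i)$ is \emph{bounded away from $0$}.

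Granting this, the bounds on $v_i(0)$ are immediate. If $v_i(0)\to 0$ along a subsequence, then Lemma~\ref{lemma:double_converge:bounded}(1) places $v_i$ in $\underline\Lambda(0,+)$ and forces both endpoints of its domain, in particular $b_{v_i}$, to tend to $0$, contradicting that $b_{v_i}$ is bounded away from $0$. If instead $v_i(0)\to\infty$, Lemma~\ref{lemma:double_converge:bounded}(2) yields the same conclusion and the same contradiction. Hence there are $m,M>0$ with $m\le v_i(0)\le M$. The steep-negative-slope case is handled in the same spirit: if $v_i'(0)\to-\infty$, then Lemma~\ref{lemma:double_converge:slope1} again forces $b_{v_i}\to0$, a contradiction, so $v_i'(0)\ge -L$.

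It remains to bound $v_i'(0)$ from above, and this is where I expect the real work. Here I would separate the two limit regimes. When $u_\infty$ is a quarter sphere, the returning arc $v_i$ dips toward the $x$-axis near $(p,0)$ and attains a minimum there, after which Lemma~\ref{monotonicity_after_minimum} forces it to be strictly convex up to the $r$-axis; tracing the sign of the slope along this convex increasing branch shows $v_i'(0)<0$, so a positive blow-up cannot occur in this regime. The genuinely delicate case is $u_\infty$ a trumpet (or the cylinder), where $v_i'(0)$ is positive and both coordinates of $P_i$ diverge, so the domain-shrinking contradictions above are unavailable. In this case I would argue that a slope $v_i'(0)$ exceeding the trumpet slope forces $v_i$, by comparison with the trumpets of~\cite{KM}, to turn over at a local maximum $x_1^i$ with $v_i(x_1^i)\to\infty$ as $v_i'(0)\to\infty$; Lemma~\ref{lemma:double_converge:slope2} then gives $b_{v_i}<2x_1^i$ together with $v_i(b_{v_i})>v_i(x_1^i)-2$. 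Finally I would feed the resulting tall climbing region into the Gauss-Bonnet formula~(\ref{gauss_bonnet}): the Euclidean triangle beneath the ascending part of $v_i$ has area comparable to $x_1^i\,v_i(x_1^i)\to\infty$, while the Gauss-Bonnet integral $\int(1+(n-1)/r^2)\,dx\,dr$ over any simple geodesic region is bounded by $2\pi$ plus the external angles, giving the required contradiction.

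The main obstacle is precisely this last step---the steep-positive-slope blow-up in the trumpet/cylinder regime---because there neither the boundedness of $x(P_i)$ nor that of $r(P_i)$ can be used directly, and one must quantify how steeply $v_i$ can leave the $r$-axis and convert that into an area that Gauss-Bonnet forbids. The reflection symmetry $x\mapsto -x$ of~(\ref{x_graph_SSEq}) is a convenient bookkeeping device throughout, since it interchanges the two signs of the slope and the two endpoints $a_{v_i},b_{v_i}$, letting the negative- and positive-slope estimates be deduced from one another.
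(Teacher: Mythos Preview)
Your setup and the first three bounds are essentially the paper's argument: the shared right endpoint $b_{v_i}=b_{u_i}$ is bounded away from the $r$-axis, so Lemma~\ref{lemma:double_converge:bounded} and Lemma~\ref{lemma:double_converge:slope1} give $m\le v_i(0)\le M$ and $v_i'(0)\ge -L$ exactly as you describe.

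The gap is in your treatment of the upper bound for $v_i'(0)$. In the quarter--sphere case your appeal to Lemma~\ref{monotonicity_after_minimum} is in the wrong direction: that lemma gives strict convexity on $[x_0,b)$, i.e.\ to the \emph{right} of a positive minimum, whereas you need convexity on $[0,x_0]$ to conclude $v_i'(0)<0$. In fact the conclusion $v_i'(0)<0$ is false when $u_\infty\in\underline{O}^+$: once the lemma and Proposition~\ref{double_convergence_to_boundary} are established one has $v_i'(0)\to\tan\alpha_{u_\infty}>0$. In the trumpet/cylinder case you assume $v_i$ has a local maximum, but in the scenario where $u_i$ is type $(1,-)$ near a trumpet, $v_i$ is type $(0,+)$ and, if $v_i'(0)>0$, globally increasing on $[0,b_{v_i})$ with no maximum at all; your Gauss--Bonnet triangle also needs $x_1^i$ bounded below, which you have not shown.

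The paper avoids both difficulties with a single uniform argument that reverses your order of operations. It first uses Gauss--Bonnet to \emph{produce} a local maximum close to the $r$-axis: choosing $C>4\pi/\varepsilon$ with $u_i<C$ on $[0,2\varepsilon]$, a large $v_i'(0)$ sends $v_i$ above $2C$ at some $x_0<\varepsilon$, and if $v_i$ had no maximum on $(0,2\varepsilon)$ the rectangle $[x_0,x_0+\varepsilon]\times[C,2C]$ would sit between $u_i$ and $v_i$, violating~(\ref{gauss_bonnet}). Only then does it invoke Lemma~\ref{lemma:double_converge:slope2} to get $b_{v_i}<4\varepsilon$, contradicting the lower bound on $b_{u_i}=b_{v_i}$. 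The key device you are missing is using $u_i$ itself (bounded near $x=0$) as the lower edge of the Gauss--Bonnet region.
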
 
\begin{proof}
By choosing $u_i$ sufficiently close to $u_\infty$, we may assume that the right end point of $u_i$ is bounded away from the $r$-axis. Applying Lemma~\ref{lemma:double_converge:bounded} and Lemma~\ref{lemma:double_converge:slope1}, we see that there are positive constants $m$, $M$, and $L$ so that $m \leq v_i(0) \leq M$ and $v_i'(0) \geq -L$. We want to find an upper bound for $v_i'(0)$.

Fix $\varepsilon > 0$, and choose $C > 4 \pi / \varepsilon$ so that $u_\infty < C$ on $[0, 2 \varepsilon]$. We may also assume that $u_i < C$ on $[0, 2 \varepsilon]$ (where we use continuity near the plane). If $v_i'(0)$ is sufficiently large, then $v_i(x_0) = 2C$ for some $x_0 < \varepsilon$. We claim that $v_i$ has a local maximum at some point in $(0,2 \varepsilon)$. Suppose to the contrary that $v_i$ has no local maximum in $(0,2 \varepsilon)$. Then the rectangle $R$: $x_0 \leq x \leq x_0 + \varepsilon$, $C \leq r \leq 2C$ is contained in a simple region bounded by the geodesic $\Gamma_i$ and the $r$-axis. Applying the Gauss-Bonnet formula we arrive at a contradiction, which proves the claim. It follows from Lemma~\ref{lemma:double_converge:slope2} that the right end point of $v_i$ is less than $4 \varepsilon$. Since the right end point of $u_i$ is bounded away from the $r$-axis, we conclude that $v_i'(0)$ has an upper bound.
\end{proof}

Combining the previous results, we have the following proposition, which deals with the convergence of geodesics to half-entire graphs.

\begin{proposition}
\label{double_convergence_to_boundary}
Let $\Gamma_i \in \underline{\Gamma}$ be a sequence of geodesics with at least $(k+2)$ graphical components, and suppose that the graphs $\Lambda[k](\Gamma_i)$ converge to a half-entire graph in the first quadrant $\Lambda_0$. Then, either $\Lambda[k + 1](\Gamma_i)$ or $\Lambda[k - 1](\Gamma_i)$ converge to $\Lambda_0$. The conclusion also holds when $\Lambda_0$ is the cylinder.
\end{proposition}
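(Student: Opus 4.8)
The plan is to isolate the single graphical component of $\Gamma_i$ that continues past the end where $\Lambda[k](\Gamma_i)$ exits the upper half-plane, show that its $r$-axis data is precompact, and then identify every subsequential limit with $\Lambda_0$ using the no-interpolation lemma. Write $u_i := \Lambda[k](\Gamma_i)$, so $u_i \to \Lambda_0$. By Proposition~\ref{half_entire_graph_types}, $\Lambda_0$ is either an inner or outer quarter sphere (exiting perpendicularly through the $x$-axis) or a trumpet (exiting through infinity). The segment $\Lambda[k](\Gamma_i)$ meets its neighbors $\Lambda[k-1](\Gamma_i)$ and $\Lambda[k+1](\Gamma_i)$ at its two vertical-tangent endpoints, and the endpoint through which $\Gamma_i$ proceeds toward the exit of $\Lambda_0$ is shared with exactly one of them. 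After passing to a subsequence I may assume this neighbor is always the same; relabeling or reversing the parametrization if necessary, I assume it is $v_i := \Lambda[k+1](\Gamma_i)$, the case $\Lambda[k-1](\Gamma_i)$ being identical and accounting for the disjunction in the statement.

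Next I would establish precompactness. Applying Lemma~\ref{lemma:double_converge:bounded_v} (with $\Lambda[k], \Lambda[k+1]$ in place of $\Lambda[0], \Lambda[1]$) produces constants $m, M, L > 0$ with $m \leq v_i(0) \leq M$ and $|v_i'(0)| \leq L$, so the pairs $(r_{v_i}, \alpha_{v_i})$ lie in a fixed compact subset of $\mathbb{R}^+ \times (-\pi/2, \pi/2)$. Since $d$-convergence is equivalent to smooth convergence on compact subsets of $\mathbb{H}$, every subsequence of $v_i$ has a further subsequence converging to some $v_\infty \in \underline{\Lambda}$ (or to the cylinder).

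The heart of the argument is to show each such $v_\infty$ is half-entire and equals $\Lambda_0$. The shared point $P_i$ is the exit endpoint of $u_i$; because $u_i \to \Lambda_0$ and $\Lambda_0$ exits $\mathbb{H}$, the height of $P_i$ tends to $0$ when $\Lambda_0$ is a quarter sphere (using the perpendicular exit and the sinusoidal shape from Lemma~\ref{generic_graph_behavior1.5}) and to $\infty$ when $\Lambda_0$ is a trumpet. Since $P_i$ is an endpoint of $v_i$, any subsequential limit $v_\infty$ inherits an endpoint on the boundary of $\mathbb{H}$, so $v_\infty$ cannot be a finite segment and must be a half-entire graph in the first quadrant. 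Now $u_i \to \Lambda_0$ and $v_i \to v_\infty$ are convergences of consecutive graphical components to half-entire graphs, so Lemma~\ref{lemma:double_converge:interpolate} forces $v_\infty = \Lambda_0$. As every subsequential limit equals $\Lambda_0$ and the data is precompact, the full sequence $v_i \to \Lambda_0$. The cylinder case runs identically, using the cylinder clauses already built into Lemma~\ref{lemma:double_converge:bounded_v} and Lemma~\ref{lemma:double_converge:interpolate}.

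I expect the main obstacle to be the third paragraph: verifying that $v_\infty$ genuinely exits $\mathbb{H}$ rather than closing up as a finite segment. This is transparent for the perpendicular $x$-axis exit, where $P_i$ descends to the axis, but more delicate for the trumpet, where $P_i$ escapes to infinity and one must confirm that the bounded $r$-axis data of $v_i$ is compatible with an endpoint at infinity, i.e.\ that $v_\infty$ is truly a trumpet asymptotic to the same ray as $\Lambda_0$. Much of this difficulty, however, is already absorbed into Lemma~\ref{lemma:double_converge:bounded_v} (whose proof invokes the Gauss-Bonnet estimates of Lemma~\ref{lemma:double_converge:bounded} and Lemma~\ref{lemma:double_converge:slope1}), so the remaining work is the limit bookkeeping together with the orientation and subsequence argument.
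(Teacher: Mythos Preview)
Your proposal is correct and follows essentially the same route as the paper: reduce to the neighbor sharing the exit endpoint, invoke Lemma~\ref{lemma:double_converge:bounded_v} for precompactness of the $r$-axis data, argue that any subsequential limit $v_\infty$ must be half-entire because the shared endpoint leaves $\mathbb{H}$, and then apply Lemma~\ref{lemma:double_converge:interpolate} to identify $v_\infty$ with $\Lambda_0$. The paper handles the orientation issue by the parity assumption ``without loss of generality $k$ is even'' rather than by passing to a subsequence; your relabeling remark already captures this, and the subsequence step is in fact unnecessary (and, taken literally, would only yield subsequential convergence). Your worry about the trumpet case is also slightly overstated: the paper dispatches it with the one-line contrapositive that if $v_\infty$ had a finite right endpoint in $\mathbb{H}$, continuity would force the shared endpoints $P_i$ to converge there, contradicting $u_i \to u_\infty$.
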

\begin{proof}
Without loss of generality, we may assume that $k$ is even. Under this assumption, the right end point of $\Lambda[k](\Gamma_i)$ is also the right end point of $\Lambda[k+1](\Gamma_i)$. To simplify notation, we let $u_i = \Lambda[k](\Gamma_i)$, $v_i = \Lambda[k+1](\Gamma_i)$, and $u_\infty = \Lambda_0$. Here we are identifying a solution to~(\ref{x_graph_SSEq}) with its graph.

With the above notation, the solutions $u_i$ converge to the half-entire graph $u_\infty$. To prove the proposition, we need to show that the sequence of initial conditions $(v_i(0),v_i'(0))$ converges to $(u_\infty(0),u_\infty'(0))$. It is sufficient to show that every subsequence of $(v_i(0),v_i'(0))$ has a subsequence converging to $(u_\infty(0),u_\infty'(0))$.

We know from Lemma~\ref{lemma:double_converge:bounded_v} that every subsequence of $(v_i(0),v_i'(0))$ has a convergent subsequence. Let $v_\infty$ be the solution of~(\ref{x_graph_SSEq}) corresponding to such a convergent subsequence. Notice that $v_\infty$ is a half-entire graph in the first quadrant (otherwise, $v_\infty$ has a right end point in the upper-half plane, and by continuity $u_i$ cannot converge to $u_\infty$). It follows from Lemma~\ref{lemma:double_converge:interpolate} that $v_\infty = u_\infty$.
\end{proof}

As an application of Proposition~\ref{double_convergence_to_boundary}, we describe the boundaries of the sets $\underline{\Lambda}(k,\pm)$ in the topology defined on $\underline{\Lambda}$. We note that the topology on $\underline{\Lambda}$ is not complete, and in particular there are sequences $\lambda_i \in \underline{\Lambda}$ that converge smoothly on compact subsets of $\mathbb{H}$ to the plane or the cylinder.

\begin{proposition} 
\label{boundaries_by_type}
The following statements hold:
\begin{displaymath}
\begin{array}{ll}
\emph{1.} \quad \partial \underline{\Lambda}(0, +) = \underline{T}, & \emph{2.} \quad  \partial \underline{\Lambda} (0, -) = \underline{I}, \\ \\
\emph{3.} \quad \partial \underline{\Lambda} (1, +) =  \underline{I}^+ \cup \underline{O}^- \cup \underline{T}^- \cup \{\mathcal{S} \} , & \emph{4.} \quad \partial \underline{\Lambda} (1, -) =  \underline{I}^- \cup \underline{O}^+ \cup  \underline{T}^+ \cup\{\mathcal{S} \} , \\  \\
\emph{5.} \quad \partial \underline{\Lambda} (2,+) = \underline{O},
\end{array}
\end{displaymath}
where $\partial$ is the boundary from the topology defined on $\underline{\Lambda}$.
\end{proposition}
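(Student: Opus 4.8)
The plan is to work entirely in the parameter model: each $\Lambda\in\underline\Lambda$ is recorded by its $r$-axis data $(r_\Lambda,\alpha_\Lambda)\in\mathbb{R}^+\times(-\pi/2,\pi/2)$, and by the remark following the definition of $d$ the topology on $\underline\Lambda$ is exactly the Euclidean topology on this punctured strip (the cylinder point $(\sqrt{2(n-1)},0)$ is deleted, and convergence is smooth on compact subsets of $\mathbb{H}$). I would first reduce the whole statement to a local-constancy fact plus a boundary-by-boundary identification. Namely, I claim the type map $\Lambda\mapsto(\deg\Lambda,\operatorname{sign}u''\ \text{near the right end})$ is locally constant on $\underline\Lambda\setminus\underline H$; granting this, every topological boundary $\partial\underline\Lambda(k,\pm)$ is contained in $\underline H=\underline I\cup\underline O\cup\underline T$ (Proposition~\ref{half_entire_graph_types}), and it remains only to decide, for each half-entire graph, which of the five regions have it in their closure.

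For the local-constancy claim I would argue as in the discussion preceding Proposition~\ref{generic_graph_behavior1}: differentiating shows that at any zero of $u''$ one has $u'''=-(1+(u')^2)(n-1)u'/u^2$, which is nonzero because $u'\neq0$ there (if $u'=u''=0$ the curve is the cylinder). Hence the zeros of $u''$ are simple, finite in number (Proposition~\ref{generic_graph_behavior1}), and cannot merge under perturbation. Moreover, for $\Lambda\notin\underline H$ both endpoints are turning points at finite positive height, where by Lemma~\ref{generic_graph_behavior1.5} $u''$ is bounded away from $0$, so no zero of $u''$ can escape through an end. Combined with the continuity of geodesics (smooth convergence on compacta), this shows that $\deg\Lambda$ and the sign of $u''$ near the right end are locally constant off $\underline H$, which gives $\partial\underline\Lambda(k,\pm)\subseteq\underline H$.

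The heart of the proof is then the following geometric principle, which I would establish case by case with the shape lemmas of Section~\ref{prelim:x_graph}: a curve of $\underline\Lambda(k,\pm)$ limits to a point of $\underline H$ exactly when one of its turning points degenerates, and this can happen in only two ways---a convex (``upward'') turning point escapes to infinity, producing a trumpet in $\underline T$, or a turning point descends to the $x$-axis, producing a quarter sphere in $\underline I\cup\underline O$ or the sphere $\mathcal{S}$. Using Lemmas~\ref{shooting_horizontal_below_cylinder_is_degree_zero}, \ref{monotonicity_after_minimum}, \ref{convexity_of_sub_horizontal_shooting}, \ref{almost_convexity_of_sub_horizontal_shooting} and~\ref{lemma:double_converge:bounded} together with Proposition~\ref{half_entire_graph_types}, I would read off which degeneration is available from each region and what the type of the limiting half-entire graph is: from $\underline\Lambda(0,+)$ (convex arcs with two turning points) only the escape-to-infinity degeneration occurs, giving $\partial\underline\Lambda(0,+)=\underline T$; from $\underline\Lambda(0,-)$ (concave caps) only a descending turning point can reach the axis, giving $\underline I$; from $\underline\Lambda(2,+)$ the analogous degeneration gives $\underline O$; and the degree-one regions $\underline\Lambda(1,\pm)$, having one upward and one downward end, admit both kinds of degeneration, producing the listed unions with $\mathcal{S}$ as the common corner where all four of $\underline\Lambda(0,-),\underline\Lambda(1,\pm),\underline\Lambda(2,+)$ meet. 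For the reverse inclusions I would, for each half-entire graph $\Lambda_0$ in a claimed boundary, produce one approaching sequence inside the region and one inside its complement; Proposition~\ref{double_convergence_to_boundary} and Lemma~\ref{lemma:double_converge:interpolate} are exactly what control the limits of the adjacent graphical components (and forbid a region from being bounded by two distinct half-entire graphs at once), so that the realized limit type is the asserted one.

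I expect the main obstacle to be this last step: the precise degree bookkeeping on the two sides of each half-entire locus, and in particular verifying the four-fold junction at $\mathcal{S}$, where the degree and the right-end sign must both jump across a single wall as a turning point passes through the axis. Controlling this requires knowing that the inner quarter spheres cross the $r$-axis at bounded height (so that the $(1,+)$ and $(1,-)$ regions genuinely abut all of $\underline I^\pm$, rather than being swamped by the pure $(0,-)$ behavior of Lemma~\ref{lemma:double_converge:bounded} at large $r$) together with a careful sign analysis of $u''$ as the curve dips below the cylinder; this is where the convexity lemmas and the double-convergence proposition do the real work.
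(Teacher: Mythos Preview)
Your plan follows the same architecture as the paper's proof: first establish that the type map is locally constant on $\underline\Lambda\setminus\underline H$, so that every boundary point is a half-entire graph, and then for each half-entire graph determine which regions abut it by perturbing off the one-dimensional half-entire locus and invoking Proposition~\ref{double_convergence_to_boundary} on the adjacent component. The framing in terms of ``degenerating turning points'' is dual to the paper's perturbation language but amounts to the same thing.

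Where your sketch goes astray is in locating the main difficulty. The sphere junction you flag as the principal obstacle is in fact handled in one line: since $\mathcal{S}$ is itself a limit of curves in $\underline I^{\pm}$ (and $\underline O^{\pm}$), it inherits membership in the corresponding boundaries from the already-established inclusions. The genuinely nontrivial step is your unjustified assertion that ``from $\underline\Lambda(2,+)$ the analogous degeneration gives $\underline O$'', or equivalently that a perturbation of a trumpet $T\in\underline T^+$ whose right end still goes to $+\infty$ must be of type $(0,+)$ and not $(2,+)$. Both turning points of a $(2,+)$ curve are convex, so under your degeneration heuristic either one is eligible to escape to infinity and produce a trumpet; none of Lemmas~\ref{shooting_horizontal_below_cylinder_is_degree_zero}--\ref{almost_convexity_of_sub_horizontal_shooting}, Lemma~\ref{lemma:double_converge:bounded}, or Proposition~\ref{double_convergence_to_boundary} excludes this. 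The paper closes this with a direct Gauss-Bonnet argument: if the perturbed curve were $(2,+)$ it would have a local maximum at some $x_0$, and since the curve converges to the globally convex trumpet one has $x_0\to\infty$; the region trapped between the curve and the cylinder then has area tending to infinity, contradicting~(\ref{gauss_bonnet}). This is the step your plan is missing, and it is not a consequence of the shape lemmas you cite.
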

\begin{proof}
It follows from the continuity of geodesics that a maximally extended geodesic graph is in the interior of some $\underline{\Lambda}(k,\pm)$ when it is not a half-entire graph. Therefore, in order to classify the boundaries of the sets $\underline{\Lambda}(k,\pm)$, it suffices to analyze the half-entire graphs.

We begin by considereing the half entire graphs in $\underline{T}$.  Let $T = \Lambda[0](\Gamma[r_T, \alpha_T])$ be a trumpet. Since the set of initial data $(r, \alpha)$ corresponding to half-entire graphs is one-dimensional, we can perturb the initial data to obtain curves $\Gamma_\epsilon : = \Gamma[r_T + \epsilon_r, \alpha_T + \epsilon_\alpha]$ so that $\Lambda[0] (\Gamma_\epsilon)$ is not in $\underline{H}$ for arbitrarily small $\epsilon$. Let $u_{i, \epsilon}: (a_{i, \epsilon}, b_{i, \epsilon} ) \rightarrow \mathbb{R}$ be the function whose graph is $\Lambda[i] (\Gamma_\epsilon)$. If $T \in \underline{T}^+$, then $a_{i, \epsilon}$ is bounded for small $\epsilon$, so that $\lim_{x \rightarrow a_i} u'_{i, \epsilon}(x) = -\infty$. By construction, we have $b_{0, 0} = \infty$ and $b_{0, \epsilon} < \infty$, for $\epsilon \neq 0$. There are two cases to consider: $(a)$ $\lim_{x \to b_{0, \epsilon}} u_{0, \epsilon}(x) = \infty$ and $(b)$ $\lim_{x \to b_{0, \epsilon} } u_{0, \epsilon}(x) = - \infty$.

In case $(a)$, we claim that $u_{0, \epsilon} (x)$ is a globally convex function. If not, then the graph of $u_{0, \epsilon} (x)$ is type $(2, +)$. Since $u_{0, \epsilon}'(0)>0$, we know that $u_{0, \epsilon}$ is convex in the second quadrant. It follows that there are points $0<x_0<x_1$ so that $u_{0, \epsilon}$ has a maximum at $x_0$ and a minimum at $x_1$. Since $u_{0, \epsilon}$ converges to a globally convex function, we have $x_0 \to \infty$ as $\epsilon \rightarrow 0$. We note that $u_{0, \epsilon}(x_1) < \sqrt{2(n-1)}$ so that $u_{0, \epsilon}$ intersects the cylinder between $x_0$ and $x_1$. Applying the Gauss-Bonnet formula to the region contained between the graph of $u_{0, \epsilon}$ and the cylinder, we arrive at a contradiction (since the area of this region approaches $\infty$ as $\epsilon \to 0$). We conclude that $u_{0, \epsilon}$ is globally convex.  Applying Proposition~\ref{double_convergence_to_boundary} to $\Gamma_\epsilon$ we see that $u_{1, \epsilon}(x) \rightarrow u_{0, 0} (x)$ as $\epsilon \to 0$, and examining the possible types of curves, we see that the graph of $u_{1, \epsilon}$ must be degree $1$ for small $\epsilon$. This says that $T $ is in the boundary of $ \underline{\Lambda}(0, +)$ and $ \underline{\Lambda} (1, -)$. In case $(b)$, we similarly conclude that the graph of  $u_{0, \epsilon}$ is type $(1, -)$ and the graph of $u_{1, \epsilon}$ is type $(0,  +)$ for small $\epsilon$. In both cases we get that  $T$ is in the boundary of $ \underline{\Lambda}(0, +)$ and $ \underline{\Lambda} (1, -)$. A similar result holds when $T \in \underline{T}^-$.

Next, we consider the half-entire graphs in $\underline{I}$. Let $I$ be an inner-quarter sphere (or the sphere). By performing a similar perturbation as above, we obtain curves $\Gamma_\epsilon$ with $\Lambda[0] (\Gamma_\epsilon) \notin \underline{H}$ converging to $I$ as $\epsilon \rightarrow 0$. If $I \in \underline{I}^+$, then it is type $(0,-)$, and an argument similar to the one in the trumpet case shows that $\Lambda[0] (\Gamma_\epsilon)$ and $\Lambda[1] (\Gamma_\epsilon)$ are type $(0,-)$ and type $(1,+)$ (or type $(1,+)$ and type $(0,-)$). It follows that $\underline{I}^+$ is contained in both $\partial \underline{\Lambda}(0, -)$ and $ \partial \underline{\Lambda}(1, +)$. A similar result holds for $\underline{I}^-$. Also, since the sphere $\mathcal{S}$ is the limit of elements in $\underline{I}^+$ (and $\underline{I}^-$), we see that $\mathcal{S}$ is in $\partial \underline{\Lambda}(0, -)$, $ \partial \underline{\Lambda}(1, -)$, and $\partial \underline{\Lambda}(1, +)$.

Lastly, we consider the outer-quarter spheres. Arguing as we did for the inner-quarter spheres, we have $\underline{O}^+$ is contained in both $\partial \underline{\Lambda}(1, -)$ and $ \partial \underline{\Lambda}(2, +)$, and a similar result holds for $\underline{O}^-$. We note that $\mathcal{S} \in \partial \underline{\Lambda}(2, +)$.

Finally, by considering the possible limiting shapes of different types of curves and using the continuity of geodesics, we can complete the proof of the proposition. For instance, the limit of type $(1,+)$ curves can only be type $(0,+)$, type $(0,-)$, or type $(1,+)$, and by continuity such a limit cannot be in $\underline{T}^+$, $\underline{I}^-$ or $\underline{O}^+$.
\end{proof}

Several convergence results follow from Proposition~\ref{boundaries_by_type}. For instance, the geodesic limit of type $(1,-)$ curves whose right end points remain bounded away from the $r$-axis in a compact subset of $\mathbb{H}$, must either be a type $(1,-)$ curve or a type $(0,-)$ curve. In particular, if these curves converge to a half-entire graph, then it must in $\underline{I}^-$. We collect some of these results in the following corollary.
\begin{corollary} 
\label{boundaries_by_type_cor}
Let $\Lambda_t$ be a family of geodesic segments in $\underline{\Lambda}(k, \pm)$ whose right end points $p_t$ remain bounded away from the $r$-axis in a compact subset of $\mathbb{H}$. If $p_t \to p_\infty$, then there exists $\Lambda_\infty \in \underline{\Lambda}$ so that $\Lambda_t \to \Lambda_\infty$ both as geodesics and in the topology defined on $\underline{\Lambda}$. Moreover, if $\Lambda_\infty$ is a half-entire graph, then the following statements hold:
\begin{quote}
\begin{tabular}{ll}
\emph{1.} If $\Lambda_t \in \underline{\Lambda}(0, +)$, then $\Lambda_\infty$ is in $\underline{T}^-$, & \emph{2.} If $\Lambda_t \in \underline{\Lambda}(0, -)$, then $\Lambda_\infty$ is in $\underline{I}^-$, \\ \\
\emph{3.} If $\Lambda_t \in \underline{\Lambda}(1, +)$, then $\Lambda_\infty$ is in $\underline{O}^-$ or $\underline{T}^-$, & \emph{4.} If $\Lambda_t \in \underline{\Lambda}(1, -)$, then $\Lambda_\infty$ is in $\underline{I}^-$, \\ \\
\emph{5.} If $\Lambda_t \in \underline{\Lambda}(2, +)$, then $\Lambda_\infty$ is in $\underline{O}^-$.
\end{tabular}
\end{quote}
\end{corollary}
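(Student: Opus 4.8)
The plan is to read this corollary as the ``compact'' refinement of Proposition~\ref{boundaries_by_type}: the hypothesis that the right endpoints $p_t$ stay in a fixed compact subset of $\mathbb{H}$ bounded away from the $r$-axis is precisely what both guarantees a genuine limit in $\underline{\Lambda}$ (rather than a degeneration to $\mathcal{P}$ or $\mathcal{C}$) and selects, among the half-entire graphs allowed by Proposition~\ref{boundaries_by_type}, only those that exit the upper-half plane on the \emph{left}. First I would establish the convergence statement, and then identify $\Lambda_\infty$ by combining Proposition~\ref{boundaries_by_type} with the location of $p_\infty$.

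For the convergence, recall that the right endpoint $p_t=(b_t,u_t(b_t))$ of $\Lambda_t\in\underline{\Lambda}(k,\pm)$ is a vertical-tangent point of the underlying geodesic, and that within a fixed $\underline{\Lambda}(k,\pm)$ the tangent direction there is the constant vertical direction $\alpha=\pm\pi/2$ (with the sign matching the sign of $u''$ near $b_t$). I would therefore write $\Lambda_t$ as the maximal graphical component of the geodesic $\Gamma[b_t,u_t(b_t),\pm\pi/2]$ and apply continuous dependence on initial data: since $p_t\to p_\infty\in\mathbb{H}$, these geodesics converge to $\Gamma[p_\infty,\pm\pi/2]$, whose relevant maximal graphical component is $\Lambda_\infty$. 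The limiting tangent at $p_\infty$ is vertical, so $\Lambda_\infty$ is neither the cylinder $\mathcal{C}$ (which has horizontal tangents) nor the plane $\mathcal{P}=\{x=0\}$ (since $p_\infty$ is bounded away from the $r$-axis); hence $\Lambda_\infty\in\underline{\Lambda}$. Because the limit is determined by $p_\infty$ and the fixed vertical direction, no subsequence is needed, and convergence as geodesics is equivalent to $d(\Lambda_t,\Lambda_\infty)\to0$ by the equivalence noted when the topology on $\underline{\Lambda}$ was defined.

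For the identification, suppose $\Lambda_\infty$ is a half-entire graph. Since it is the limit of curves in $\underline{\Lambda}(k,\pm)$, Proposition~\ref{boundaries_by_type} places it in the corresponding boundary set. To prune this list, I would use that smooth convergence on compact subsets transfers the vertical tangents at $p_t$ to a vertical tangent of $\Lambda_\infty$ at $p_\infty$; as each $\Lambda_t$ lies to the left of its right endpoint $p_t$, the limit $\Lambda_\infty$ lies to the left of $p_\infty$, so $p_\infty$ is the \emph{right} endpoint of $\Lambda_\infty$ and is an interior point of $\mathbb{H}$. A half-entire graph with an interior right endpoint must exit on the left, which rules out exactly the families that exit on the right through the $x$-axis or through infinity, namely $\underline{I}^+$, $\underline{O}^+$, $\underline{T}^+$, and the sphere $\mathcal{S}$. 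Intersecting each boundary set of Proposition~\ref{boundaries_by_type} with the surviving ``minus'' families gives $\underline{T}^-$ in case~1, $\underline{I}^-$ in cases~2 and~4, $\underline{O}^-\cup\underline{T}^-$ in case~3, and $\underline{O}^-$ in case~5, which are precisely statements 1--5.

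I expect the main obstacle to be the claim in the second paragraph that $p_\infty$ is genuinely the right endpoint of $\Lambda_\infty$ with a vertical tangent, since the convergence is only smooth on compact subsets and the slopes of $\Lambda_t$ blow up near $p_t$. The danger is that the right endpoints could ``slip off'' toward a boundary exit of the limit (forcing $p_t$ toward the $x$-axis or infinity) or that the family degenerates to $\mathcal{P}$ or $\mathcal{C}$; the compactness hypothesis on $p_t$ is exactly what excludes these possibilities (compare the domain-collapse estimates of Lemma~\ref{lemma:double_converge:slope1} and Lemma~\ref{lemma:double_converge:bounded}), so the careful point is to extract uniform control of the near-endpoint behavior from that hypothesis rather than to invoke it only qualitatively.
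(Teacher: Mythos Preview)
Your proposal is correct and follows essentially the same route the paper intends: the corollary is stated without proof as a direct consequence of Proposition~\ref{boundaries_by_type}, and your argument supplies exactly the missing details by intersecting each boundary set with the half-entire graphs that exit on the left. Your device of anchoring convergence at the right endpoint $p_t$ with its fixed vertical tangent direction (determined by the $\pm$ sign via Lemma~\ref{generic_graph_behavior1.5}) is a clean way to make explicit the convergence step that the paper leaves implicit, and it correctly excludes degeneration to $\mathcal{P}$ or $\mathcal{C}$.
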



\section{Shooting problems}
\label{shooting}

Our construction of immersed self-shrinkers involves the study of two shooting problems for the geodesic equation~(\ref{SSEq}). In one of the shooting problems, we shoot perpendicularly from the $x$-axis and study the geodesics $Q[x_0] = \Gamma[x_0,0,\pi/2]$. In the other shooting problem, we shoot perpendicularly from the $r$-axis and study the geodesics $\Gamma[0,r_0,0]$. In both cases, the goal is to find a geodesic whose $k^{th}$ component is a half-entire graph. The rotation of such a geodesic about the $x$-axis is a self-shrinker. In addition, we note that a geodesic, from one of these shooting problems, whose $k^{th}$ component intersects the $r$-axis perpendicularly also corresponds to a self-shrinker.

In Section~\ref{prelim:gauss_bonnet}, we showed that the boundaries of the sets $\underline{\Lambda}(k, \pm)$ are half entire graphs (see Proposition~\ref{boundaries_by_type} and Corollary~\ref{boundaries_by_type_cor}). It follows that whenever a continuous family of geodesic segments in $\underline{\Lambda}$ changes type, it must move through a half-entire graph. Therefore, we can construct self-shrinkers by finding solutions to the shooting problems whose components eventually have different types. In order to construct infinitely many self-shrinkers in this way, we first establish the asymptotic behavior of geodesics near the plane, the cylinder, and Angenent's torus.


\subsection{Behavior of geodesics near the plane}
\label{shoot_near_plane}

To begin, we consider the continuous family of geodesics $Q[t] = \Gamma[t,0,\pi/2]$ obtained by shooting perpendicularly from the $x$-axis. By Proposition~\ref{half_entire_graph_types}, we know the types of the geodesic graphs $\Lambda[0](Q[t])$, and we are interested in describing the shapes of the graphs $\Lambda[k](Q[t])$ when $t>0$ is small. The following two lemmas are consequences of several results from Section~\ref{prelim}.
\begin{lemma}
\label{shooting_plane_down}
Let $\Gamma = \Gamma[0,r_0,\alpha_0]$ be a geodesic with $r_0 \in (m_1, \sqrt{2(n-1)})$ and $\alpha_0 \in (-\pi/2,0)$. Then $\Lambda[1](\Gamma)$ exists, and  for $\alpha_0$ sufficiently close to $-\pi/2$, we have $\Lambda[1](\Gamma)$ is type $(0,-)$ with $r_{\Lambda[1](\Gamma)} \in (\sqrt{2n}, M_1)$ and $\alpha_{\Lambda[1](\Gamma)} \in (-\pi/2,0)$. Moreover, $\alpha_{\Lambda[1](\Gamma)} \to -\pi/2$ as $\alpha_0 \to -\pi/2$.
\end{lemma}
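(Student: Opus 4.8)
The plan is to pin down the shape of $\Lambda[0](\Gamma)$, deduce that $\Lambda[1](\Gamma)$ exists, and then read off the behaviour of $\Lambda[1](\Gamma)$ as $\alpha_0\to-\pi/2$ by comparing the continuation of $\Gamma$ past its right vertical tangent with the axis-shooting geodesics $Q[x]$, whose first components are the inner-quarter spheres $\underline{I}^+$. Writing $\Lambda[0](\Gamma)$ as the graph of $u$ with $u(0)=r_0$ and $u'(0)=\tan\alpha_0<0$, the hypotheses $m_1<r_0<\sqrt{2(n-1)}$ place us in Lemma~\ref{convexity_of_sub_horizontal_shooting}, so $u$ is strictly convex and $\Lambda[0](\Gamma)$ is type $(0,+)$. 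By Lemma~\ref{generic_graph_behavior1.5} the slope turns positive before the right end $b$, so $u$ has an interior minimum and then rises to a vertical tangent; a maximally extended convex solution with $u'(0)<0$ cannot be a trumpet, so $b<\infty$, Lemma~\ref{generic_graph_behavior2} gives $u(b)<\infty$, and (using $r_0>m_1$ to keep the descending branch off the $x$-axis) the minimum and $u(b)$ are positive. Hence $\Lambda[1](\Gamma)$ exists, and since $\Gamma$ meets $x=b$ with an upward vertical tangent, $\Lambda[1](\Gamma)=\Lambda[0](\Gamma[b,u(b),\pi/2])$.

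For the limit, as $\alpha_0\to-\pi/2$ we have $u'(0)\to-\infty$, so Lemma~\ref{lemma:double_converge:slope1} gives $b\to0$; and by continuity of geodesics (through the smooth family of axis initial value problems for the degenerate metric) $\Gamma$ converges to the plane $\mathcal{P}$, which descends the $r$-axis to the origin and returns. Consequently both the minimum and the first vertical tangent sink to the origin, i.e.\ $(b,u(b))\to(0,0)$ with $u(b)>0$. Continuity of the initial value problem across $\{r=0\}$ then makes $\Gamma[b,u(b),\pi/2]$ close to $Q[b]$, so $\Lambda[1](\Gamma)=\Lambda[0](\Gamma[b,u(b),\pi/2])$ is $C^1$-close to the inner-quarter sphere $I_b=\Lambda[0](Q[b])\in\underline{I}^+$ for $b$ small.

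The three conclusions now follow. By Proposition~\ref{half_entire_graph_types} the curve $I_b$ is type $(0,-)$ and meets the $r$-axis above the sphere with negative slope. Since $\Lambda[1](\Gamma)$ is close to $I_b$ and is concave down at its vertical tangent (there $\alpha=\pi/2$ and $\dot\alpha=b/2>0$) and above the cylinder, it crosses the $r$-axis above the sphere with negative slope and is type $(0,-)$, giving $r_{\Lambda[1](\Gamma)}>\sqrt{2n}$ and $\alpha_{\Lambda[1](\Gamma)}\in(-\pi/2,0)$. The upper bound is Lemma~\ref{r_graph:crossing:above}: writing the part of $\Lambda[1](\Gamma)$ that rises from near the axis to the $r$-axis crossing as a graph $f$ over the $r$-axis, one has $f>0$ and $f'\le0$ for $r\ge\sqrt{2(n-1)}$, so that lemma forces $f<0$ for $r>M_1$, whence $r_{\Lambda[1](\Gamma)}<M_1$. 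Finally, as $b\to0$ the curves $I_b$ converge to the vertical plane, so their slope at the $r$-axis tends to $-\infty$ and $\alpha_{\Lambda[1](\Gamma)}\to-\pi/2$.

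I expect the main obstacle to be the behaviour near the degenerate axis $\{r=0\}$: both $(b,u(b))\to(0,0)$ and the comparison $\Gamma[b,u(b),\pi/2]\to Q[b]$ rest on the smooth extension of the initial value problem across the axis rather than on equation~(\ref{SSEq}) alone. Moreover, since $I_b$ is a boundary curve (a half-entire graph), confirming that $\Lambda[1](\Gamma)$ falls on the type $(0,-)$ side rather than the type $(1,+)$ side near $I_b$ (cf.\ Proposition~\ref{boundaries_by_type}) requires controlling the concavity of $\Lambda[1](\Gamma)$ carefully as it leaves its vertical tangent just below the cylinder. The quantitative bound $M_1$, handled by Lemma~\ref{r_graph:crossing:above}, is the reason that constant appears in the statement.
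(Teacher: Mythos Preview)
Your existence argument for $\Lambda[1](\Gamma)$ is correct and essentially matches the paper: both use Lemma~\ref{convexity_of_sub_horizontal_shooting} to see that $\Lambda[0](\Gamma)$ is strictly convex of type $(0,+)$ with a finite right endpoint $(b,u(b))$ in $\mathbb{H}$.

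The limiting analysis, however, has a real gap. You assert that $(b,u(b))\to(0,0)$ and then compare $\Gamma[b,u(b),\pi/2]$ to the axis geodesic $Q[b]$. Lemma~\ref{lemma:double_converge:slope1} does give $b\to0$, but you have not shown $u(b)\to0$; the sentence ``$\Gamma$ converges to the plane $\mathcal{P}$'' is not a statement about a solution of~(\ref{SSEq}), since the limiting initial direction $\alpha_0=-\pi/2$ is vertical and the corresponding curve is not a geodesic in the sense needed. In fact the paper never claims $u(b)\to0$; it only shows $u(b)<\sqrt{2(n-1)}$ via Lemma~\ref{r_graph:crossing:below}. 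Even granting $u(b)\to0$, your comparison to $Q[b]$ requires uniform control of solutions near the singular axis $\{r=0\}$, and since $b\to0$ as well, the target $I_b$ itself degenerates to the plane, so ``$C^1$-close to $I_b$'' does not by itself pin down $r_{\Lambda[1](\Gamma)}$ or the type. You flag these issues yourself, but they are the heart of the matter and are not resolved.

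The paper avoids the axis entirely. After the vertical tangent it writes $\Lambda[1](\Gamma)$ as an $r$-graph $f$ with $f(u(b))=b$ and $f'(u(b))=0$; only $b\to0$ is needed to see that this initial data approaches that of the plane at a \emph{regular} height, so continuous dependence for~(\ref{r_graph_SSEq}) gives $f\to0$ on compacta. The bound $r_{\Lambda[1](\Gamma)}<M_1$ then comes from Lemma~\ref{r_graph:crossing:above}, the bound $r_{\Lambda[1](\Gamma)}>\sqrt{2n}$ from the sphere comparison of Appendix~A (Lemma~\ref{cp:lemma:1}), and $\alpha_{\Lambda[1](\Gamma)}\to-\pi/2$ is simply $f'\to0$ at the crossing. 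This route uses neither $u(b)\to0$ nor any continuity across the degenerate axis.
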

\begin{proof}
Let $u:(a,b) \to \mathbb{R}$ denote the maximally extended solution to~(\ref{x_graph_SSEq}) whose graph is the geodesic segment $\Lambda[0](\Gamma)$. By assumption $u(0) <\sqrt{2(n-1)}$ and $u'(0) < 0$, and it follows from the work in Section~\ref{prelim} that $b<\infty$ and $0<u(b)<\infty$, and $u$ is convex on $[0,b)$ (see Section~\ref{prelim:defn}, Lemma~\ref{generic_graph_behavior2}, and Lemma~\ref{convexity_of_sub_horizontal_shooting}).  Since $b$ and $u(b)$ are finite, we conclude that $\Lambda[1](\Gamma)$ exists.

When $\alpha_0 \to -\pi/2$, we have $u'(0) \to -\infty$, and it follows from Lemma~\ref{lemma:double_converge:slope1} that $b \to 0$. We note that $\Lambda[0](\Gamma)$ achieves its minimum over $[0,b)$ at an interior point. By the continuity of equation~(\ref{r_graph_SSEq}), this minimum approaches $0$ as $\alpha_0 \to -\pi/2$. Applying Lemma~\ref{r_graph:crossing:below}, we have $u(b) < \sqrt{2(n-1)}$.

Now, let $f$ denote the solution to~(\ref{r_graph_SSEq}) with $f(u(b))=b$ and $f'(u(b))=0$. We note that $f$ is concave down, and again using the continuity of equation~(\ref{r_graph_SSEq}), we see that the domain of $f$ approaches $(0,\infty)$ as $\alpha_0 \to -\pi/2$. Applying Lemma~\ref{r_graph:crossing:above} we conclude that $f$ crosses the $r$-axis below $M_1$, and the slope at this point approaches $0$ as $\alpha_0 \to -\pi/2$. In addition, when $b$ is small, the comparison arguments used in the proof of Lemma~\ref{cp:lemma:1} in the Appendix show that $f$ crosses the sphere at least once in the first quadrant. Also, when $b$ is small, the slope of $f(r)$ may be chosen small for $r \in [u(b),\sqrt{2n}]$ (since $f$ is close to the plane), and we see that $f$ crosses the sphere exactly once in the first quadrant. Therefore, $r_{\Lambda[1](\Gamma)} \in (\sqrt{2n}, M_1)$ and $\alpha_{\Lambda[1](\Gamma)} \to -\pi/2$ as $\alpha_0 \to -\pi/2$.
\end{proof}

\begin{lemma}
\label{shooting_plane_up}
Let $\Gamma = \Gamma[0,r_0,\alpha_0]$ be a geodesic with $r_0 \in (\sqrt{2n}, M_1)$ and $\alpha_0 \in (0,\pi/2)$. Then $\Lambda[1](\Gamma)$ exists, and for $\alpha_0$ sufficiently close to $\pi/2$, we have $\Lambda[1](\Gamma)$ is type $(0,+)$ with $r_{\Lambda[1](\Gamma)} \in (m_1, \sqrt{2(n-1)})$ and $\alpha_{\Lambda[1](\Gamma)} \in (0,\pi/2)$. Moreover, $\alpha_{\Lambda[1](\Gamma)} \to \pi/2$ as $\alpha_0 \to \pi/2$.
\end{lemma}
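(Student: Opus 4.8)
The plan is to follow the proof of Lemma~\ref{shooting_plane_down} almost verbatim, interchanging ``below the cylinder / convex / crossing above the sphere'' with ``above the cylinder / concave down / crossing below the cylinder''; the one genuinely new feature is that the geodesic now shadows the \emph{upward} branch of the plane, which escapes to infinity rather than descending to the axis, so I will invoke Lemma~\ref{lemma:double_converge:slope2} where the downward case used Lemma~\ref{lemma:double_converge:slope1}. First I would fix the shape of $\Lambda[0](\Gamma)$: writing it as the graph of a maximally extended solution $u:(a,b)\to\RR$ with $u(0)=r_0>\sqrt{2(n-1)}$ and $u'(0)=\tan\alpha_0>0$, equation~(\ref{x_graph_SSEq}) gives $u''(0)<0$, so $u$ has a maximum at some $x_1>0$; since $u$ is non-degenerate and is not a trumpet, the results of Section~\ref{prelim} (Theorem 3 of~\cite{KM}, Lemma~\ref{generic_graph_behavior2}, and Lemma~\ref{generic_graph_behavior1.5}) give $b<\infty$ and $0<u(b)<\infty$, so that $\Lambda[1](\Gamma)$ exists.

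Next I would analyze the limit $\alpha_0\to\pi/2$. Viewing the ascending part of $\Lambda[0]$ as a graph $(g(r),r)$ over the $r$-axis with $g(r_0)=0$ and $g'(r_0)=\cot\alpha_0\to 0$, continuity of~(\ref{r_graph_SSEq}) shows it converges to the solution with $g\equiv 0$, namely the plane $\mathcal{P}$. Hence the curve hugs the $r$-axis up to ever greater heights, forcing the maximum $u(x_1)\to\infty$ with $x_1\to 0$; then Lemma~\ref{lemma:double_converge:slope2} gives $b<2x_1\to 0$ together with $u(b)>u(x_1)-2\to\infty$. Thus, for $\alpha_0$ near $\pi/2$, $\Lambda[0]$ is a tall, thin, concave-down spike of type $(0,-)$ whose right endpoint $(b,u(b))$ has $b\to 0$ and $u(b)\to\infty$.

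Finally I would treat $\Lambda[1](\Gamma)$, the arm descending vertically from $(b,u(b))$. Because the whole geodesic converges to $\mathcal{P}$ on compact subsets of $\mathbb{H}$, this arm also converges to the (vertical) plane, so on any fixed height-range its $r$-axis crossing becomes vertical, giving $\alpha_{\Lambda[1](\Gamma)}\to\pi/2$. To locate the crossing I would, exactly as in Lemma~\ref{shooting_plane_down}, record that $\Lambda[1]$ meets the $r$-axis steeply and rises monotonically to the vertical point at height $u(b)\to\infty$; writing the relevant portion as a graph $(f(r),r)$ over the $r$-axis with $f$ vanishing to first order at the top, the comparison arguments of Appendix~A (used in Lemma~\ref{cp:lemma:1}) show that $\Lambda[1]$ meets the cylinder exactly once, so the crossing lies below the cylinder, while Lemma~\ref{r_graph:crossing:below} bounds it above $m_1$; hence $r_{\Lambda[1](\Gamma)}\in(m_1,\sqrt{2(n-1)})$. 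Once the crossing is known to lie below the cylinder with positive slope, Lemma~\ref{convexity_of_sub_horizontal_shooting} and Lemma~\ref{monotonicity_after_minimum} make $\Lambda[1]$ globally convex, i.e.\ of type $(0,+)$.

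I expect the main obstacle to be this last localization of the $r$-axis crossing. In the downward case the arm swings \emph{up} from near the axis and its height is capped by the concave-down estimate of Lemma~\ref{r_graph:crossing:above}; here the arm plunges \emph{down} from height $u(b)\to\infty$, so the delicate points are (i) ruling out a crossing at or above the cylinder, which I would do by the comparison with the cylinder rather than by a naive blow-up argument, since an inflection on the second-quadrant side would otherwise leave $\Lambda[1]$ only of type $(1,+)$, and (ii) checking the sign and concavity of $f$ in the $r$-graph near the crossing so that Lemma~\ref{r_graph:crossing:below} genuinely applies, all while keeping the crossing steep enough to conclude $\alpha_{\Lambda[1](\Gamma)}\to\pi/2$.
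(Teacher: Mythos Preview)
Your overall plan—mirror Lemma~\ref{shooting_plane_down}, swap Lemma~\ref{lemma:double_converge:slope1} for Lemma~\ref{lemma:double_converge:slope2}—is exactly what the paper does, and your first step (existence of the maximum at $x_1$, finiteness of $b$ and $u(b)$, hence existence of $\Lambda[1]$) matches. Two points diverge from the paper.

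First, the assertion $x_1\to 0$ does not follow from convergence of the ascending $r$-graph to the plane on compact sets: that only controls the curve up to any \emph{fixed} height $R$, while the maximum sits at height $u(x_1)\to\infty$, outside every such window. The paper fills this gap with a Gauss--Bonnet argument: the triangle with vertices $(0,\sqrt{2(n-1)})$, $(0,\sqrt{2n})$, $(x_1,u(x_1))$ lies inside a simple region bounded by $\Gamma$ and the $r$-axis, and~(\ref{gauss_bonnet}) then forces $x_1\to 0$ as $u(x_1)\to\infty$. Without this, your invocation of Lemma~\ref{lemma:double_converge:slope2} to get $b<2x_1\to 0$ is unsupported.

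Second, and this is the point you flagged as the main obstacle, the Appendix~A machinery does not transplant to the cylinder. Lemma~\ref{cp:lemma:1} compares two solutions of~(\ref{cp:eqn:1}) sharing $f'(0)=g'(0)=0$, with $g(r)=\sqrt{2n-r^2}$ the sphere; the cylinder $r\equiv\sqrt{2(n-1)}$ is not a graph over the $r$-axis at all, so there is no function $g$ against which to run the $v=f/g$ argument. The paper does \emph{not} use Appendix~A here. Instead it takes the $r$-graph $f$ of $\Lambda[1]$ (with $f(u(b))=b$, $f'(u(b))=0$, $f$ concave down), reads off $f(\sqrt{2n})<b$ and, from $f''<0$ at $r=\sqrt{2n}$ in~(\ref{r_graph_SSEq}), the bound $f'(\sqrt{2n})<\sqrt{n/2}\,f(\sqrt{2n})$; since $b\to 0$, continuity of~(\ref{r_graph_SSEq}) at this fixed height forces $f$ to converge to the plane and its maximal domain to approach $(0,\infty)$. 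Then Lemma~\ref{r_graph:crossing:below} locates the zero of $f$ above $m_1$ with slope tending to $0$, giving $\alpha_{\Lambda[1](\Gamma)}\to\pi/2$. The paper also extracts the type $(0,+)$ conclusion directly from Part~1 of Lemma~\ref{lemma:double_converge:slope2} (via the concavity of $f$), rather than deducing it afterwards from Lemma~\ref{convexity_of_sub_horizontal_shooting} once the crossing is already placed below the cylinder; your ordering would make that last step circular.
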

\begin{proof}
Let $u:(a,b) \to \mathbb{R}$ denote the maximally extended solution to~(\ref{x_graph_SSEq}) whose graph is the geodesic segment $\Lambda[0](\Gamma)$. By assumption $u(0) > \sqrt{2n}$ and $u'(0) > 0$, and it follows from the work in Section~\ref{prelim} that $b<\infty$, $0<u(b)<\infty$ (see Section~\ref{prelim:defn} and Lemma~\ref{generic_graph_behavior2}). Since $b$ and $u(b)$ are finite, we conclude that $\Lambda[1](\Gamma)$ exists. In addition, using the sinusoidal shape of $u$, we note that $u$ achieves a local maximum at some point $x_1>0$.

When $\alpha_0 \to \pi/2$, we have $u'(0) \to \infty$, and it follows from the continuity of equation~(\ref{r_graph_SSEq}), that $u(x_1) \to \infty$. Using Part 1 in the proof of Lemma~\ref{lemma:double_converge:slope2}, we have $\Lambda[1](\Gamma)$ is type $(0,+)$ and $u(b) > u(x_1) -2$ for $\alpha_0$ sufficiently close to $\pi/2$. The triangle with vertices $(0,\sqrt{2(n-1)})$, $(0,\sqrt{2n})$, and $(x_1,u(x_1))$ is contained in a simple region bounded by $\Gamma$ and the $r$-axis, and it follows from the Gauss-Bonnet formula~(\ref{gauss_bonnet}) that $x_1 \to 0$ as $u(x_1) \to \infty$. Applying Lemma~\ref{lemma:double_converge:slope2}, we see that $b \to 0$ as $\alpha_0 \to \pi/2$.

Now, let $f$ denote the solution to~(\ref{r_graph_SSEq}) with $f(u(b))=b$ and $f'(u(b))=0$. We note that $f$ is concave down so that $f'(\sqrt{2n})>0$, and using equation~(\ref{x_graph_SSEq}) we have $f'(\sqrt{2n}) < \sqrt{n/2}f(\sqrt{2n})$. Then using $f(\sqrt{2n}) < b$ and the continuity of equation~(\ref{r_graph_SSEq}), at the point $r= \sqrt{2n}$, we see that the domain of $f$ approaches $(0,\infty)$ as $\alpha_0 \to \pi/2$. Applying Lemma~\ref{r_graph:crossing:below} we conclude that $f$ crosses the $r$-axis above $m_1$, and the slope at this point approaches $0$ as $\alpha_0 \to \pi/2$. Therefore, $r_{\Lambda[1](\Gamma)} \in (m_1, \sqrt{2n})$, and $\alpha_{\Lambda[1](\Gamma)} \to \pi/2$ as $\alpha_0 \to \pi/2$.
\end{proof}

Now, we can describe the asymptotic behavior of the geodesics $Q[t]$ near the cylinder.
\begin{proposition}
\label{asymptotic_near_plane}
For each $N>0$, there exists $\varepsilon>0$ so that whenever $0<t<\varepsilon$, the geodesic segment $\Lambda[k](Q[t])$ exists for $0 \leq k \leq N$. Moreover, $\Lambda[k](Q[t])$ is type $(0,-)$ when $k$ is even, and $\Lambda[k](Q[t])$ is type $(0,-)$ when $k$ is odd.
\end{proposition}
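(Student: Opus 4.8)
The plan is to prove the proposition by a finite induction on the component index $k$, using Lemma~\ref{shooting_plane_down} and Lemma~\ref{shooting_plane_up} as the inductive step. Throughout I track the $r$-axis crossing data $(\rho_k, \beta_k) := (r_{\Lambda[k](Q[t])}, \alpha_{\Lambda[k](Q[t])})$, and I carry the following invariant: for $t$ small enough (depending on $N$), $\Lambda[k](Q[t])$ exists, and when $k$ is even it is type $(0,-)$ and crosses the $r$-axis above the sphere with $\rho_k \in (\sqrt{2n}, M_1)$ and $\beta_k$ near $-\pi/2$, while when $k$ is odd it is type $(0,+)$ and crosses below the cylinder with $\rho_k \in (m_1, \sqrt{2(n-1)})$ and $\beta_k$ near $-\pi/2$. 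Note that the type necessarily alternates: the sign of $u''$ flips each time the geodesic passes through the vertical tangent joining consecutive graphical components (the upward normal reverses as the tangent passes through vertical), so the odd-indexed segments are in fact type $(0,+)$.

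For the base case $k=0$, Proposition~\ref{half_entire_graph_types} gives that $\Lambda[0](Q[t]) = I_t$ is an inner-quarter sphere, hence type $(0,-)$, and Proposition~\ref{prop:q_sphere_intersect} gives $\rho_0 > \sqrt{2n}$ and $\beta_0 < 0$ for $0 < t < \sqrt{2n}$. What remains is the asymptotic statement that $\beta_0 \to -\pi/2$ and $\rho_0 \in (\sqrt{2n}, M_1)$ as $t \to 0$. I would extract this from the shape analysis of Section~\ref{prelim:x_graph} together with a limiting argument as $t \to 0$: the endpoint $(t,0)$ of $I_t$ tends to the origin while its $r$-axis crossing remains at height $\rho_0 > \sqrt{2n}$, so $I_t$ cannot converge to a nondegenerate arch, and this forces $\beta_0 \to -\pi/2$, while the Gauss--Bonnet bound in Lemma~\ref{lemma:double_converge:bounded} rules out $\rho_0 \geq M_1$. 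These base-case asymptotics for $Q[x_0]$ are also implicit in~\cite{D}.

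For the inductive step I use the reflection $x \mapsto -x$, which preserves equation~(\ref{x_graph_SSEq}) and the type of a degree-$0$ segment while fixing $\rho_k$ and sending $\beta_k \mapsto -\beta_k$. If $k$ is even, then $\Lambda[k](Q[t])$ crosses above the sphere with $\beta_k$ near $-\pi/2$; reflecting places us in the hypotheses of Lemma~\ref{shooting_plane_up} with $\alpha_0 = -\beta_k$ near $\pi/2$, and reflecting the conclusion back shows $\Lambda[k+1](Q[t])$ is type $(0,+)$, crosses below the cylinder with $\rho_{k+1} \in (m_1, \sqrt{2(n-1)})$, and has $\beta_{k+1} \to -\pi/2$. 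If $k$ is odd, then $\Lambda[k](Q[t])$ crosses below the cylinder with $\beta_k$ near $-\pi/2$, so Lemma~\ref{shooting_plane_down} applies directly and yields $\Lambda[k+1](Q[t])$ of type $(0,-)$ crossing above the sphere with $\rho_{k+1} \in (\sqrt{2n}, M_1)$ and $\beta_{k+1} \to -\pi/2$. Existence of the next segment is part of each lemma's conclusion together with Lemma~\ref{generic_graph_behavior2}, and a short orientation check---using the convention from Proposition~\ref{double_convergence_to_boundary} that for even $k$ the right endpoint of $\Lambda[k]$ is shared with $\Lambda[k+1]$---confirms that the segment produced by the lemma is $\Lambda[k+1](Q[t])$ rather than $\Lambda[k-1](Q[t])$.

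The main obstacle is quantitative rather than geometric: each lemma only delivers its conclusion once $\beta_k$ is sufficiently close to $-\pi/2$, and the required closeness could in principle tighten at every step. To control this over the first $N$ steps I argue backwards. The final step needs $\beta_{N-1}$ within some tolerance of $-\pi/2$; the ``$\beta_{k+1} \to -\pi/2$ as $\beta_k \to -\pi/2$'' statements let me pull this back to a tolerance at level $N-2$, and so on down to a single tolerance $\delta_0 > 0$ at level $0$. Since $N$ is finite this backward selection terminates, and the base-case asymptotics then furnish $\varepsilon > 0$ so that $0 < t < \varepsilon$ forces $\beta_0$ within $\delta_0$ of $-\pi/2$ and $\rho_0 \in (\sqrt{2n}, M_1)$. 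Running the induction forward produces $\Lambda[0](Q[t]), \dots, \Lambda[N](Q[t])$ with the asserted types, completing the proof.
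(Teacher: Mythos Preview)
Your proposal is correct and follows essentially the same route as the paper: both argue by finite induction on $k$, alternately applying Lemma~\ref{shooting_plane_down} and Lemma~\ref{shooting_plane_up} while tracking the $r$-axis crossing data $(r_{\Lambda[k]},\alpha_{\Lambda[k]})$ and the limit $\alpha_{\Lambda[k]}\to -\pi/2$. You are more explicit than the paper on two points it leaves implicit---the reflection $x\mapsto -x$ needed to put the crossing data into the sign range of the relevant lemma, and the backward selection of tolerances over finitely many steps---and you correctly flag that the odd-$k$ segments are type $(0,+)$, which is evidently a typo in the statement.
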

\begin{proof}
We know that $\Lambda[0](Q[t])$ is type $(0,-)$ when $t< \sqrt{2n}$. We also know that $r_{\Lambda[0](Q[t])} \in (\sqrt{2n}, M_1)$ and $\alpha_{\Lambda[0](Q[t])} \in (-\pi/2,0)$. By continuity, we have $\alpha_{\Lambda[0](Q[t])} \to -\pi/2$ as $t \to 0$. Then, applying Lemma~\ref{shooting_plane_up}, we see that $\Lambda[1](Q[t])$ exists, and for $t$ sufficiently close to $0$, we have $\Lambda[1](Q[t])$ is type $(0,+)$ with $r_{\Lambda[1](Q[t])} \in (m_1, \sqrt{2(n-1)})$ and $\alpha_{\Lambda[1](Q[t])} \in (-\pi/2,0)$. Moreover, $\alpha_{\Lambda[1](Q[t])} \to -\pi/2$ as $t \to 0$. Applying Lemma~\ref{shooting_plane_down} to $\Lambda[1](Q[t])$ shows that $\Lambda[2](Q[t])$ exists, and  for $t$ sufficiently close to $0$, we have $\Lambda[2](Q[t])$ is type $(0,-)$ with $r_{\Lambda[2](Q[t])} \in (\sqrt{2n}, M_1)$ and $\alpha_{\Lambda[2](Q[t])} \in (-\pi/2,0)$. Moreover, $\alpha_{\Lambda[2](Q[t])} \to -\pi/2$ as $t \to 0$. The proposition follows from repeated applications of Lemma~\ref{shooting_plane_up} and Lemma~\ref{shooting_plane_down}.
\end{proof}


\subsection{Behavior of geodesics near the cylinder}
\label{shoot_near_cylinder}

Next, we study the continuous family of geodesics $\Gamma_t = \Gamma[0,t,0]$ obtained by shooting perpendicularly from the $r$-axis. By Lemma~\ref{shooting_horizontal_below_cylinder_is_degree_zero} we know that $\Lambda[0](\Gamma_t)$ is type $(0,+)$ when $t<\sqrt{2(n-1)}$. The following lemma about geodesics near the cylinder will be used to describe the shape of $\Gamma_t$ when $t$ is close to $\sqrt{2(n-1)}$.
\begin{lemma}
\label{shooting_cylinder_flat}
Let $\Gamma = \Gamma[0,r_0,\alpha_0]$ with $r_0 < \sqrt{2n}$ and $\alpha_0 \in (-\pi/2,0)$. Then $\Lambda[1](\Gamma)$ exists, and for $(r_0,\alpha_0)$ sufficiently close to $(\sqrt{2(n-1)},0)$, the geodesic segment $\Lambda[1](\Gamma)$ is type $(1,-)$ with $\alpha_{\Lambda[1](\Gamma)} \in (0, \pi/2)$. Moreover, $\Lambda[1](\Gamma)$ converges to the cylinder as $(r_0,\alpha_0) \to (\sqrt{2(n-1)},0)$.
\end{lemma}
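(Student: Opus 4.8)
The plan is to determine the shape of $\Lambda[0](\Gamma)$, extract the convergence to the cylinder $\mathcal{C}$ from continuous dependence on initial data together with Proposition~\ref{double_convergence_to_boundary}, and then read off the type and the sign of $\alpha_{\Lambda[1](\Gamma)}$ from the sign of $\dot\alpha$ at the terminal vertical tangent of $\Lambda[0](\Gamma)$.

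First I would write $\Lambda[0](\Gamma)$ as the maximal solution $u:(a,b)\to\RR$ of~(\ref{x_graph_SSEq}) with $u(0)=r_0$ and $u'(0)=\tan\alpha_0<0$. Since $\Gamma$ is neither the cylinder nor a trumpet (a trumpet crosses the $r$-axis with positive slope), $u$ is non-degenerate with $b<\infty$, and Lemma~\ref{generic_graph_behavior2} gives $0<u(b)<\infty$. When $r_0<\sqrt{2(n-1)}$, Lemma~\ref{convexity_of_sub_horizontal_shooting} shows $u$ is strictly convex; when $\sqrt{2(n-1)}\le r_0<\sqrt{2n}$, Lemma~\ref{almost_convexity_of_sub_horizontal_shooting} shows $u$ is concave down, then convex after a single inflection and a minimum. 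In either case Lemma~\ref{generic_graph_behavior1.5} gives $u',u''>0$ near $b$, so $\Lambda[0](\Gamma)$ terminates in an upward vertical tangent at $(b,u(b))$ and the geodesic continues; hence $\Lambda[1](\Gamma)$ exists. For the convergence statement, continuous dependence on initial data shows $\Gamma=\Gamma[0,r_0,\alpha_0]$ converges smoothly on compact subsets of $\mathbb{H}$ to $\Gamma[0,\sqrt{2(n-1)},0]=\mathcal{C}$ as $(r_0,\alpha_0)\to(\sqrt{2(n-1)},0)$, so $\Lambda[0](\Gamma)\to\mathcal{C}$. Applying Proposition~\ref{double_convergence_to_boundary} in the cylinder case with $k=0$—noting that the right endpoint of $\Lambda[0](\Gamma)$ is the shared right endpoint of $\Lambda[1](\Gamma)$—yields $\Lambda[1](\Gamma)\to\mathcal{C}$, and in particular $r_{\Lambda[1](\Gamma)}\to\sqrt{2(n-1)}$ and $\alpha_{\Lambda[1](\Gamma)}\to 0$.

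To pin down the type, I would evaluate~(\ref{SSEq}) at the vertical tangent $(b,u(b))$, where $\alpha=\pi/2$: there $\dot\alpha=b/2>0$, so $\alpha$ increases through $\pi/2$ and $\Lambda[1](\Gamma)$ bends back over the top, rising to an apex where $\alpha=\pi$ and then descending to the left. If the apex lies in the first quadrant, then $\Lambda[1](\Gamma)$ meets the $r$-axis on its descending branch, where $\alpha\in(\pi,3\pi/2)$ so the graphical slope $\tan\alpha>0$, giving $\alpha_{\Lambda[1](\Gamma)}\in(0,\pi/2)$. Near $b$ the segment is the continuation of the convex $\Lambda[0](\Gamma)$ past a vertical tangent and is therefore concave down, while Lemma~\ref{monotonicity_after_minimum} forces convexity once the descending branch falls below the cylinder; combined with the degree bound of Proposition~\ref{graphical_geodesic_degree_bound} (degree $2$ occurs only for type $(2,+)$), this produces exactly one curvature zero, so $\Lambda[1](\Gamma)$ is type $(1,-)$.

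The main obstacle is the claim that the apex of $\Lambda[1](\Gamma)$ lies in the first quadrant, equivalently that $\Lambda[1](\Gamma)$ does not cross the $r$-axis already on its ascending branch, where $\alpha\in(\pi/2,\pi)$ would give a negative slope and the wrong sign. I would control this through the reflection symmetry $(x,\alpha)\mapsto(-x,\pi-\alpha)$ of~(\ref{SSEq}): in the symmetric case $\alpha_0=0$, $r_0<\sqrt{2(n-1)}$, the segment $\Lambda[0](\Gamma)$ is the strictly convex arc of Lemma~\ref{shooting_horizontal_below_cylinder_is_degree_zero}, symmetric about the $r$-axis, and the continuation closes into a symmetric loop whose top—the apex of $\Lambda[1](\Gamma)$—sits exactly on the $r$-axis with $\alpha_{\Lambda[1](\Gamma)}=0$. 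Perturbing to $\alpha_0<0$ then shifts the apex to $x_{\mathrm{apex}}>0$, placing the $r$-axis crossing just past the apex on the descending branch. Making this perturbation quantitative—bounding the horizontal width of the over-the-top turn by $O(1)$ as $b\to\infty$, in the spirit of the symmetrization and Gauss--Bonnet estimates of Lemma~\ref{lemma:double_converge:slope2}, so that $x_{\mathrm{apex}}=b-O(1)>0$ once $(r_0,\alpha_0)$ is close enough to $(\sqrt{2(n-1)},0)$—is the principal technical step; verifying the single inflection rather than merely the endpoint concavities is the remaining point, and it follows from the convexity lemmas of Section~\ref{prelim:x_graph}.
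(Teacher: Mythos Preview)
Your arguments for the existence of $\Lambda[1](\Gamma)$ and for its convergence to the cylinder match the paper's. The gap is in the sign of $\alpha_{\Lambda[1](\Gamma)}$.

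You attempt to locate the apex of $\Lambda[1](\Gamma)$ in the first quadrant via a perturbation from the symmetric shooting problem and a width estimate, and you rightly flag this as the principal unfinished step. The obstacle is genuine: Lemma~\ref{lemma:double_converge:slope2} provides a height bound on the over-the-top turn, not a horizontal width bound, and its Part~2 already assumes $x_1>0$; nor is it clear that $u(b)$ stays bounded as $b\to\infty$, so the turn could a priori be wide.

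The paper bypasses this entirely with an argument you already have the ingredients for. You note that $\Lambda[1](\Gamma)$ is concave down near the shared right endpoint $b$ (being the continuation of a convex arc past a vertical tangent), i.e.\ it is type $(k_1,-)$. Now suppose $\alpha_{\Lambda[1](\Gamma)}\le 0$. Since $r_{\Lambda[1](\Gamma)}\to\sqrt{2(n-1)}<\sqrt{2n}$, for parameters close enough to $(\sqrt{2(n-1)},0)$ the graph of $\Lambda[1](\Gamma)$ on $[0,b)$ falls under Lemma~\ref{convexity_of_sub_horizontal_shooting} or Lemma~\ref{almost_convexity_of_sub_horizontal_shooting} (and, in the borderline perpendicular case with $r_{\Lambda[1]}$ between cylinder and sphere, the paper observes such a segment is type $(2,+)$). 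Each of these forces $v''>0$ near the right endpoint, contradicting type $(k_1,-)$. Hence $\alpha_{\Lambda[1](\Gamma)}\in(0,\pi/2)$ with no apex-chasing required. The same convexity lemmas, reflected to the second quadrant, then give $v''>0$ near the left endpoint, so exactly one inflection occurs and the type is $(1,-)$.

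A smaller point: your appeal to Lemma~\ref{monotonicity_after_minimum} on the descending branch is misapplied---that lemma is stated for minima at $x_0>0$, while any minimum of $\Lambda[1](\Gamma)$ lies in the second quadrant. The reflected version is what is needed, and it is exactly what closes the type argument above.
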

\begin{proof}
By the work in Section~\ref{prelim}, we know that $\Lambda[0](\Gamma)$ is type $(k_0,+)$ and it has a finite right end point. Therefore, $\Lambda[1](\Gamma)$ exists and has type $(k_1,-)$. Applying Proposition~\ref{double_convergence_to_boundary}, we see that $\Lambda[1](\Gamma)$ converges to the cylinder as $(r_0,\alpha_0) \to (\sqrt{2(n-1)},0)$. In particular, $r_{\Lambda[1](\Gamma)} \to \sqrt{2(n-1)}$. Using Lemma~\ref{almost_convexity_of_sub_horizontal_shooting} and the continuity of geodesics, we observe that a maximally extended graphical geodesic segment, which intersects the $r$-axis perpendicularly between the sphere and the cylinder, is type $(2,+)$. Combining this observation with the work in Section~\ref{prelim:x_graph} shows that $\alpha_{\Lambda[1](\Gamma)} \in (0, \pi/2)$, and consequently $\Lambda[1](\Gamma)$ is type $(1,-)$.
\end{proof}

Now, we can describe the asymptotic behavior of the geodesics $\Gamma[0,r_0,0]$ near the cylinder.
\begin{proposition}
\label{asymptotic_near_cylinder}
Let $\Gamma_t = \Gamma[0,t,0]$. For each $N>0$, there exists $\varepsilon>0$ so that whenever $\sqrt{2(n-1)} - \varepsilon < t < \sqrt{2(n-1)}$, the geodesic segment $\Lambda[k](\Gamma_t)$ exists for $0 \leq k \leq N$. Moreover, $\Lambda[k](\Gamma_t)$ is type $(1,-)$ when $k$ is odd, and $\Lambda[k](\Gamma_t)$ is type $(1,+)$ when $k \geq 2$ is even.
\end{proposition}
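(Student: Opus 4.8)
The plan is to run an induction on $k$ that parallels the proof of Proposition~\ref{asymptotic_near_plane}, with Lemma~\ref{shooting_cylinder_flat} (together with its mirror image under the reflection $x \mapsto -x$) playing the role that Lemmas~\ref{shooting_plane_down} and~\ref{shooting_plane_up} played near the plane. The feature I will lean on throughout is the last sentence of Lemma~\ref{shooting_cylinder_flat}: passing from one graphical component to the next drives the crossing datum toward $(\sqrt{2(n-1)},0)$, so that once a component is close to the cylinder, the next one is close as well and the lemma applies again. Two structural observations set this up. Equation~(\ref{x_graph_SSEq}) is invariant under $x \mapsto -x$ (with $u' \mapsto -u'$), so reflecting a solution across the $r$-axis yields another solution; hence the mirror image of Lemma~\ref{shooting_cylinder_flat} holds verbatim with $\alpha_0 \in (0,\pi/2)$ in place of $\alpha_0 \in (-\pi/2,0)$, producing a component of type $(1,+)$ with $\alpha_{\Lambda[1]} \in (-\pi/2,0)$ and again converging to the cylinder. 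Moreover, since the initial data of $\Gamma_t = \Gamma[0,t,0]$ is symmetric under this reflection, the components $\Lambda[-k](\Gamma_t)$ are the mirror images of the $\Lambda[k](\Gamma_t)$, so it suffices to treat $k \geq 0$.

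For the base case, Lemma~\ref{shooting_horizontal_below_cylinder_is_degree_zero} gives that $\Lambda[0](\Gamma_t)$ is type $(0,+)$ whenever $t < \sqrt{2(n-1)}$, and its crossing datum $(t,0)$ tends to $(\sqrt{2(n-1)},0)$ as $t \to \sqrt{2(n-1)}$; indeed, from~(\ref{x_graph_SSEq}) one checks that $u''(0) \to 0^+$, so the convex profile flattens onto the cylinder. Applying Lemma~\ref{shooting_cylinder_flat} — the value $\alpha_0 = 0$ being the symmetric boundary case, reached from $\alpha_0 \in (-\pi/2,0)$ by continuity — shows that $\Lambda[1](\Gamma_t)$ exists, is type $(1,-)$ with $\alpha_{\Lambda[1]} \in (0,\pi/2)$, and converges to the cylinder, so its crossing datum tends to $(\sqrt{2(n-1)},0)$.

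For the inductive step, suppose $\Lambda[k](\Gamma_t)$ exists, has the asserted type, and has crossing datum in a prescribed neighborhood of $(\sqrt{2(n-1)},0)$, with $\alpha_{\Lambda[k]} > 0$ for odd $k$ and $\alpha_{\Lambda[k]} < 0$ for even $k \geq 2$. Reparametrizing via $\Lambda[k](\Gamma_t) = \Lambda[0](\Gamma[0,r_{\Lambda[k]},\alpha_{\Lambda[k]}])$ and applying Lemma~\ref{shooting_cylinder_flat} when $\alpha_{\Lambda[k]} < 0$, or its reflection when $\alpha_{\Lambda[k]} > 0$, produces $\Lambda[k+1](\Gamma_t)$ of the opposite concavity type — type $(1,+)$ at the next even index and type $(1,-)$ at the next odd index — with the sign of the crossing slope reversed and its crossing datum again close to $(\sqrt{2(n-1)},0)$. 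This reproduces the alternating pattern and advances the induction, the one point requiring care being that the reparametrized ``$\Lambda[1]$'' is matched with $\Lambda[k+1](\Gamma_t)$ in the correct direction, which is exactly where the reflection symmetry is used.

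Finally, because $N$ is fixed the induction is finite: each of the at most $N$ applications of Lemma~\ref{shooting_cylinder_flat} demands that the incoming crossing datum lie in some neighborhood $U_k$ of $(\sqrt{2(n-1)},0)$, while the lemma guarantees the outgoing datum is close to $(\sqrt{2(n-1)},0)$ in the limit. Running these requirements backward from $k = N$ to $k = 0$ selects a single neighborhood, hence an $\varepsilon > 0$, for which all $N$ steps succeed whenever $\sqrt{2(n-1)} - \varepsilon < t < \sqrt{2(n-1)}$. The main obstacle is precisely this propagation of smallness: Lemma~\ref{shooting_cylinder_flat} only asserts convergence to the cylinder in the limit, so to iterate I must use the quantitative version stating that a sufficiently small incoming neighborhood forces an arbitrarily small outgoing one. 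This follows from the continuous dependence of geodesics on their crossing data (as exploited in Proposition~\ref{double_convergence_to_boundary}), but the nesting of the finitely many neighborhoods $U_0 \supset \cdots \supset U_N$ must be arranged so that no closeness is lost along the chain.
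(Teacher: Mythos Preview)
Your proposal is correct and follows essentially the same approach as the paper. The paper's proof is a two-sentence sketch invoking Lemma~\ref{shooting_horizontal_below_cylinder_is_degree_zero} for the base case and then ``repeated applications of Lemma~\ref{shooting_cylinder_flat}'' for the induction; you have unpacked exactly what those repeated applications entail, making explicit the reflection symmetry $x \mapsto -x$ needed to alternate between $(1,-)$ and $(1,+)$ types and the backward nesting of neighborhoods required to extract a single $\varepsilon$ for all $N$ steps.
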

\begin{proof}
By Lemma~\ref{shooting_horizontal_below_cylinder_is_degree_zero}, $\Lambda[0](\Gamma_t)$ is type $(0,+)$. Arguing as in the proof of Lemma~\ref{shooting_cylinder_flat}, we see that $\Lambda[1](\Gamma_t)$ exists, and for $t$ sufficiently close to $\sqrt{2(n-1)}$, the geodesic segment $\Lambda[1](\Gamma_t)$ is type $(1,-)$ with $\alpha_{\Lambda[1](\Gamma_t)} \in (0, \pi/2)$. Moreover, $\Lambda[1](\Gamma_t)$ converges to the cylinder as $t \to \sqrt{2(n-1)}$. The proposition follows from repeated applications of Lemma~\ref{shooting_cylinder_flat}.
\end{proof}


\subsection{Behavior of geodesics near Angenent's torus}
\label{shoot_near_torus}

We continue the study of the geodesics $\Gamma_t = \Gamma[0,t,0]$ by illustrating two procedures for constructing self-shrinkers. We prove the result due to Angenent~\cite{A} that there is an embedded torus self-shrinker, and we also prove the result from~\cite{D} that there is an immersed sphere self-shrinker.

Consider the geodesics $\Gamma_t = \Gamma[0,t,0]$, where $t<\sqrt{2(n-1)}$. From Lemma~\ref{shooting_horizontal_below_cylinder_is_degree_zero} we know that $\Lambda[0](\Gamma_t)$ is type $(0,+)$, and from the work in Section~\ref{prelim}, we know that $\Lambda[1](\Gamma_t)$ exists. Proposition~\ref{asymptotic_near_cylinder} tells us that $\Lambda[1](\Gamma_t)$ is type $(1,-)$ when $t$ is close to $\sqrt{2(n-1)}$. Moreover, $\Lambda[1](\Gamma_t)$ has a local maximum in the first quadrant. When $t$ is close to $0$, it follows from the proof of Lemma~\ref{shooting_plane_down} that $\Lambda[1](\Gamma_t)$ is type $(0,-)$ with a local maximum in the second quadrant.

There are two notable differences in the geodesics $\Lambda[1](\Gamma_t)$ when $t$ is close to $\sqrt{2(n-1)}$ and when $t$ is close to $0$. One difference is the location of the local maximum, and the other difference is the curve type. As $t$ decreasees from $\sqrt{2(n-1)}$ to $0$, there is a first initial height $t = r_{Ang}$ for which the local maximum of $\Lambda[1](\Gamma_t)$ intersecsts the $r$-axis. (More rigorously, let $r_{Ang}$ denote the infimum of the set of $r<\sqrt{2(n-1)}$ with the property that for $t>r$, the maximum of $\Lambda[1](\Gamma_t)$ occurs in the first quadrant.) Then $r_{Ang} > 0$, and consequently $\Gamma_{r_{Ang}}$ is a closed geodesic whose rotation about the $x$-axis is an embedded torus self-shrinker. We will refer to $\Gamma_{r_{Ang}}$ as Angenent's torus.

Notice that $\Lambda[1](\Gamma_{r_{Ang}})$ is type $(0,-)$. In particualr, as $t$ decreases from $\sqrt{2(n-1)}$ to $r_{Ang}$, the geodesic segments $\Lambda[1](\Gamma_t)$ change type. Therefore, $\Lambda[1](\Gamma_t)$ must be in $\partial \underline{\Lambda}(1,-)$ for some $t < \sqrt{2(n-1)}$. By continuity, the right end points of $\Lambda[1](\Gamma_t)$ remain bounded away from the $r$-axis in a compact subset of $\mathbb{H}$ when $t$ is between, say, $\sqrt{2(n-1)} - \varepsilon$ and $r_{Ang}$, and applying Corollary~\ref{boundaries_by_type_cor} we see that there is $r_1 > r_{Ang}$ so that $\Lambda[1](\Gamma_{r_1}) \in \underline{I}^-$. The rotation of the geodesic $\Gamma_{r_1}$ about the $x$-axis is an immersed sphere self-shrinker.

We end this section with a description of the behavior of geodesics near Angenet's torus $\Gamma_{r_{Ang}}$.
\begin{proposition}
\label{asymptotic_near_torus}
Let $\Gamma_t = \Gamma[0,t,0]$. For each $N>0$, there exists $\varepsilon>0$ so that whenever $r_{Ang}< t < r_{Ang} + \varepsilon$, the geodesic segment $\Lambda[k](\Gamma_t)$ exists for $0 \leq k \leq N$. Moreover, $\Lambda[k](\Gamma_t)$ is type $(0,+)$ when $k$ is even and $\Lambda[k](\Gamma_t)$ is type $(0,-)$ when $k$ is odd.
\end{proposition}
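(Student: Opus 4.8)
The plan is to exploit the fact that $\Gamma_{r_{Ang}}$ is a \emph{closed} convex geodesic lying in the interior of $\mathbb{H}$, and to show that the nearby geodesics $\Gamma_t$, for $t$ slightly above $r_{Ang}$, shadow it for as many $\Lambda$-segments as we wish. The argument is parallel in spirit to the proofs of Proposition~\ref{asymptotic_near_plane} and Proposition~\ref{asymptotic_near_cylinder}, but here the limiting object is a compact geodesic rather than the plane or the cylinder, so the continuous dependence of solutions of~(\ref{SSEq}) on their initial data does essentially all of the work, and the delicate axis/infinity lemmas are not needed.

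First I would record the relevant structure of Angenent's torus. Since $\Gamma_{r_{Ang}}$ is a closed convex curve, symmetric about the $r$-axis, its Euclidean curvature (equivalently $\dot{\alpha}$) is of one sign and hence bounded below by some $c>0$ along the whole curve; in particular it has no inflection points. Its only vertical-tangent points (where $\cos\alpha=0$) are its leftmost and rightmost points $P_L,P_R$, and these, together with the bottom point $P_B=(0,r_{Ang})$ and top point $P_T=(0,R)$ on the $r$-axis, are bounded away from both the $x$-axis and infinity. Reading off the maximal-graph decomposition starting from $P_B$, the segment $\Lambda[k](\Gamma_{r_{Ang}})$ is the lower (convex) arc when $k$ is even and the upper (concave) arc when $k$ is odd; that is, the decomposition is $2$-periodic, with $\Lambda[k](\Gamma_{r_{Ang}})$ of type $(0,+)$ for even $k$ and type $(0,-)$ for odd $k$. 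This is consistent with the already-noted fact that $\Lambda[1](\Gamma_{r_{Ang}})$ is type $(0,-)$.

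Next, fix $N$. Let $S$ be an arclength long enough for the parametrization of $\Gamma_{r_{Ang}}$ to traverse the segments $\Lambda[0],\dots,\Lambda[N]$; since the curve is closed, $S<\infty$. By continuous dependence of the solution of~(\ref{SSEq}) on its initial data $(0,t,0)\to(0,r_{Ang},0)$, the geodesics $\Gamma_t$ converge to $\Gamma_{r_{Ang}}$ in $C^1$ uniformly on $[0,S]$ as $t\to r_{Ang}^+$. I would use this convergence in two ways. First, the Euclidean curvature of $\Gamma_t$ stays $\geq c/2>0$ on $[0,S]$, so $\alpha$ remains strictly increasing and $\Gamma_t$ has no inflection points there; hence every graphical segment of $\Gamma_t$ in this range has degree $0$ (using $deg(\Lambda)\leq 2$ from Proposition~\ref{graphical_geodesic_degree_bound} to pin the degree down). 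Second, because $\dot{\alpha}>0$ at the vertical points of $\Gamma_{r_{Ang}}$, the equation $\cos\alpha=0$ is solved transversally, so for $t$ close to $r_{Ang}$ the angle $\alpha_t$ crosses each odd multiple of $\pi/2$ exactly once, near the corresponding crossing for $\Gamma_{r_{Ang}}$. Consequently $\Gamma_t$ decomposes into segments $\Lambda[0](\Gamma_t),\dots,\Lambda[N](\Gamma_t)$ (so these all exist), each $C^1$-close to the matching torus segment and each of degree $0$; the sign of $u''$ near the right endpoint is inherited from the torus segment, giving type $(0,+)$ for even $k$ and type $(0,-)$ for odd $k$. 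Choosing $\varepsilon$ small enough that all of these conclusions hold on $[0,S]$ completes the proof.

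The main obstacle is the bookkeeping at the transitions between successive graphical segments, where the representation $(x,u(x))$ degenerates as the slope blows up, so one cannot argue directly via $C^1$-convergence of the $u$'s near the endpoints. I would sidestep this by working throughout with the arclength/angle description~(\ref{SSEq}), in which convergence is genuinely uniform on the compact interval $[0,S]$, and only translate back to the graphical type $(0,\pm)$ at the very end. One must also verify that no spurious vertical tangents appear for nearby $t$ — this is exactly the monotonicity of $\alpha$ furnished by the curvature bound — so that the number of segments matches that of $\Gamma_{r_{Ang}}$ and the indexing $\Lambda[k](\Gamma_t)\leftrightarrow\Lambda[k](\Gamma_{r_{Ang}})$ is correct.
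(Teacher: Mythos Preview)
Your proposal is correct and is precisely the approach the paper takes: the paper's own proof is the single sentence ``The proposition follows from the continuity of geodesics and the convexity of $\Gamma_{r_{Ang}}$,'' and your write-up is a careful unpacking of exactly those two ingredients (smooth dependence on initial data for~(\ref{SSEq}) on a compact arclength interval, together with the strict positivity of $\dot\alpha$ along the closed convex torus to control both the absence of inflection points and the location of the vertical tangents). The only minor bookkeeping point is that $\Lambda[0](\Gamma_t)$ also extends to negative arclength, but its type $(0,+)$ is already guaranteed directly by Lemma~\ref{shooting_horizontal_below_cylinder_is_degree_zero}, so nothing is lost by working on $[0,S]$.
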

\begin{proof}
The proposition follows from the continuity of geodesics and the convexity of $\Gamma_{r_{Ang}}$.
\end{proof}


\section{Construction of self-shrinkers}
\label{construct}

In this section we construct an infinite number of sphere and plane self-shrinkers near the plane (Theorem~\ref{thm:near_plane}), an infinite number of sphere and tori self-shrinkers near the cylinder (Theorem~\ref{thm:near_cylinder}), and an infinite number of sphere and cylinder self-shrinkers near Angenent's torus (Theorem~\ref{thm:near_torus}).


\begin{theorem}
\label{thm:near_plane}
There is a decreasing sequence $t_0 > t_1 > \cdots$ so that the rotation of the geodesic $Q[t_k]$ about the $x$-axis is an $S^n$ self-shrinker when $k$ is even and a complete $\mathbb{R}^n$ self-shrinker when $k$ is odd. Moreover, $Q[t_k]$ is the union of $k+1$ maximally extended geodesic segments.
\end{theorem}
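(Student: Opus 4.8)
The plan is a downward induction on $k$ that locates each $t_k$ as the parameter at which the outermost arc of $Q[t]$ has just become a half-entire graph. The guiding dictionary is this: writing $Q[t]=\Lambda[0](Q[t])\cup\Lambda[1](Q[t])\cup\cdots$, the innermost arc $\Lambda[0](Q[t])$ always meets the $x$-axis at the base point $(t,0)$, and the rotation of $Q[t]$ is a closed $S^n$ exactly when the outermost arc \emph{also} meets the $x$-axis perpendicularly, i.e. when $\Lambda[k](Q[t])\in\underline{I}\cup\underline{O}$, and is a complete $\mathbb{R}^n$ exactly when the outermost arc escapes to infinity, i.e. when $\Lambda[k](Q[t])\in\underline{T}$. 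In either case $Q[t]$ consists of $k+1$ maximally extended components. It therefore suffices to produce $t_0>t_1>\cdots$ so that $\Lambda[k](Q[t_k])$ is half-entire at its outer end — lying in $\underline{I}$ when $k$ is even and in $\underline{T}$ when $k$ is odd — while the intermediate arcs $\Lambda[1],\dots,\Lambda[k-1]$ continue (are non-half-entire). I take $t_0=\sqrt{2n}$, for which $Q[t_0]=\mathcal{S}$ is the single-arc round sphere.

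For the inductive step assume $t_k$ is given. For $t$ slightly below $t_k$ the outermost arc $\Lambda[k](Q[t])$ moves off its half-entire position into an adjacent region $\underline{\Lambda}(j,\pm)$, a new arc $\Lambda[k+1](Q[t])$ appears, and by Proposition~\ref{double_convergence_to_boundary} this new arc converges to $\Lambda[k](Q[t_k])$ as $t\uparrow t_k$; hence $\Lambda[k+1](Q[t])$ is \emph{not} of type $(0,+)$ when $k$ is even, and \emph{not} of type $(0,-)$ when $k$ is odd, for $t$ just below $t_k$. By contrast, Proposition~\ref{asymptotic_near_plane} shows that for all sufficiently small $t>0$ the arc $\Lambda[k+1](Q[t])$ exists and has type $(0,-)$ when $k+1$ is even and type $(0,+)$ when $k+1$ is odd. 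I therefore set $t_{k+1}$ equal to the supremum of those $t<t_k$ for which $\Lambda[k+1](Q[t])$ lies in $\underline{\Lambda}(0,-)$ (if $k+1$ is even) or $\underline{\Lambda}(0,+)$ (if $k+1$ is odd). By the two regimes above this supremum lies strictly between $0$ and $t_k$, and at it the arc sits on the boundary of its type region.

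To name this boundary configuration I split by parity. When $k+1$ is even the outer end points of the type $(0,-)$ arcs approach the $x$-axis and stay in a compact subset of $\mathbb{H}$ bounded away from the $r$-axis, so Corollary~\ref{boundaries_by_type_cor}(2) applies and forces $\Lambda[k+1](Q[t_{k+1}])\in\underline{I}^-$; rotating $Q[t_{k+1}]$ yields an $S^n$ self-shrinker. When $k+1$ is odd the outer end point runs off to infinity, so I instead track the $r$-axis crossing data $(r_\Lambda,\alpha_\Lambda)$, which vary continuously and, by $\partial\underline{\Lambda}(0,+)=\underline{T}$ from Proposition~\ref{boundaries_by_type}(1), limit onto the trumpet locus; thus $\Lambda[k+1](Q[t_{k+1}])\in\underline{T}^-$ and rotating $Q[t_{k+1}]$ yields a complete $\mathbb{R}^n$ self-shrinker. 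Choosing the supremum forces $t_{k+1}<t_k$, and continuity of the inner arcs from $t_k$ (where all of $\Lambda[1],\dots,\Lambda[k-1]$ are non-half-entire) keeps $\Lambda[1],\dots,\Lambda[k]$ non-half-entire on $[t_{k+1},t_k)$, so $Q[t_{k+1}]$ has exactly $k+2$ arcs.

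The crux is the identification in the previous paragraph, where I must prevent degenerate limits and pin both the quadrant and the type. Concretely, as $t\to t_{k+1}$ the arc $\Lambda[k+1](Q[t])$ must not flatten onto the plane or the cylinder, nor drift across the $r$-axis out of the correct quadrant; the slope control $\alpha\to\pm\pi/2$ from Lemma~\ref{shooting_plane_down} and Lemma~\ref{shooting_plane_up}, together with the crossing-height windows $r\in(\sqrt{2n},M_1)$ and $r\in(m_1,\sqrt{2(n-1)})$, is what keeps the outer end points bounded away from the $r$-axis in the even case and what forces a genuine asymptotic ray in the odd case. The second delicate point is the bookkeeping that no inner arc $\Lambda[j]$, $1\le j\le k$, becomes half-entire on the open interval $(t_{k+1},t_k)$; this is what simultaneously fixes the arc count at exactly $k+2$ and makes the sequence strictly decreasing, and it is the part that relies on the orderliness of the cascade rather than on a single continuity statement.
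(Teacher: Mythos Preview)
Your overall strategy matches the paper's: induct on $k$, use Proposition~\ref{asymptotic_near_plane} to know the type of $\Lambda[k+1](Q[t])$ for small $t$, use Proposition~\ref{double_convergence_to_boundary} and Proposition~\ref{boundaries_by_type} to know it is a different type for $t$ near $t_k$, and conclude that $\Lambda[k+1]$ must pass through a half-entire graph somewhere in between. The difference that matters is \emph{which} parameter you select, and that difference is exactly the gap you yourself flag in your final paragraph.

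You set $t_{k+1}$ to be the \emph{supremum} of those $t<t_k$ for which $\Lambda[k+1](Q[t])$ has the plane-like type. With this choice nothing prevents an inner arc $\Lambda[j]$, $j\le k$, from becoming half-entire at or below your $t_{k+1}$; if that happens, $\Lambda[k+1](Q[t_{k+1}])$ does not even exist, and the induction cannot continue. Your sentence ``this is the part that relies on the orderliness of the cascade rather than on a single continuity statement'' is an admission, not an argument. The paper avoids this entirely by defining $t_{k+1}$ as the \emph{infimum} (the ``first $t>0$'') for which $\Lambda[k+1](Q[t])$ is half-entire. With that definition the bookkeeping is automatic: by construction $\Lambda[j](Q[t])$ is non-half-entire for every $t<t_j$, and since the sequence $t_0>t_1>\cdots$ is forced by the fact that $\Lambda[k+1]$ can only exist when $\Lambda[k]$ is non-half-entire, one gets for free that all inner arcs persist on $(0,t_{k+1}]$. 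Moreover, the infimum choice pins the type of $\Lambda[k+1](Q[t])$ on the whole interval $(0,t_{k+1})$ (it cannot change without passing through a half-entire graph, contradicting minimality), which is precisely what lets Corollary~\ref{boundaries_by_type_cor} identify the limiting half-entire graph. Your supremum does not give this control on $(0,t_{k+1})$, so even the identification of the limit as lying in $\underline{I}$ or $\underline{T}$ is not justified in your framework.

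A secondary point: the end of $\Lambda[k+1]$ that stays in a compact subset of $\mathbb{H}$ is the one joined to $\Lambda[k]$, and for odd $k+1$ this is the \emph{left} end; applying (the reflection of) Corollary~\ref{boundaries_by_type_cor} then yields a trumpet in $\underline{T}^+$, not $\underline{T}^-$ as you write. This does not affect the topological conclusion but indicates the end-point geometry was not fully tracked.
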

\begin{proof}
The proof is by induction. For the base case, we define $t_0=\sqrt{2n}$ so that $Q[t_0] = \mathcal{S}$ is the sphere. We note that $\Lambda[0](Q[t])$ is type $(0,-)$ for $0<t<t_0$ (this follows from Proposition~\ref{half_entire_graph_types}). We also note that $\Lambda[1](Q[t])$ exists for $0<t<t_0$, since the sphere is the only profile curve corresponding to an embedded $S^n$ self-shrinker.

Continuing the base case, we know that there is $\varepsilon >0$ so that the left end point of $\Lambda[1](Q[t])$ remains bounded away from the $r$-axis in a compact subset of $\mathbb{H}$ for $\varepsilon \leq t \leq t_0 - \varepsilon$. When $t$ is close to $0$ it follows from Proposition~\ref{asymptotic_near_plane} that $\Lambda[1](Q[t])$ is type $(0,+)$. Applying Proposition~\ref{double_convergence_to_boundary} and Proposition~\ref{boundaries_by_type} to the left end point  of $\Lambda[0](Q[t_0])$ shows that $\Lambda[1](Q[t])$ is either type $(1,-)$ or type $(2,+)$ when $t$ is close to $t_0 = \sqrt{2n}$. In particular, by choosing $\varepsilon$ small enough, we see that the geodesic segments $\Lambda[1](Q[t])$ change type as $t$ increases from $\varepsilon$ to $t_0 - \varepsilon$. It follows that $\Lambda[1](Q[t])$ is a half-entire graph for some $t$ between $\varepsilon$ and $t_0 - \varepsilon$.  We define $t_1$ to be the first $t>0$ such that $\Lambda[1](Q[t])$ is a half-entire graph. Then $t_1 < t_0$, and by Corollary~\ref{boundaries_by_type_cor} the geodesic segment $\Lambda[1](Q[t_1])$ is a trumpet in the first quadrant.

For the inductive case, we assume that $t_N < t_{N-1} < \cdots < t_0$ are defined: $t_k$ is the first $t>0$ such that $\Lambda[k](Q[t])$ is a half-entire graph. We also assume that $\Lambda[i](Q[t_k])$ is type $(0,-)$ when $i \leq k$ is even, and it is type $(0,+)$ when $i \leq k$ is odd. In addition, we assume that $\Lambda[k](Q[t_k])$ is an inner-quarter sphere in the second quadrant when $k$ is even, and it is a trumpet in the first quadrant when $k$ is odd. Now, suppose $N$ is odd.  Then $\Lambda[N+1](Q[t])$ exists for $0<t<t_N$, and there is $\varepsilon >0$ so that the right end point of $\Lambda[N+1](Q[t])$ remains bounded away from the $r$-axis in a compact subset of $\mathbb{H}$ for $\varepsilon \leq t \leq t_N - \varepsilon$. When $t$ is close to $0$ it follows from Proposition~\ref{asymptotic_near_plane} that $\Lambda[N+1](Q[t])$ is type $(0,+)$. Applying Proposition~\ref{double_convergence_to_boundary} and Proposition~\ref{boundaries_by_type} shows that $\Lambda[N+1](Q[t])$ is type $(1,-)$ when $t$ is close to $t_N$. In particular, by choosing $\varepsilon$ small enough, we may assume that the geodesic segments $\Lambda[N+1](Q[t])$ change type as $t$ increases from $\varepsilon$ to $t_N - \varepsilon$. It follows that $\Lambda[N+1](Q[t])$ is a half-entire graph for some $t$ between $\varepsilon$ and $t_N - \varepsilon$. We define $t_{N+1}$ to be the first $t>0$ such that $\Lambda[N+1](Q[t])$ is a half-entire graph. Then $t_{N+1} < t_N$, and by Corollary~\ref{boundaries_by_type_cor} the geodesic segment $\Lambda[N+1](Q[t_{N+1}])$ is an inner-quarter sphere in the second quadrant. This completes the inductive case when $N$ is odd. When $N$ is even, the argument is similar to the construction of $Q[t_1]$ from $Q[t_0]$.
\end{proof}


\begin{theorem}
\label{thm:near_cylinder}
There is an increasing sequence $t_0 < t_1 < \cdots < \sqrt{2(n-1)}$ so that the rotation of the geodesic $\Gamma[0,t_k,0]$ about the $x$-axis is an $S^1 \times S^{n-1}$ self-shrinker when $k$ is even and an $S^n$ self-shrinker when $k$ is odd. Moreover, $\Gamma[0,t_k,0]$ is the union of $k+2$ distinct maximally extended geodesic segments.
\end{theorem}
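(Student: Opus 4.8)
The proof will parallel the structure of Theorem~\ref{thm:near_plane}, but now the continuous family is $\Gamma_t = \Gamma[0,t,0]$ shot perpendicularly from the $r$-axis, and the limiting object at the top of the range is the cylinder rather than the plane. The two anchors of the induction are: Proposition~\ref{asymptotic_near_cylinder}, which describes the types of $\Lambda[k](\Gamma_t)$ when $t$ is close to $\sqrt{2(n-1)}$, and Proposition~\ref{asymptotic_near_torus}, which describes them when $t$ is close to $r_{Ang}$. The plan is to run $t$ over the interval $(r_{Ang},\sqrt{2(n-1)})$ and, at each inductive stage, locate a value of $t$ at which the $k$-th graphical component $\Lambda[k](\Gamma_t)$ changes type, forcing it through a half-entire graph by Proposition~\ref{boundaries_by_type} and Corollary~\ref{boundaries_by_type_cor}.

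First I would set up the base case. By Lemma~\ref{shooting_horizontal_below_cylinder_is_degree_zero}, $\Lambda[0](\Gamma_t)$ is type $(0,+)$ for all $t<\sqrt{2(n-1)}$, so the first component never changes type and we must look at $\Lambda[1](\Gamma_t)$. Here the two ends of the parameter range disagree: near $\sqrt{2(n-1)}$, Proposition~\ref{asymptotic_near_cylinder} gives $\Lambda[1](\Gamma_t)$ of type $(1,-)$ with its local maximum in the first quadrant, while near $r_{Ang}$, Proposition~\ref{asymptotic_near_torus} gives type $(0,-)$ with the maximum in the second quadrant. Since the type changes as $t$ traverses $(r_{Ang},\sqrt{2(n-1)})$, and since the right end points stay bounded away from the $r$-axis in a compact subset of $\mathbb{H}$ (as in the torus discussion preceding Proposition~\ref{asymptotic_near_torus}), Corollary~\ref{boundaries_by_type_cor} forces some intermediate $t$ at which $\Lambda[1](\Gamma_t)$ is a half-entire graph. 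I would define $t_0$ to be such a value. The corollary pins down which half-entire graph appears: for a type $(1,-)$ family degenerating, the limit lies in $\underline{I}^-$, whose rotation is an $S^1\times S^{n-1}$ self-shrinker closing up to a torus — consistent with the ``$k$ even'' case. (Here one must check that the self-shrinker one obtains really closes up into a torus rather than a sphere; this is where the geometry of $\underline{I}^-$ versus $\underline{O}^-$ enters.)

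For the inductive step I would assume $t_0<t_1<\cdots<t_N$ have been constructed, each the appropriate value at which $\Lambda[k](\Gamma_t)$ first becomes half-entire, together with a description of the types of all lower components $\Lambda[i](\Gamma_{t_N})$ for $i\le N$. The key is that on the subinterval of $t$ just below $\sqrt{2(n-1)}$ the asymptotic data of Proposition~\ref{asymptotic_near_cylinder} persists, giving one type for $\Lambda[N+1]$, while just above $t_N$ continuity of geodesics (and the fact that at $t_N$ the $N$-th component has degenerated to a half-entire graph) forces a different type for $\Lambda[N+1]$. The type mismatch again produces an intermediate half-entire graph via Corollary~\ref{boundaries_by_type_cor}, defining $t_{N+1}$; the corollary's case analysis identifies the topological type (alternating torus/sphere). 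The clause ``$\Gamma[0,t_k,0]$ is the union of $k+2$ distinct maximally extended geodesic segments'' follows by bookkeeping: the $r$-axis shooting contributes an extra segment compared to the $x$-axis shooting of Theorem~\ref{thm:near_plane}, since $\Gamma_t$ has both a $\Lambda[0]$ in the first quadrant and its reflection structure across the axis.

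The main obstacle I expect is verifying the \emph{monotone persistence of types} across the parameter interval $(r_{Ang},\sqrt{2(n-1)})$ — that is, ensuring that as $t$ decreases, the lower-indexed components $\Lambda[i](\Gamma_t)$, $i\le N$, retain the types assumed in the inductive hypothesis right up to $t_{N+1}$, so that only the top component changes type and the half-entire graph detected is genuinely $\Lambda[N+1]$ and not some earlier component re-degenerating. This requires that $t_{N+1}$ be chosen as the \emph{first} $t$ below $\sqrt{2(n-1)}$ (equivalently, that we control the geodesics before any earlier component can re-enter a boundary stratum), and that the right end points of all relevant segments stay in a fixed compact region bounded away from $\{r=0\}$, so that Corollary~\ref{boundaries_by_type_cor} applies cleanly rather than allowing a component to exit through the $x$-axis uncontrolled. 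Managing this bounded-away-from-the-axis condition uniformly along the induction, using the Gauss-Bonnet area estimates of Section~\ref{prelim:gauss_bonnet}, is the technical heart of the argument.
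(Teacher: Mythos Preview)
Your overall strategy is sound and would produce the spheres ($k$ odd), but there is a genuine gap in your treatment of the tori ($k$ even): you have misidentified how they arise. You claim that when $\Lambda[1](\Gamma_t)$ degenerates to a half-entire graph in $\underline{I}^-$, the rotation ``is an $S^1 \times S^{n-1}$ self-shrinker closing up to a torus.'' This is wrong. Every half-entire graph exits the upper half-plane either through the $x$-axis or through infinity (Proposition~\ref{half_entire_graph_types}), so the full geodesic $\Gamma_t$ then has its ends on the axis of rotation or at infinity; its rotation is topologically a sphere, a plane, or a cylinder --- never a torus. The event you describe in your base case is exactly the paper's $t_1$ (an immersed $S^n$), not $t_0$. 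The distinction ``$\underline{I}^-$ versus $\underline{O}^-$'' you allude to is irrelevant here: both are quarter-sphere families hitting the $x$-axis, and both give spheres.

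Tori require a mechanism your proposal omits entirely. A geodesic $\Gamma_t = \Gamma[0,t,0]$ yields an $S^1 \times S^{n-1}$ self-shrinker exactly when it is a \emph{closed} curve, which by the $x \mapsto -x$ symmetry of this shooting problem happens when some $\Lambda[m](\Gamma_t)$ meets the $r$-axis \emph{perpendicularly} (equivalently, its local maximum lies on the $r$-axis). This is not a half-entire event and is invisible to Corollary~\ref{boundaries_by_type_cor}. The paper tracks it separately: near the cylinder, $\Lambda[N+1](\Gamma_t)$ is type $(1,-)$ with its maximum in the first quadrant; just above the previously constructed sphere parameter $t_{2N-1}$, Proposition~\ref{double_convergence_to_boundary} and Proposition~\ref{boundaries_by_type} force $\Lambda[N+1](\Gamma_t)$ to be type $(0,-)$ with its maximum in the second quadrant. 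The type change yields the half-entire event $t_{2N+1}$ (a sphere via $\underline{I}^-$), while an intermediate-value argument on the $x$-coordinate of the maximum yields $t_{2N}$, where the curve closes up into a torus. Your plan, routing everything through the half-entire boundary strata, cannot see the even-$k$ half of the theorem.
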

\begin{proof}
The proof is by induction; it is similar to the proof of Theorem~\ref{thm:near_plane}. Let $\Gamma_t = \Gamma[0,t,0]$. By Lemma~\ref{shooting_horizontal_below_cylinder_is_degree_zero}, we know that $\Lambda[0](\Gamma_t)$ is type $(0,+)$ when $t<\sqrt{2(n-1)}$. It follows from Proposition~\ref{asymptotic_near_cylinder} that for each $N>0$, there exists $\varepsilon>0$ so that whenever $\sqrt{2(n-1)} - \varepsilon < t < \sqrt{2(n-1)}$, the geodesic segment $\Lambda[k](\Gamma_t)$ exists for $0 \leq k \leq N$. Moreover, $\Lambda[k](\Gamma_t)$ is type $(1,-)$ when $k$ is odd, and $\Lambda[k](\Gamma_t)$ is type $(1,+)$ when $k \geq 2$ is even.

For the base case, we define $t_0$ to be the largest $t< \sqrt{2(n-1)}$ such that $\Lambda[1](\Gamma_t)$ intersects the $r$-axis perpendicularly. Then $t_0 = r_{Ang}$ and $\Gamma_{t_1}$ is the closed embedded convex curve constructed in Section~\ref{shoot_near_torus}.

Continuing the base case, since $\Lambda[1](\Gamma_t)$ is type $(1,-)$ when $t$ is close to $\sqrt{2(n-1)}$ and type $(0,-)$ when $t$ is close to $t_0$, there is some $t$ between $\sqrt{2(n-1)}$ and $t_0$ such that $\Lambda[1](\Gamma_t)$ is a half-entire graph. We define $t_1$ to be the largest $t< \sqrt{2(n-1)}$ such that $\Lambda[1](\Gamma_t)$ is a half-entire graph. By Corollary~\ref{boundaries_by_type_cor}, the half-entire graph $\Lambda[1](\Gamma_{t_1})$ is an inner-quarter sphere in the second quadrant. Notice that $\Lambda[0](\Gamma_{t_1})$ is type $(0,+)$, $\Lambda[1](\Gamma_{t_1}) \in \underline{I}^-$, and $\Lambda[-1](\Gamma_{t_1}) \in \underline{I}^+$ so that $\Gamma_{t_1}$ is the union of $3$ maximally extended geodesic segments.

For the inductive case, we assume that $t_{2N-1} > t_{2N-3} > \cdots > t_1$ are defined: $t_{2k-1}$ is the largest $t< \sqrt{2(n-1)}$ such that $\Lambda[k](\Gamma_t)$ is a half-entire graph. We also assume that $\Lambda[N](\Gamma_{t_{2N-1}})$ is an inner-quarter sphere. Suppose $\Lambda[N](\Gamma_{t_{2N-1}})$ is an inner-quarter sphere in the first quadrant. By Proposition~\ref{double_convergence_to_boundary} and and Proposition~\ref{boundaries_by_type} we have $\Lambda[N+1](\Gamma_t)$ is type $(0,-)$ with a local maximum in the second quadrant when $t>t_{2N-1}$ is close to $t_{2N-1}$. It follows that the type of $\Lambda[N+1](\Gamma_t)$ changes as $t$ decreases from $\sqrt{2(n-1)}$ to $t_{2N-1}$, so we can define $t_{2N+1} > t_{2N-1}$ to be the largest $t< \sqrt{2(n-1)}$ such that $\Lambda[N+1](\Gamma_t)$ is a half-entire graph. Then $\Lambda[N+1](\Gamma_{t_{2N+1}})$ is an inner-quarter sphere in the second quardrant (since it is the limit of type $(1,-)$ geodesic segments). Therefore, the geodesic $\Gamma_{t_{2N+1}}$ is the union of $2N+3$ maximally extended geodesic segments, and its rotation about the $x$-axis is an immersed $S^n$ self-shrinker. Furthermore, since the local maximums of $\Lambda[N+1](\Gamma_{t_{2N+1}})$ and $\Lambda[N+1](\Gamma_{t})$ for $t$ near $t_{2N-1}$ are in different quadrants, there exists $t_{2N}$ between $t_{2N-1}$ and $t_{2N+1}$ so that $\Lambda[N+1](\Gamma_{t_{2N}})$ intersects the $r$-axis perpendicularly. Then $\Gamma_{t_{2N}}$ is the union of $2N+2$ maximally extended geodesic segments, and its rotation about the $x$-axis is an immersed $S^1 \times S^{n-1}$ self-shrinker. This completes the inductive case.
\end{proof}


\begin{theorem}
\label{thm:near_torus}
There is a decreasing sequence $t_0 > t_1 > \cdots > r_{Ang}$ so that the rotation of the geodesic $\Gamma[0,t_k,0]$ about the $x$-axis is a complete $\mathbb{R}^1 \times S^{n-1}$ self-shrinker when $k$ is even and an $S^n$ self-shrinker when $k$ is odd. Moreover, $\Gamma[0,t_k,0]$ is the union of $2k+1$ maximally extended geodesic segments.
\end{theorem}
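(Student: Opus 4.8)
The plan is to argue by induction on $k$, running the same ``change of type forces a half-entire graph'' scheme used in the proofs of Theorem~\ref{thm:near_plane} and Theorem~\ref{thm:near_cylinder}, but now seeded at Angenent's torus rather than at the plane or the cylinder. I work throughout with the symmetric family $\Gamma_t = \Gamma[0,t,0]$ from Section~\ref{shoot_near_cylinder}. Since~(\ref{SSEq}) is invariant under $x \mapsto -x$ and $\Gamma_t$ meets the $r$-axis perpendicularly, each $\Gamma_t$ is symmetric about the $r$-axis, so that $\Lambda[-j](\Gamma_t)$ is the reflection of $\Lambda[j](\Gamma_t)$. Consequently, as soon as the outermost right component $\Lambda[k](\Gamma_t)$ is a half-entire graph, the whole geodesic is $\Lambda[-k](\Gamma_t) \cup \cdots \cup \Lambda[k](\Gamma_t)$, a union of exactly $2k+1$ maximally extended segments whose two ends are of the same kind: two trumpets give a complete $\mathbb{R} \times S^{n-1}$, and two inner-quarter spheres give an $S^n$. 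It therefore suffices to produce $t_0 > t_1 > \cdots > r_{Ang}$ for which $\Lambda[k](\Gamma_{t_k})$ is a trumpet when $k$ is even and an inner-quarter sphere when $k$ is odd. For the base case I take $t_0 = \sqrt{2(n-1)}$, so that $\Gamma_{t_0}$ is the cylinder, a single-segment complete $\mathbb{R} \times S^{n-1}$ self-shrinker ($k=0$ even).

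For the inductive step, suppose $t_{k-1}$ is constructed with $\Lambda[k-1](\Gamma_{t_{k-1}})$ a half-entire graph of the prescribed type, and examine $\Lambda[k](\Gamma_t)$ as $t$ ranges over $(r_{Ang}, t_{k-1})$. At the torus end, Proposition~\ref{asymptotic_near_torus} gives that $\Lambda[k](\Gamma_t)$ has degree $0$, of type $(0,+)$ for $k$ even and type $(0,-)$ for $k$ odd, once $t$ is close to $r_{Ang}$. At the other end I claim $\Lambda[k](\Gamma_t)$ has type $(1,-)$ as $t \to t_{k-1}^-$: for $k=1$ this is Proposition~\ref{asymptotic_near_cylinder} (with $t_0 = \sqrt{2(n-1)}$), while for $k \geq 2$ it follows from Proposition~\ref{double_convergence_to_boundary}, which forces $\Lambda[k](\Gamma_t)$ to converge to the half-entire graph $\Lambda[k-1](\Gamma_{t_{k-1}})$, together with Proposition~\ref{boundaries_by_type}. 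Because $\partial\underline{\Lambda}(k,\pm) \subset \underline{H}$, the disagreement between the two end types forces $\Lambda[k](\Gamma_t)$ to pass through a half-entire graph somewhere in $(r_{Ang}, t_{k-1})$; I let $t_k$ be the largest such $t$, so that $r_{Ang} < t_k < t_{k-1}$ and the sequence is decreasing.

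To identify $\Lambda[k](\Gamma_{t_k})$ I use that, on the interval $(t_k, t_{k-1})$ just above the transition, $\Lambda[k](\Gamma_t)$ is type $(1,-)$, while immediately below $t_k$ it acquires the torus-end type. When $k$ is odd this lower type is $(0,-)$, the right endpoints remain in a compact subset of $\mathbb{H}$ away from the $r$-axis, and Corollary~\ref{boundaries_by_type_cor} identifies the limit as an element of $\underline{I}^-$; thus $\Lambda[k](\Gamma_{t_k})$ is an inner-quarter sphere and, with its mirror in $\underline{I}^+$, its rotation is an $S^n$. When $k$ is even the lower type is $(0,+)$, the unique half-entire graph bordering both $(1,-)$ and $(0,+)$ is a trumpet, the right endpoints escape to infinity, and $\Lambda[k](\Gamma_{t_k}) \in \underline{T}$; the symmetric pair of trumpets then gives a complete $\mathbb{R} \times S^{n-1}$. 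In both cases the component count $2k+1$ is read off from the symmetry, which closes the induction.

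The main obstacle is exactly the sign and quadrant bookkeeping already confronted in the proof of Theorem~\ref{thm:near_cylinder}: one must verify that the type of $\Lambda[k](\Gamma_t)$ just below $t_{k-1}$ is genuinely the degree-$1$ type $(1,-)$ (and not the degree-$0$ type sharing the same bounding half-entire graph), and that the first transition encountered as $t$ decreases is to the torus-end type, so that the crossed half-entire graph is the trumpet for even $k$ and the inner-quarter sphere for odd $k$, rather than an outer-quarter sphere. This is precisely where one must track whether the local extremum of $\Lambda[k]$ lies in the first or the second quadrant. A secondary technical point is to confirm, on the odd (quarter-sphere) branch, the hypothesis of Corollary~\ref{boundaries_by_type_cor} that the relevant endpoints of $\Lambda[k](\Gamma_t)$ stay in a compact subset of $\mathbb{H}$ bounded away from the $r$-axis across the chosen subinterval, guaranteeing both that $\Lambda[k]$ is defined there and that the geodesic does not terminate before the type change is detected.
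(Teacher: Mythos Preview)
Your overall scheme matches the paper's: induction on $k$, symmetry of $\Gamma_t$ about the $r$-axis giving the $2k+1$ count, Proposition~\ref{asymptotic_near_torus} fixing the types near $r_{Ang}$, and a type change forcing $\Lambda[k](\Gamma_t)$ to hit a half-entire graph. The one substantive difference is your choice of which transition to take. You set $t_k$ to be the \emph{largest} $t$ in $(r_{Ang},t_{k-1})$ at which $\Lambda[k](\Gamma_t)$ is half-entire; the paper instead takes $t_k$ to be the \emph{first} $t>r_{Ang}$ with this property.

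That single switch dissolves both obstacles you flag. With the paper's choice, for every $t\in(r_{Ang},t_k)$ the segment $\Lambda[k](\Gamma_t)$ retains its torus-end type from Proposition~\ref{asymptotic_near_torus} (type $(0,+)$ for $k$ even, $(0,-)$ for $k$ odd), so approaching $t_k$ from below one reads off the half-entire limit directly from $\partial\underline{\Lambda}(0,+)=\underline{T}$ and $\partial\underline{\Lambda}(0,-)=\underline{I}$ in Proposition~\ref{boundaries_by_type} (and Corollary~\ref{boundaries_by_type_cor} for the quadrant). No separate quadrant bookkeeping is needed, and one never has to argue that the type ``immediately below $t_k$'' is the torus-end type, which in your setup is unjustified since further transitions may lurk between $r_{Ang}$ and $t_k$. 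Likewise, the paper does not need your claim that the type near $t_{k-1}^-$ is specifically $(1,-)$: it suffices that the limiting half-entire graph there (a trumpet or an inner-quarter sphere, by induction) is never in $\partial\underline{\Lambda}(0,+)$ when $k$ is odd and never in $\partial\underline{\Lambda}(0,-)$ when $k$ is even, so a type change is forced somewhere in $(r_{Ang},t_{k-1})$ and $t_k<t_{k-1}$.
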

\begin{proof}
The proof is by induction; it is similar to the proofs of Theorem~\ref{thm:near_plane} and Theorem~\ref{thm:near_cylinder}, and we provide a sketch. Let $\Gamma_t = \Gamma[0,t,0]$. Given $N>0$, there exists $\varepsilon > 0$ so that whenever $r_{Ang} < t < r_{Ang} + \varepsilon$, the geodesic segment $\Lambda[k](\Gamma_t)$ exists for $0 \leq k \leq N$. Moreover, $\Lambda[k](\Gamma_t)$ is type $(0,+)$ when $k$ is even, and it is type $(0,-)$ when $k$ is odd. For the base case, we define $t_0 = \sqrt{2(n-1)}$, so that $\Gamma_{t_0}= \mathcal{C}$ is the cylinder. For the general case, we define $t_k$ to be the first $t>r_{ang}$ such that $\Lambda[k](\Gamma_t)$ is a half-entire graph. Then $\Lambda[k](\Gamma_{t_{k}})$ is either a trumpet in the first quadrant or an inner-quarter sphere in the second quadrant, depending on whether it is the limit of type $(0,+)$ curves or $(0,-)$ curves, respectively.
\end{proof}


\section*{Appendix A: Comparison results for quarter spheres}

In this appendix we prove some comparison results for quarter spheres. The main application of these results is that an inner-quarter sphere first intersects the $r$-axis outside of the sphere, and an outer-quarter sphere first intersects the $r$-axis inside the sphere.

Let $f$ and $g$ be solutions to
\begin{equation}
\label{cp:eqn:1}
\frac{f^{\prime \prime }}{1+(f^{\prime })^{2}}=\left( \frac{r}{2}-\frac{n-1}{r}\right) f^{\prime }-\frac{1}{2}f.
\end{equation}
We are interested in the shooting problem where $f^{\prime }(0)=g^{\prime}(0)=0$ and $g(0) > f(0) > 0$.

In particular, we will consider the case where $g$ is the sphere ($g(r)=\sqrt{2n-r^2}$) and $f$ is an inner-quarter sphere. In this setting, we know that $f$ is decreasing, concave down, and it crosses the $r$-axis before its slope blows-up (see Section 2 and Corollary 3.3 in~\cite{D}). We want to show that $f$ crosses the $r$-axis outside of the sphere.

We will use the following identities at $0$ for solutions to~(\ref{cp:eqn:1}):
\begin{equation}
\label{cp:eqn:2}
f'(0)=0, \quad f''(0)= -\frac{1}{2n}f(0), \quad f'''(0) =0, \quad f^{(iv)}(0) = -\frac{3}{4n(n+2)} \left(\frac{f(0)^3}{n^2}+f(0) \right).
\end{equation}
These identities follow from l'H\^{o}spital's rule applied to~(\ref{cp:eqn:1}) and its derivatives.

\begin{lemma}
\label{cp:lemma:1}
If $f$ is a solution to~\emph{(\ref{cp:eqn:1})} with $f(0)<\sqrt{2n}$, then $f$ must intersect the sphere before it crosses the $r$-axis.
\end{lemma}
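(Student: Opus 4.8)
The plan is to argue by contradiction, comparing $f$ with the sphere $g(r)=\sqrt{2n-r^2}$, which is itself a solution of~(\ref{cp:eqn:1}). Set $w=f-g$. Applying the identities~(\ref{cp:eqn:2}) to both $f$ and $g$ (recall $g(0)=\sqrt{2n}$) one reads off the initial data $w(0)=f(0)-\sqrt{2n}<0$, $w'(0)=0$, and $w''(0)=\tfrac1{2n}\bigl(\sqrt{2n}-f(0)\bigr)>0$: the curve $f$ starts strictly inside the sphere but immediately begins moving outward toward it, with $r=0$ a local minimum of $w$.

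The structural heart of the argument is a maximum-principle observation. Subtracting the two copies of~(\ref{cp:eqn:1}) gives
\[
\frac{f''}{1+(f')^2}-\frac{g''}{1+(g')^2}=\left(\frac r2-\frac{n-1}{r}\right)w'-\frac12 w .
\]
At any interior point $r_*$ with $w'(r_*)=0$ we have $f'(r_*)=g'(r_*)$, so the two denominators coincide, the left side collapses to $w''(r_*)/(1+(f'(r_*))^2)$, and the first term on the right vanishes; hence $w''(r_*)=-\tfrac12\bigl(1+(f'(r_*))^2\bigr)w(r_*)$. Thus every critical point of $w$ lying in the region $\{w<0\}$ is a strict local minimum, so $w$ admits no interior local maximum while it is negative. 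Since $r=0$ is already a local minimum, it follows that $w$ is strictly increasing as long as it remains negative; in particular $w'>0$, i.e. $f'>g'$, so $f$ stays flatter than the sphere and remains a graph over the $r$-axis throughout $[0,\sqrt{2n})$.

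I then want to force $w$ to reach $0$ before $f$ meets the $r$-axis. Suppose instead $w<0$ on all of $[0,\sqrt{2n})$. By the previous paragraph the displayed relation can be rewritten as a genuine linear second-order equation $w''-A(r)w'+C(r)w=0$ with $C(r)=\tfrac12\bigl(1+(g')^2\bigr)$. As $r\to\sqrt{2n}^-$ the sphere exits vertically, $g'(r)\to-\infty$, so $C(r)\to+\infty$ while $A(r)$ stays bounded (its correction term carries the factor $(1+(g')^2)$ in the denominator). A Sturm oscillation/comparison argument then forces $w$ to vanish in every left neighborhood of $\sqrt{2n}$, contradicting $w<0$. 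Hence $w(r_1)=0$ for some $r_1<\sqrt{2n}$, where $f(r_1)=g(r_1)>0$, so $f$ meets the sphere while still strictly positive, i.e. strictly before its $r$-axis crossing $r_0>r_1$ (recall $f$ is decreasing).

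The routine parts—the initial-data computation and the algebra recasting the subtracted equation as $w''-Aw'+Cw=0$—I would compress to a line or two. The main obstacle is the final step: converting ``the sphere exits vertically'' into a rigorous statement that $w$ is driven up to $0$. The clean route is the Sturm comparison just sketched, for which one must verify that after removing the first-order term the effective potential $C-\tfrac14A^2-\tfrac12A'$ still blows up positively near $\sqrt{2n}$. An alternative, more in the spirit of the rest of the paper, is to assume the $r$-axis crossing occurs at $r_0<\sqrt{2n}$ and derive a contradiction from the Gauss--Bonnet formula~(\ref{gauss_bonnet}): a region pinned against the $x$-axis along the segment between $(f(0),0)$ and $(\sqrt{2n},0)$ would enclose infinite $\int (n-1)/r^2\,dx\,dr$. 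Either route isolates the same phenomenon—the inner curve is too flat to reach the $r$-axis inside the disk of radius $\sqrt{2n}$ without first crossing the sphere.
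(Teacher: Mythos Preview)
Your maximum-principle observation for $w=f-g$ is correct and clean: at any interior critical point with $w<0$ one has $w''>0$, so $w$ is increasing while negative. The trouble is the closing step. The claim that $A(r)$ stays bounded is false. Writing the equation (via either substitution $f''=g''+w''$ or $g''=f''-w''$) gives a first-order coefficient containing the term $(1+(g')^2)\bigl(\tfrac{r}{2}-\tfrac{n-1}{r}\bigr)$, and since $1+(g')^2=2n/(2n-r^2)$ this blows up like $(2n-r^2)^{-1}$ as $r\to\sqrt{2n}$. Thus $A\sim(2n-r^2)^{-1}$ while $C=\tfrac12(1+(g')^2)\sim(2n-r^2)^{-1}$ as well, so $\tfrac14A^2\sim(2n-r^2)^{-2}$ dominates and the effective potential $C-\tfrac14A^2-\tfrac12A'$ tends to $-\infty$, not $+\infty$. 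The Sturm comparison therefore does not force oscillation.

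The Gauss--Bonnet alternative also does not close. The region between the two curves meets the $x$-axis along the segment from $(f(0),0)$ to $(\sqrt{2n},0)$, where the metric $g_{Ang}$ degenerates; formula~(\ref{gauss_bonnet}) is only valid for regions with piecewise geodesic boundary \emph{inside} $\mathbb{H}$. If you truncate at $r=\epsilon$, the horizontal segment is not a geodesic, and its geodesic curvature contributes a term of order $(n-1)(\sqrt{2n}-f(0))/\epsilon$ that exactly cancels the divergence of $\int(n-1)/r^2\,dx\,dr$. No contradiction results.

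The paper avoids both obstacles by comparing with the \emph{ratio} $v=f/g$ instead of the difference. The point is that $v(r_0)=0<v(0)$, so once one checks $v$ is initially increasing (this uses the fourth-order identity in~(\ref{cp:eqn:2}), where the nonlinearity $f(0)^3$ vs.\ $g(0)^3$ enters), $v$ is forced to have an interior maximum $\bar r\in(0,r_0)$ with $0<v(\bar r)<1$. A direct computation of $v''(\bar r)$ from the two copies of~(\ref{cp:eqn:1}) then gives $v''(\bar r)>0$, the desired contradiction. The ratio is what makes the endpoint value drop to zero; with the difference $w$ you only get $w(r_0)=-g(r_0)$, which need not lie below $w(0)$, and this is why your argument needs an extra oscillation mechanism that is not actually available.
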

\begin{proof}
Suppose $f$ does not intersect $g$ (the sphere) before it crosses the $r$-axis. Let $r_{0}>0$ be the point where $f$ crosses the $r$-axis.  Then $g>f$ on $[0,r_{0})$.

We consider the function $v=\frac{f}{g}$, which satisfies
\begin{equation*}
v^{\prime }=\frac{f^{\prime }-vg^{\prime }}{g}
\end{equation*}
and
\begin{equation*}
v^{\prime \prime }=\frac{f^{\prime \prime }-vg^{\prime \prime }}{g}-2\frac{g^{\prime }}{g}v^{\prime }.
\end{equation*}
Now, using l'H\^{o}spital's rule and the above identities at $0$, we have $v^{\prime }(0)=0$ and $v^{\prime \prime }(0)=0$. Similarly, $v'''(0)=0$, and $v^{(iv)}(0)=-\frac{3}{4n^3(n+2)} v(0) [f(0)^{2}-g(0)^{2}]>0$, where we use the non-linear dependence of $f^{(iv)}(0)$ on $f(0)$ in the last equality. It follows that $v$ is increasing near $0$. Since $v(0)>0$ and $\lim_{r\rightarrow r_{0}}v(r)=0$ (where we use l'H\^{o}spital's rule if $r_{0}=\sqrt{2n}$), we see that $v$ must achieve its supremum over $(0,r_{0})$ at an interior point, say $\bar{r}$.

By assumption, $0<v<1$ in $(0,r_{0})$, and in particular $0<v(\bar{r})<1$. We compute $v''(\bar{r})$. Using $v'(\bar{r})=0$, we have at $\bar{r}$: $$v g'' = (1+(g')^2) \left[ \left( \frac{r}{2} - \frac{n-1}{r} \right) f' - \frac{1}{2} f \right]$$ so that $$f'' - v g'' = \left( (f')^2 - (g')^2 \right) \left[ \left( \frac{r}{2} - \frac{n-1}{r} \right) f' - \frac{1}{2} f \right] = (v^2-1) (g')^2 \frac{f''}{1+(f')^2}.$$ Therefore, at $\bar{r}$: $$v'' = \frac{f^{\prime \prime }-vg^{\prime \prime }}{g} = (v^2-1) \frac{(g')^2}{g} \frac{f''}{1+(f')^2} > 0,$$ which contradicts the fact that $v$ has a maximum at $\bar{r}$.
\end{proof}

\begin{lemma}
\label{cp:lemma:2}
If $f$ is a solution to~\emph{(\ref{cp:eqn:1})} with $f(0)<\sqrt{2n}$, then $f$ can only intersect the sphere once before it crosses the $r$-axis.
\end{lemma}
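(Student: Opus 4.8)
The plan is to reuse the quotient $v = f/g$ from the proof of Lemma~\ref{cp:lemma:1}, where $g$ is the sphere and $f$ the inner-quarter sphere, and to exploit the identity for $v''$ at critical points. Recall from that proof that $v(0) = f(0)/g(0) \in (0,1)$, that $v'(0) = v''(0) = v'''(0) = 0$ while $v^{(iv)}(0) > 0$ (so $v$ is strictly increasing for small $r>0$), and that at any point $\bar r$ with $v'(\bar r) = 0$ one has
\[
v''(\bar r) = \big(v(\bar r)^2 - 1\big)\,\frac{(g')^2}{g}\,\frac{f''}{1+(f')^2}.
\]
Since the inner-quarter sphere is decreasing and concave down, $f'' < 0$, and $(g')^2/g > 0$ on $(0,\sqrt{2n})$; hence the right-hand side is a strictly negative multiple of $v(\bar r)^2-1$. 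This yields the sign rule: every interior critical point of $v$ with $v > 1$ is a strict local maximum, and every one with $v < 1$ is a strict local minimum. Moreover $v = 1$ and $v' = 0$ cannot hold simultaneously, since $f(\bar r) = g(\bar r)$ and $f'(\bar r) = g'(\bar r)$ would force $f \equiv g$ by uniqueness for~\eqref{cp:eqn:1}, contradicting $f(0) < g(0)$. Thus $f$ meets the sphere transversally.

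Next I would locate the first critical point. Because $v$ is increasing near $0$, its first critical point $\bar r$ (if any) is a local maximum, so $v''(\bar r) \le 0$; by the sign rule this forces $v(\bar r) > 1$. Consequently $v$ increases \emph{monotonically} on $(0,\bar r)$ from a value below $1$ to a value above $1$, crossing the level $v=1$ exactly once there. In geometric terms, $f$ meets the sphere exactly once up to $\bar r$ and then lies strictly outside it. This already gives the existence and transversality of a first intersection (consistent with Lemma~\ref{cp:lemma:1}); the content of Lemma~\ref{cp:lemma:2} is that no further intersection occurs, i.e. that $v$ never returns to the level $1$ on the remaining part of the domain of $f$.

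To finish, I would argue that $f$ cannot re-enter the sphere. A second intersection would require $v$ to descend from its maximum value $v(\bar r) > 1$ back through $1$; by the sign rule the next critical point after $\bar r$ would be a local minimum with value $< 1$, producing a \emph{lens} interval on which $f < g$. The plan is to rule this out using the behaviour near the right endpoint: as $r \uparrow \sqrt{2n}$ the sphere satisfies $g \to 0^{+}$ and $g' \to -\infty$, whereas the inner-quarter sphere has bounded slope and crosses the $r$-axis only beyond $\sqrt{2n}$, so $f > g$ in a neighbourhood of $\sqrt{2n}$; a lens on which $f<g$ would therefore have to open and close in the interior, and at its interior critical point $\bar s$ of $w = f-g$ one has the clean relation $w''(\bar s) = -\tfrac12\big(1 + (f')^2\big)\,w(\bar s)$, whose sign, together with the monotonicity of the Wronskian $\mathcal{W} = gf' - fg' = g^2 v'$ (which starts positive just to the right of $0$), I would use to derive a contradiction.

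The main obstacle is precisely this last step. The pointwise second-derivative test at a single critical point is symmetric in $f$ and $g$ and is fully consistent with a maximum of $v$ above $1$ followed by a minimum below $1$; it therefore cannot by itself forbid a return to the sphere. The real work is the global comparison showing that, once $f$ has passed outside, the competition between the unbounded steepening of the sphere near $\sqrt{2n}$ and the bounded concave geometry of the inner-quarter sphere prevents $f$ from ever dipping back inside --- this is where the quantitative estimates, rather than the sign rule alone, must be brought to bear.
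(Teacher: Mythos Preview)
Your proposal is honest about its own gap, and that gap is real: the sign rule for $v=f/g$ at critical points is symmetric and \emph{cannot} rule out the alternating pattern (max above $1$, then min below $1$) that a second intersection would produce. Your attempted rescue is circular: the claim that the inner-quarter sphere ``crosses the $r$-axis only beyond $\sqrt{2n}$'' is equivalent to what you are trying to prove, since if $r_0<\sqrt{2n}$ then $f(r_0)=0<g(r_0)$ forces a second crossing, while if there is only one crossing then $f\ge g$ near $\sqrt{2n}$ and hence $r_0\ge\sqrt{2n}$. The clean identity $w''(\bar s)=-\tfrac12(1+(f')^2)w(\bar s)$ for $w=f-g$ at its critical points has exactly the same defect as the $v$-identity: it says maxima of $f-g$ are positive and minima are negative, which is perfectly compatible with a lens.

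The paper avoids this by switching to the \emph{derivative} quotient $w=f'/g'$. The point is that $w$ has the opposite initial monotonicity: using the identities at $0$ one gets $w(0)=f(0)/g(0)<1$, $w'(0)=0$, and $w''(0)<0$, so $w$ \emph{decreases} near $0$. If $f$ meets the sphere a second time at $r_2$, coming from above, then $f'(r_2)\le g'(r_2)<0$ gives $w(r_2)\ge 1$. Hence $w$ attains its infimum on $(0,r_2)$ at an interior point $\bar r$ with $w(\bar r)<w(0)<1$. Differentiating~\eqref{cp:eqn:1} and using $w'(\bar r)=0$ one computes
\[
w''(\bar r)=\Bigl\{\frac{2w(g'')^2}{(1+(f')^2)(1+(g')^2)}-wg'\,\frac{r}{(2n-r^2)^{3/2}}\Bigr\}\bigl(w^2-1\bigr),
\]
and since $w>0$, $g'<0$ the brace is strictly positive; with $w(\bar r)<1$ this gives $w''(\bar r)<0$, contradicting that $\bar r$ is a minimum. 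The asymmetry that makes this work---and that your $v$ lacks---is that the relevant critical point is forced to lie \emph{below} the initial value, so one never has to control a prior maximum.
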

\begin{proof}
Suppose $f$ intersects $g$ (the sphere) at two points before it crosses the $r$-axis: say $r_1$ and $r_2$, where $0<r_1<r_2$. Since $g(0) > f(0)$, we may assume that $f>g$ on $(r_1,r_2)$. We may also assume that $r_2 < \sqrt{2n}$ (otherwise, $f$ would intersect the $r$-axis perpendicularly at $\sqrt{2n}$, contradicting Huisken's theorem).

We consider the function $w=\frac{f^{\prime }}{g^{\prime }}$, which satisfies
\begin{equation*}
w^{\prime }=\frac{f^{\prime \prime }-wg^{\prime \prime }}{g^{\prime }}
\end{equation*}
and
\begin{equation*}
w^{\prime \prime }=\frac{f^{\prime \prime \prime }-wg^{\prime \prime \prime }}{g^{\prime }}-2\frac{g^{\prime \prime }}{g^{\prime }}w^{\prime }.
\end{equation*}
Now, using l'H\^{o}spital's rule and the above identities at $0$, we have $w(0)= \frac{f''(0)}{g''(0)} = \frac{f(0)}{g(0)}<1$ and $w^{\prime }(0)=0$. Furthermore, using the non-linear dependence of $g^{(iv)}(0)$ on $g(0)$, we have $w^{\prime \prime }(0)=\frac{1}{2n^2(n+2)}w(0)[f(0)^{2}-g(0)^{2}]<0$. It follows that $w$ is decreasing near $0$. By assumption, we have $w(r_2) \geq 1$,  and consequently, $w$ must achieve its infimum on $(0,r_{2})$ at an interior point, say $\bar{r}$.

Recall
\begin{equation*}
f^{\prime \prime \prime }=\left( 1+(f^{\prime })^{2}\right) \left[ \frac{2f^{\prime }(f^{\prime \prime })^{2}}{(1+(f^{\prime })^{2})^{2}}+\left(\frac{r}{2}-\frac{n-1}{r}\right) f^{\prime \prime }+\frac{n-1}{r^{2}}f^{\prime }\right] .
\end{equation*}
Using $w'(\bar{r})$, we have at $\bar{r}$: $$\frac{2f^{\prime }(f^{\prime \prime })^{2}}{1+(f^{\prime })^{2}}-w\frac{2g^{\prime }(g^{\prime \prime })^{2}}{1+(g^{\prime })^{2}} = 2f^{\prime }\left[ \frac{(f^{\prime \prime })^{2}}{1+(f^{\prime })^{2}}-\frac{(g^{\prime\prime })^{2}}{1+(g^{\prime })^{2}}\right] = 2f^{\prime }(g^{\prime \prime })^{2}\left[ \frac{w^{2}-1}{(1+(f^{\prime})^{2})(1+(g^{\prime })^{2})}\right].$$ Also, at $\bar{r}$:
\begin{equation*}
\left[ \left( \frac{r}{2}-\frac{n-1}{r}\right) f^{\prime \prime }+\frac{n-1}{r^{2}}f^{\prime }\right] -w\left[ \left( \frac{r}{2}-\frac{n-1}{r}\right) g^{\prime \prime }+\frac{n-1}{r^{2}}g^{\prime }\right] =0
\end{equation*}
and
\begin{eqnarray*}
&&(f^{\prime })^{2}\left[ \left( \frac{r}{2}-\frac{n-1}{r}\right) f^{\prime \prime }+\frac{n-1}{r^{2}}f^{\prime }\right] -w \, (g^{\prime })^{2}\left[\left( \frac{r}{2}-\frac{n-1}{r}\right) g^{\prime \prime }+\frac{n-1}{r^{2}}g^{\prime }\right] \\
&& \qquad = \, w \, (g^{\prime })^{2}(w^{2}-1)\left[ \left( \frac{r}{2}-\frac{n-1}{r}\right) g^{\prime \prime }+\frac{n-1}{r^{2}}g^{\prime }\right].
\end{eqnarray*}
Therfore, at $\bar{r}$:
\begin{eqnarray*}
w^{\prime \prime } &=& \frac{f^{\prime \prime \prime }-wg^{\prime \prime
\prime }}{g^{\prime }} \\
&=& \left\{ \frac{2w(g^{\prime \prime })^{2}}{(1+(f^{\prime})^{2})(1+(g^{\prime })^{2})}+wg^{\prime }\left[ \left( \frac{1}{2}r-\frac{n-1}{r}\right) g^{\prime \prime }+\frac{n-1}{r^{2}}g^{\prime }\right] \right\} (w^{2}-1) \\
&=& \left\{ \frac{2w(g^{\prime \prime })^{2}}{(1+(f^{\prime})^{2})(1+(g^{\prime })^{2})}-wg^{\prime }\frac{r}{(2n-r^{2})^{3/2}}\right\}(w^{2}-1),
\end{eqnarray*}
where we used $g(r)=\sqrt{2n-r^{2}}$ in the last equality. Since $w(\bar{r}) < w(0) < 1$, we have $w''(\bar{r}) < 0$, which contradicts the fact that $w$ has a minimum at $\bar{r}$.
\end{proof}

We have the following consequence of Lemma~\ref{cp:lemma:1} and Lemma~\ref{cp:lemma:2}.
\begin{proposition}
\label{cp:prop1}
An inner-quarter sphere intersects the sphere exactly once before it crosses the $r$-axis.
\end{proposition}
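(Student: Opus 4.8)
The plan is to verify that an inner-quarter sphere, parametrized as a graph over the $r$-axis, satisfies the hypotheses of Lemma~\ref{cp:lemma:1} and Lemma~\ref{cp:lemma:2}, and then to combine the two lemmas directly. The substantive analytic work has already been carried out in those two lemmas, so the proposition is essentially a bookkeeping statement: I only need to confirm that an inner-quarter sphere falls within their scope.

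First I would recall that an inner-quarter sphere is a curve $I_x = \Lambda[0](Q[x])$ with $0 < x < \sqrt{2n}$, where $Q[x]$ departs perpendicularly from the $x$-axis at $(x,0)$. Written as a graph $(f(r),r)$ over the $r$-axis, this curve solves~(\ref{r_graph_SSEq}), which is identical to~(\ref{cp:eqn:1}); the perpendicular departure gives $f'(0)=0$ and $f(0)=x \in (0,\sqrt{2n})$, so the hypothesis $f(0)<\sqrt{2n}$ is met. As recorded at the start of this appendix (and established in Section~2 and Corollary~3.3 of~\cite{D}), such an $f$ is decreasing and concave down and crosses the $r$-axis before its slope blows up, so there is a well-defined first crossing point $r_0 > 0$ with $f(r_0)=0$, and $f$ is a genuine graph over $[0,r_0)$. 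This is the only point that genuinely needs checking in order to place the proposition inside the framework of the two lemmas.

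With this in hand, I would apply Lemma~\ref{cp:lemma:1} to conclude that $f$ meets the sphere $g(r)=\sqrt{2n-r^2}$ \emph{at least} once on $(0,r_0)$, and Lemma~\ref{cp:lemma:2} to conclude that it meets the sphere \emph{at most} once there. Together these give exactly one intersection before $f$ crosses the $r$-axis, which is the assertion. There is no real remaining obstacle at the level of the proposition itself: the difficulty was concentrated in the auxiliary-function arguments of the two lemmas (the maximum-principle contradiction for $v=f/g$ producing the lower bound, and the minimum-principle contradiction for $w=f'/g'$ producing the upper bound), and all that remains is to observe that the inner-quarter sphere is a solution of~(\ref{cp:eqn:1}) with the correct initial data whose graph-over-$r$ description is valid up to the axis crossing, both of which follow immediately from the definitions and the cited monotonicity.
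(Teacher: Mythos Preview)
Your proposal is correct and matches the paper's approach exactly: the paper states Proposition~\ref{cp:prop1} simply as a consequence of Lemma~\ref{cp:lemma:1} and Lemma~\ref{cp:lemma:2}, and your write-up just makes explicit the verification that an inner-quarter sphere, written as a graph over the $r$-axis, satisfies $f'(0)=0$, $f(0)\in(0,\sqrt{2n})$, and crosses the $r$-axis before blowing up. There is nothing to add.
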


When $f(0) >\sqrt{2n}$, similar arguments show that $f$ blows-up at a point $r_* < \sqrt{2n}$. Assuming $f(r_*) > 0$, this follows from the proofs of Lemma~\ref{cp:lemma:1} and Lemma~\ref{cp:lemma:2}. A proof that $f(r_*) > 0$ when $f(0) > \sqrt{2n}$ is given in Appendix B, where we study the linearized rotational self-shrinker differential equation near the sphere. In fact, we show that an outer-quarter sphere (viewed as a graph over the $x$-axis) has a local maximum and no local minima in the first quadrant. Therefore, we have the following result.
\begin{proposition}
\label{cp:prop2}
An outer-quarter sphere intersects the sphere exactly once before it crosses the $r$-axis.
\end{proposition}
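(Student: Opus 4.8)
The plan is to prove the two halves of the statement separately—that an outer-quarter sphere meets the sphere \emph{at least} once and \emph{at most} once before reaching the $r$-axis—by adapting the comparison arguments of Lemma~\ref{cp:lemma:1} and Lemma~\ref{cp:lemma:2} to the reversed initial inequality, and then invoking the description of the turning point supplied by Appendix~B. An outer-quarter sphere is the graph $(f(r),r)$ of a solution $f$ to~(\ref{cp:eqn:1}) with $f'(0)=0$ and $f(0)>\sqrt{2n}=g(0)$, where $g(r)=\sqrt{2n-r^2}$ is the sphere. First I would re-examine the auxiliary quantities $v=f/g$ and $w=f'/g'$ from those lemmas. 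The identities~(\ref{cp:eqn:2}) are unchanged, but the sign of the quartic data flips: now $f(0)^2-g(0)^2>0$, so $v^{(iv)}(0)<0$ and $w''(0)>0$, while $w(0)=f(0)/g(0)>1$. Hence $v$ is \emph{decreasing} near $0$ and $w$ is \emph{increasing} near $0$. The closed-form expressions for $v''(\bar r)$ and $w''(\bar r)$ at an interior critical point $\bar r$ are algebraically identical to those derived in the proofs of Lemma~\ref{cp:lemma:1} and Lemma~\ref{cp:lemma:2}; only the reading of their signs changes, with $v>1$ (resp.\ $w>1$) now playing the role that $v<1$ (resp.\ $w<1$) did before. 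The same maximum/minimum-principle contradictions therefore bound the number of sphere intersections occurring on the portion of the curve that is a graph over the $r$-axis.

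Next I would bring in Appendix~B, where the linearization of the self-shrinker equation near the sphere shows that the outer-quarter sphere first develops a vertical tangent at a height $r_*<\sqrt{2n}$ with $f(r_*)>0$; equivalently, written as a graph over the $x$-axis it has a single local maximum and no local minimum in the first quadrant, and it then meets the $r$-axis at a height $r_O<\sqrt{2n}$. I would use this to control the descending portion after the turning point: as $r$ decreases from $r_*$, the sphere profile $g(r)=\sqrt{2n-r^2}$ \emph{increases} while the $x$-coordinate of the curve \emph{decreases} toward the $r$-axis, so on this portion the curve moves strictly deeper inside the circle $x^2+r^2=2n$ and can cross the sphere at most once. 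Combining this with the intersection bound on the ascending portion from the previous paragraph yields the at-most-once conclusion.

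The at-least-once conclusion is then immediate from continuity: the curve begins at $(f(0),0)$ strictly outside the circle $x^2+r^2=2n$ (since $f(0)>\sqrt{2n}$) and reaches $(0,r_O)$ strictly inside it (since $r_O<\sqrt{2n}$), so it must cross the sphere, and by the previous step it does so exactly once. Together these give Proposition~\ref{cp:prop2}. I expect the main obstacle to be precisely the positivity $f(r_*)>0$—the assertion that the curve turns over \emph{before} it reaches the $r$-axis rather than after. The comparison step is a routine sign-tracking adaptation of the inner case treated in Lemma~\ref{cp:lemma:1} and Lemma~\ref{cp:lemma:2}, but if the turning point occurred at or past the $r$-axis, the phrase ``before it crosses the $r$-axis'' would no longer capture the crossing and the count could fail. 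This delicate point cannot be settled by the elementary comparison functions $v$ and $w$; it requires the linearized analysis of Appendix~B, and I would treat that input as the crux and cite it, as the paper does, rather than reprove it here.
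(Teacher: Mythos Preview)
Your approach is the paper's: flip the signs in the $v=f/g$ and $w=f'/g'$ arguments of Lemmas~\ref{cp:lemma:1} and~\ref{cp:lemma:2}, and cite Appendix~B for the turning-point positivity $f(r_*)>0$, which you correctly flag as the crux. One attribution needs correcting, and it matters for non-circularity. You credit Appendix~B with the bound $r_*<\sqrt{2n}$ and hence with $r_O<\sqrt{2n}$, but Proposition~\ref{polar:prop1} only delivers $\alpha_Q>0$ and the qualitative shape (one local maximum, no local minimum in the first quadrant); in the paper the inequality $r_*<\sqrt{2n}$ is what the \emph{adapted comparison} yields (``similar arguments show that $f$ blows up at a point $r_*<\sqrt{2n}$''). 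Concretely: if $r_*\ge\sqrt{2n}$ then $w$ is defined on $[0,\sqrt{2n})$ with $w(0)>1$, $w''(0)>0$, and $w\to 0$ as $r\to\sqrt{2n}^-$ (because $g'\to-\infty$ while $f'$ stays finite), forcing an interior maximum with $w>1$ and the same sign contradiction as in Lemma~\ref{cp:lemma:2}.

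This ordering matters because your ``at least once'' step uses $r_O<\sqrt{2n}$, and in the paper that inequality is a \emph{consequence} of Proposition~\ref{cp:prop2} (it is how Proposition~\ref{prop:q_sphere_intersect} deduces $r_Q<\sqrt{2n}$); citing it as an input would be circular. The fix is immediate once you relocate the source: the $w$-argument gives $r_*<\sqrt{2n}$, and then the shape from Appendix~B (monotone increase of $u$ from $(0,r_O)$ to the maximum) yields $r_O<r_*<\sqrt{2n}$, after which your continuity step and your monotone-distance treatment of the descending arc go through exactly as written.
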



\section*{Appendix B: A Legendre type differential equation}

In this appendix, we study the behavior of the first graphical component of quarter spheres near the sphere. For simplicity, we will use the term `quarter sphere' and the notation $Q$ to refer to the first graphical component $\Lambda[0](Q)$ of the quarter sphere $Q$. Following the analysis in Appendix A of~\cite{KKM}, where the $n=2$ case is treated, we show that the linearization of the rotational self-shrinker differential equation near the sphere is a Legendre type differential equation. An analysis of this differential equation shows that outer-quarter spheres in the first quadrant intersect the $r$-axis with positive slope, and inner-quarter spheres in the first quadrant intersect the $r$-axis with negative slope.

Writing the rotational self-shrinker differential equation in polar coordinates $\rho = \rho(\phi)$, where $\rho = \sqrt{x^2+r^2}$ and $\phi = \arctan(r/x)$, we have
\begin{equation}
\label{polar_SSEq}
\rho'' = \frac{1}{\rho} \left\{ \rho'^2 + \left( \rho^2 + \rho'^2 \right) \left[ n - \frac{\rho^2}{2} - (n-1)\frac{\rho'}{\rho \tan{\phi} } \right] \right\}.
\end{equation}
In these coordinates, the sphere corresponds to the constant solution $\rho = \sqrt{2n}$. We note that this equation has a singularity when $\phi =0$ due to the $1 / \tan{\phi}$ term.

Making the substitution $\psi = 1 - \cos{\phi}$, we can write equation~(\ref{polar_SSEq}) as
\begin{eqnarray}
\label{change_polar_SSEq}
\psi \frac{d^2 \rho}{d \psi^2} & = & \frac{1}{\rho (2-\psi)} \left( \rho^2 + \psi (2 - \psi) \left( \frac{d \rho}{d \psi} \right)^2 \right) \left[ n - \frac{\rho^2}{2} - (n-1) \frac{ (1-\psi)}{\rho} \frac{d \rho}{d \psi} \right] \\
& & + \frac{\psi}{\rho} \left( \frac{d \rho}{d \psi} \right)^2 - (1- \psi) \frac{d \rho}{d \psi}, \nonumber
\end{eqnarray}
which has the form of the singular Cauchy problem studied in~\cite{BG} (where $\psi$ is the time variable). Applying Theorem 2.2 in~\cite{BG} to~(\ref{change_polar_SSEq}), shows that the solution $\rho(\phi,\epsilon)$ to~(\ref{polar_SSEq}) with $\rho(0,\epsilon) = \sqrt{2n} + \epsilon$ and $\frac{d \rho}{d \phi}(0,\epsilon)=0$ depends smoothly on $(\phi,\epsilon)$ in a neighborhood of $(0,0)$. It then follows from the smooth dependence on initial conditions (away from the singularity at $\phi = 0$) for solutions to~(\ref{polar_SSEq}) that $\rho(\phi,\epsilon)$ is smooth when $\phi \in [0,\pi/2]$ and $\epsilon$ is close to $0$.

In order to understand the behavior of $\rho(\phi,\epsilon)$ when $\epsilon$ is close to $0$, we study the linearization of the rotational self-shrinker differential equation near the sphere $\rho(\phi,0) = \sqrt{2n}$. We define $w$ by $$w(\phi) = \frac{d}{d \epsilon} \bigg|_{\epsilon =0} \rho(\phi,\epsilon).$$ Then $w$ satisfies the (singular) linear differential equation:
\begin{equation}
\label{linear_polar_SSEq}
w'' = -\frac{n-1}{\tan{\phi}} w' - 2n w,
\end{equation}
with $w(0) = 1$ and $w'(0) = 0$. We will show that $w(\pi/2)<0$ and $w'(\pi/2)<0$.

\begin{lemma}
\label{polar:lemma1}
Let $w$ be the solution to~\emph{(\ref{linear_polar_SSEq})} with $w(0) = 1$ and $w'(0) = 0$. Then $w(\pi/2)<0$ and $w'(\pi/2)<0$.
\end{lemma}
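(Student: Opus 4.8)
The plan is to recast~(\ref{linear_polar_SSEq}) in Sturm--Liouville form and to study the regular solution as the coefficient $2n$ varies over an eigenvalue parameter $\lambda$. Multiplying by $(\sin\phi)^{n-1}$, equation~(\ref{linear_polar_SSEq}) becomes
$$\frac{d}{d\phi}\Big( (\sin\phi)^{n-1} w' \Big) + 2n\,(\sin\phi)^{n-1} w = 0,$$
and $w$ is the solution regular at the pole $\phi=0$ normalized by $w(0)=1$, $w'(0)=0$. I would embed $w$ in the family $w_\lambda$ solving the same equation with $2n$ replaced by $\lambda\ge 0$ and the same initial data, so that $w=w_{2n}$. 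Two explicit members serve as landmarks: $\cos\phi$ solves the equation with $\lambda=n$, and $(n+1)\cos^2\phi-1$ solves it with $\lambda=2n+2$ (direct substitution). More generally the regular solutions are the zonal harmonics $C_\ell^{(n-1)/2}(\cos\phi)$ at $\lambda=\ell(\ell+n-1)$, and reflecting across $\phi=\pi/2$, under which the equation is invariant, identifies the values of $\lambda$ with $w_\lambda(\pi/2)=0$ as $\{\ell(\ell+n-1):\ell\text{ odd}\}=\{n,3n+6,\dots\}$ and those with $w_\lambda'(\pi/2)=0$ as $\{\ell(\ell+n-1):\ell\text{ even}\}=\{0,2n+2,\dots\}$. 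The whole argument turns on the fact that $2n$ lies strictly between the first two members of each list: $n<2n<3n+6$ and $0<2n<2n+2$.

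For the first conclusion I would track $V(\lambda):=w_\lambda(\pi/2)$. Since $V(0)=1>0$ and $V$ first vanishes at $\lambda=n$ (where $w_n=\cos\phi$), Sturm's theorem that the zeros of $w_\lambda$ move strictly leftward as $\lambda$ increases shows that for $\lambda$ just above $n$ the single zero of $w_\lambda$ on $(0,\pi/2]$ has entered the open interval, so $w_\lambda(\pi/2)$ changes sign from positive to negative; as no further sign change occurs before the next value $3n+6$, we get $V<0$ on $(n,3n+6)$, hence $w(\pi/2)=V(2n)<0$. For the second conclusion I would track $W(\lambda):=w_\lambda'(\pi/2)$. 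Here $W(0)=0$ (the constant solution), and differentiating the family in $\lambda$ at $\lambda=0$ gives $y=\partial_\lambda w_\lambda|_{\lambda=0}$ solving $\frac{d}{d\phi}\big((\sin\phi)^{n-1}y'\big)=-(\sin\phi)^{n-1}$ with $y(0)=y'(0)=0$; integrating once yields $(\sin\phi)^{n-1}y'(\phi)=-\int_0^\phi(\sin s)^{n-1}\,ds$, so $W'(0)=y'(\pi/2)<0$. Since $W$ vanishes only at the Neumann values $\{0,2n+2,\dots\}$ and none of these lies in $(0,2n+2)$, $W$ cannot change sign there; being negative just to the right of $0$, it stays negative throughout $(0,2n+2)$, so $w'(\pi/2)=W(2n)<0$.

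The main obstacle, and the step I would treat most carefully, is justifying the eigenvalue bookkeeping: that the regular solution $w_\lambda$ depends smoothly on $\lambda$ up to and including the singular endpoint $\phi=0$ (for which I would lean on the substitution $\psi=1-\cos\phi$ and Theorem 2.2 of~\cite{BG}, exactly as for $\rho(\phi,\epsilon)$), and that $V$ and $W$ vanish precisely at the odd- and even-$\ell$ members of the Gegenbauer ladder with a simple sign change. Once the ``regular at $0$'' Sturm--Liouville problem on $[0,\pi/2]$ is set up and these two eigenvalue lists are in hand, the signs of $w$ and $w'$ at $\pi/2$ are forced purely by the location $n<2n<2n+2<3n+6$ of the eigenvalue $2n$. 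A more self-contained route avoiding spectral language uses the Wronskians of $w$ with $\cos\phi$ and with $(n+1)\cos^2\phi-1$, which yield the identities $w(\pi/2)=-n\int_0^{\pi/2}(\sin\phi)^{n-1}\cos\phi\,w\,d\phi$ and $-w'(\pi/2)=2\int_0^{\pi/2}(\sin\phi)^{n-1}\big[(n+1)\cos^2\phi-1\big]w\,d\phi$; but pinning the signs of these integrals still requires knowing that $w$ has exactly one sign change on $(0,\pi/2)$ and is monotone there, which is exactly what the $\lambda$-monotonicity supplies more cleanly.
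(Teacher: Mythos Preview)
Your approach is correct but genuinely different from the paper's. The paper substitutes $\xi=\cos\phi$ to obtain the Legendre-type equation $(1-\xi^2)w_{\xi\xi}=n\xi w_\xi-2nw$, computes the first few derivatives of $w$ at $\xi=1$, differentiates the equation twice more, and then uses a maximum-principle argument for $w_{\xi\xi}$ (whose equation $(1-\xi^2)\frac{d^4w}{d\xi^4}=(n+4)\xi\frac{d^3w}{d\xi^3}+2\frac{d^2w}{d\xi^2}$ has no zeroth-order term of the wrong sign) to pin down the signs of $w_{\xi\xi}(0)$ and $w_{\xi\xi\xi}(0)$, from which $w(0)<0$ and $w_\xi(0)>0$ follow algebraically. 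Your route instead embeds the coefficient $2n$ in a Sturm--Liouville eigenvalue parameter $\lambda$ and brackets it by the explicit Dirichlet eigenfunction $\cos\phi$ at $\lambda=n$ and the Neumann eigenfunction $(n+1)\cos^2\phi-1$ at $\lambda=2n+2$; counting their interior zeros identifies these as $\lambda_0$ and $\mu_1$, so interlacing (or the Pr\"ufer angle) forces $V(\lambda)=w_\lambda(\pi/2)$ and $W(\lambda)=w_\lambda'(\pi/2)$ to keep a fixed sign on $(n,2n+2)\ni 2n$, and your computation of $W'(0)<0$ and of $V'(n)<0$ (via the Wronskian identity you wrote down) fixes those signs. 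The paper's argument is shorter and entirely self-contained, needing no spectral machinery; yours is more conceptual---it explains \emph{why} the lemma holds (the coefficient $2n$ sits strictly between the first hemispherical Dirichlet and first nonzero Neumann eigenvalues) and would adapt immediately if the coefficient were perturbed. You are right that the one point needing care is smooth dependence of $w_\lambda$ on $\lambda$ through the singular endpoint, and invoking the same desingularization and \cite{BG} as for $\rho(\phi,\epsilon)$ handles this.
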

\begin{proof}
We begin by making the substitution $\xi = \cos{\phi}$, which turns~(\ref{linear_polar_SSEq}) into the following Legendre type differential equation:
\begin{equation}
\label{legendre:eq}
(1-\xi^2) \frac{d^2w}{d \xi^2} = n \xi \frac{d w}{d \xi} - 2nw,
\end{equation}
with the initial conditions at $\xi=1$: $$w(1)=1, \qquad \frac{d w}{d \xi}(1) = 2.$$ To prove the lemma, we need to show that $w = w(\xi)$ satisfies $w(0) <0$ and  $\frac{d w}{d \xi}(0) > 0$.

Taking derivatives of~(\ref{legendre:eq}) we have the following second order differential equations:
\begin{equation}
\label{legendre:eq:diff1}
(1-\xi^2) \frac{d^3w}{d \xi^3} = (n+2) \xi \frac{d^2 w}{d \xi^2} - n\frac{d w}{d \xi},
\end{equation}
\begin{equation}
\label{legendre:eq:diff2}
(1-\xi^2) \frac{d^4w}{d \xi^4} = (n+4) \xi \frac{d^3 w}{d \xi^3} + 2 \frac{d^2 w}{d \xi^2}.
\end{equation}
It follows from~(\ref{legendre:eq}) and~(\ref{legendre:eq:diff1}) that $$\frac{d^2 w}{d \xi^2}(1) = \frac{2n}{n+2}, \qquad \frac{d^3w}{d \xi^3}(1) = -\frac{4n}{(n+2)(n+4)}.$$ We also note that the differential equation~(\ref{legendre:eq:diff2}) for $\frac{d^2 w}{d \xi^2}$ satisfies a maximum principle.

An analysis of the possible values of $\frac{d^2 w}{d \xi^2}(0)$ and $\frac{d^3 w}{d \xi^3}(0)$ shows that $\frac{d^2 w}{d \xi^2}(0) >0$ and $\frac{d^3 w}{d \xi^3}(0)<0$ are the only conditions that agree with the initial conditions at $\xi = 1$. For example, if $\frac{d^2 w}{d \xi^2}(0) < 0$ and $\frac{d^3 w}{d \xi^3}(0) > 0$, then the conditions $\frac{d^2 w}{d \xi^2}(1) >0$ and $\frac{d^3 w}{d \xi^3}(1)<0$ imply that $\frac{d^2 w}{d \xi^2}$ achieves a positive maximum on $[0,1]$ at an interior point, which contradicts the maximum principle. 

Since $\frac{d^2 w}{d \xi^2}(0) >0$ and $\frac{d^3 w}{d \xi^3}(0)<0$, it follows from~(\ref{legendre:eq}) and~(\ref{legendre:eq:diff1}) that $w(0) <0$ and  $\frac{d w}{d \psi}(0) > 0$. Regarding $w = w(\phi)$ as a function of $\phi$, this says that $w(\pi/2)<0$ and $w'(\pi/2)<0$, which proves the lemma.
\end{proof}

Now we can prove the assertion about quarter spheres made at the beginning of this appendix.
\begin{proposition}
\label{polar:prop1}
The first graphical component of an outer-quarter sphere in the first quadrant intersects the $r$-axis with positive slope, and the first graphical component of an inner-quarter sphere in the first quadrant intersects the $r$-axis with negative slope.
\end{proposition}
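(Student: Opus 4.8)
The plan is to first convert Lemma~\ref{polar:lemma1} into a statement about the Euclidean slope at the $r$-axis, and then to propagate the resulting sign globally by a continuity argument anchored by Huisken's theorem. Writing the profile in polar form $(\rho\cos\phi,\rho\sin\phi)$, its slope as a graph over the $x$-axis is $\frac{dr}{dx}=\frac{\rho'\sin\phi+\rho\cos\phi}{\rho'\cos\phi-\rho\sin\phi}$, which at the $r$-axis $\phi=\pi/2$ reduces to $-\rho'(\pi/2)/\rho(\pi/2)$. For the quarter sphere $\rho(\cdot,\epsilon)$ with $\rho(0,\epsilon)=\sqrt{2n}+\epsilon$, Taylor expansion in $\epsilon$ (using that the sphere $\rho(\cdot,0)\equiv\sqrt{2n}$ has $\rho'\equiv 0$) gives $\rho(\pi/2,\epsilon)=\sqrt{2n}+\epsilon\,w(\pi/2)+o(\epsilon)$ and $\rho'(\pi/2,\epsilon)=\epsilon\,w'(\pi/2)+o(\epsilon)$, so the slope at the $r$-axis equals $-\epsilon\,w'(\pi/2)/\sqrt{2n}+o(\epsilon)$. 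Since $w'(\pi/2)<0$ by Lemma~\ref{polar:lemma1}, this slope is positive for small $\epsilon>0$ (outer-quarter spheres) and negative for small $\epsilon<0$ (inner-quarter spheres), which settles the proposition for quarter spheres sufficiently close to the sphere.

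To reach all quarter spheres I would argue by continuity in the shooting parameter $x_0$. The angle $\alpha_Q(x_0)$ at which $\Lambda[0](Q[x_0])$ meets the $r$-axis depends continuously on $x_0$, it stays in $(-\pi/2,\pi/2)$ because the quarter sphere reaches the $r$-axis as a graph before its slope blows up (Appendix A and~\cite{D}), and $\alpha_Q(\sqrt{2n})=0$. By the previous paragraph $\alpha_Q$ has the asserted sign for $x_0$ near $\sqrt{2n}$, so it suffices to show $\alpha_Q(x_0)\neq 0$ for every $x_0\neq\sqrt{2n}$; since $(0,\sqrt{2n})$ and $(\sqrt{2n},\infty)$ are connected, the sign is then constant on each interval and agrees with the near-sphere sign. (For the inner case one can alternatively avoid the continuity step: viewed as a graph $x=f(r)$ the inner-quarter sphere is decreasing and concave down past $r=0$, so $f'<0$ at the crossing and $dr/dx=1/f'<0$ directly.)

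The crux is excluding a perpendicular crossing. Suppose $\alpha_Q(x_0)=0$ for some $x_0\neq\sqrt{2n}$, so that $\Lambda[0](Q[x_0])$ meets the $r$-axis orthogonally. Reflecting this arc across the $r$-axis, which preserves solutions of~(\ref{SSEq}), produces a smooth closed profile $\rho=\rho(\phi)$, $\phi\in[0,\pi]$, meeting the $x$-axis perpendicularly at both ends; its rotation about the $x$-axis is a smooth, compact, rotationally symmetric $S^n$ self-shrinker. Because this profile is a radial graph, a direct computation gives the support function $\langle (x,r),\nu\rangle=\rho^2/\sqrt{\rho^2+(\rho')^2}>0$, whence $H=\tfrac12\langle F,N\rangle>0$ and the surface is mean-convex. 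By Huisken's theorem the only such surface is the round sphere of radius $\sqrt{2n}$, forcing $x_0=\sqrt{2n}$, a contradiction. I expect the main obstacle to be precisely this step: one must check that the reflected surface is genuinely smooth and closed, and above all that the profile remains a radial graph for the relevant $x_0$ (so that the support-function identity, and hence mean-convexity, applies). The smooth-dependence result established for~(\ref{change_polar_SSEq}), together with the comparison results of Appendix A showing that each quarter sphere crosses the sphere exactly once and is monotone in the appropriate variable, should supply the needed radial-graph property.
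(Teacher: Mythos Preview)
Your overall strategy---establish the sign for $x_0$ near $\sqrt{2n}$ via Lemma~\ref{polar:lemma1}, then propagate by continuity in $x_0$ and exclude $\alpha_Q(x_0)=0$ via Huisken's theorem---is exactly the paper's. Your parenthetical for the inner case (using $f'<0$ for the graph $x=f(r)$, so $dr/dx=1/f'<0$ at the $r$-axis crossing) is a valid shortcut; the paper instead notes that the first failing inner $x_0$ would yield a convex closed profile and invokes Huisken.

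The gap you flag for the outer case is real, and your proposed fix via Appendix~A does not close it. Observe first that the radial-graph condition is \emph{equivalent} to mean-convexity here: the profile $(x,u(x))$ is tangent to a ray through the origin exactly when $\Psi(x):=xu'(x)-u(x)=0$, and the support function is $\langle F,\nu\rangle=-\Psi/\sqrt{1+(u')^2}$; so ``radial graph together with $\Psi(0)<0$'' and ``$\Psi<0$ everywhere'' are the same statement. Hence you cannot verify mean-convexity by first assuming radial-graphicality---you must prove $\Psi<0$ directly. Appendix~A does not supply this: Proposition~\ref{cp:prop2} for outer-quarter spheres is explicitly stated there to rely on Appendix~B (so the citation is circular), and ``crosses the sphere exactly once'' concerns $\rho$ versus $\sqrt{2n}$, not monotonicity of $\phi$. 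The paper closes the gap with a short maximum-principle argument on $\Psi$ itself: one has $\Psi(0)=-u(0)<0$ and $\Psi\to-\infty$ as $x\to x_0^-$, while at any interior critical point $\Psi'(x_1)=x_1u''(x_1)=0$ forces, via~\eqref{x_graph_SSEq}, $\tfrac12\Psi(x_1)+(n-1)/u(x_1)=0$, hence $\Psi(x_1)<0$. Thus $\Psi<0$ on $[0,x_0)$, the reflected surface is mean-convex, and Huisken gives the contradiction.
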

\begin{proof}
It follows from Lemma~\ref{polar:lemma1} that the proposition is true for the quarter sphere $Q[x_0]$ when $x_0$ is close to $\sqrt{2n}$. In fact, when $x_0$ is close to $\sqrt{2n}$, we know that $Q[x_0]$ is $C^2$ close to the sphere in the first quadrant, and applying Lemma~\ref{polar:lemma1} we have the following description of the shape of $Q[x_0]$ in the first quadrant: If $Q[x_0]$ is an inner-quarter sphere, then it is strictly convex and monotone in the first quadrant, and if $Q[x_0]$ is an outer-quarter sphere, then it is strictly convex with a local maximum in the first quadrant.

To prove the proposition in general, we first consider the case of inner-quarter spheres. Suppose to the contrary that some inner-quarter sphere intersects the $r$-axis with non-negative slope, and let $x_0$ be the first $x_0 < \sqrt{2n}$ with this property. It follows from the previous description of the shape of quarter spheres near the sphere that $Q[x_0]$ is convex in the first quadrant and intersects the $r$-axis perpendicularly. Such a quarter sphere corresponds to a (closed) convex self-shrinker that is not the sphere, which contradicts Huisken's theorem for mean-convex self-shrinkers.

Next, we consider the case of outer-quarter spheres. As in the previous case, suppose to the contrary that some outer-quarter sphere intersects the $r$-axis with non-positive slope, and let $x_0$ be the first $x_0 > \sqrt{2n}$ with this property. It again follows from the previous description of the shape of quarter spheres near the sphere that $Q[x_0]$ intersects the $r$-axis perpendicularly; however, $Q[x_0]$ may not be convex in the first quadrant. We claim that the self-shrinker corresponding to $Q[x_0]$ is mean-convex. Writing $Q[x_0]$ as the graph $(x,u(x))$, where $u$ is a solution to~(\ref{x_graph_SSEq}), it is sufficient to show that $\Psi(x) =xu'-u$ does not vanish for $0 \leq x < x_0$. Since $\Psi(0)<0$ and $\Psi(x_0) = - \infty$, we need to show that $\Psi < 0$. If $\Psi$ has a non-negative maximum at some point $x_1 \in (0,x_0)$, then $$0 = \Psi'(x_1) = x_1 u''(x_1) = x_1 \left( 1 + u'(x_1)^2 \right)\left[ \frac{\Psi(x_1)}{2}+ \frac{n-1}{u(x_1)} \right] > 0,$$ which is a contradiction. Therefore, $\Psi <0$ and self-shrinker corresponding to $Q[x_0]$ is mean-convex, which contradicts Huisken's theorem for mean-convex self-shrinkers.

We conclude that the first graphical component of an outer-quarter sphere $Q[x_0]$ with $x_0 > \sqrt{2n}$ intersects the $r$-axis with positive slope, and the first graphical component of an inner-quarter sphere $Q[x_0]$ with $0<x_0<\sqrt{2n}$ intersects the $r$-axis with negative slope.
\end{proof}


\section*{Appendix C: Pictures of geodesics}

Here are some pictures of geodesics that correspond to immersed self-shrinkers.

\begin{figure}[p]
\label{fig:sphere}
\begin{center}
\includegraphics[width=.85 \textwidth]{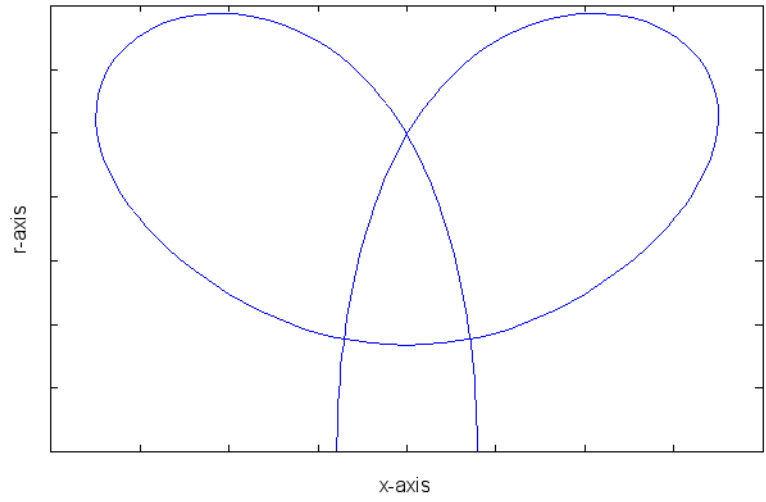}
\end{center}
\caption{A geodesic whose rotation about the $x$-axis is an immersed sphere self-shrinker.}
\end{figure}

\begin{figure}[p]
\label{fig:plane}
\begin{center}
\includegraphics[width=.85 \textwidth]{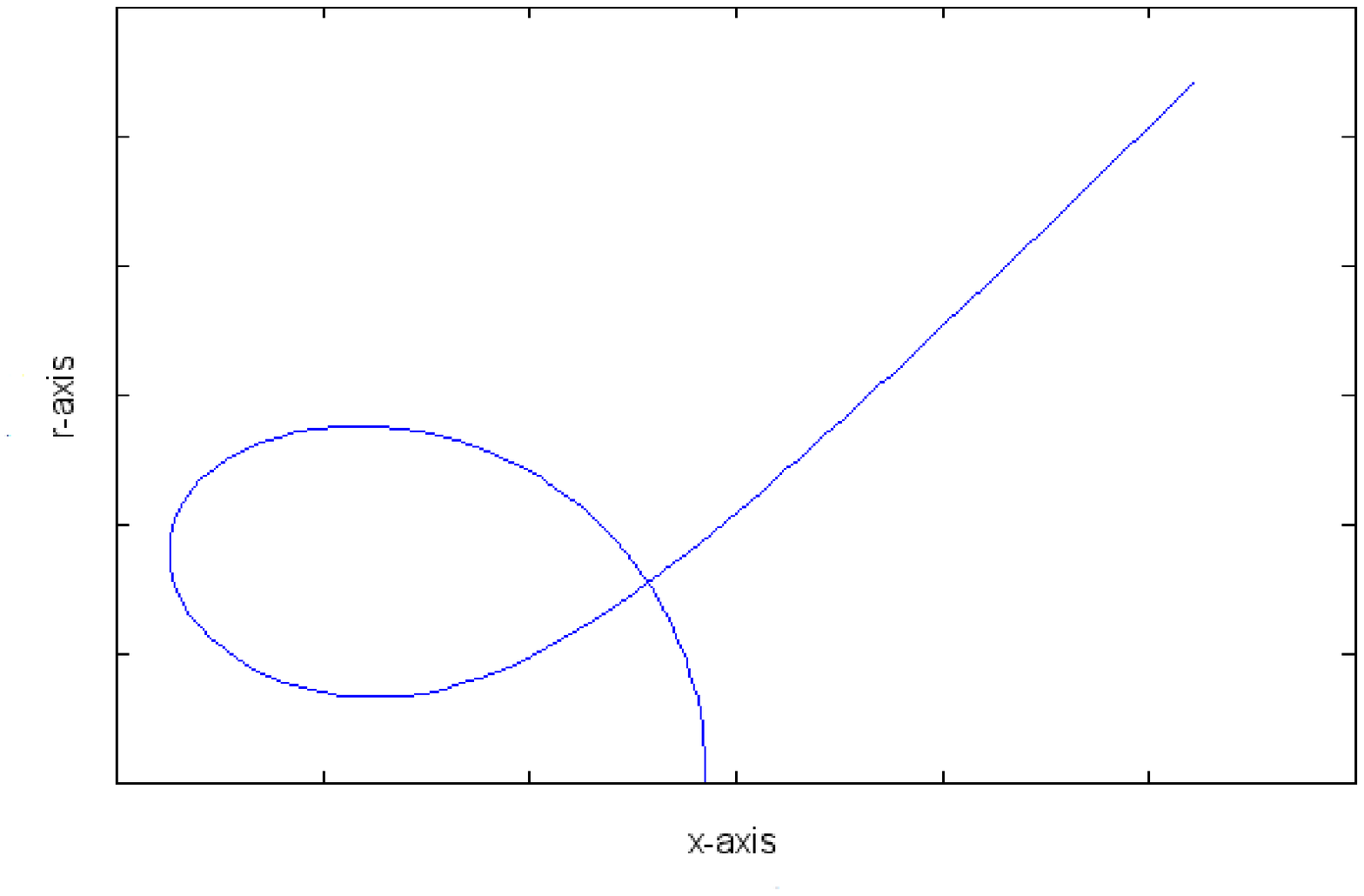}
\end{center}
\caption{A geodesic whose rotation about the $x$-axis is an immersed plane self-shrinker.}
\end{figure}

\begin{figure}[p]
\label{fig:cylinder}
\begin{center}
\includegraphics[width=.85 \textwidth]{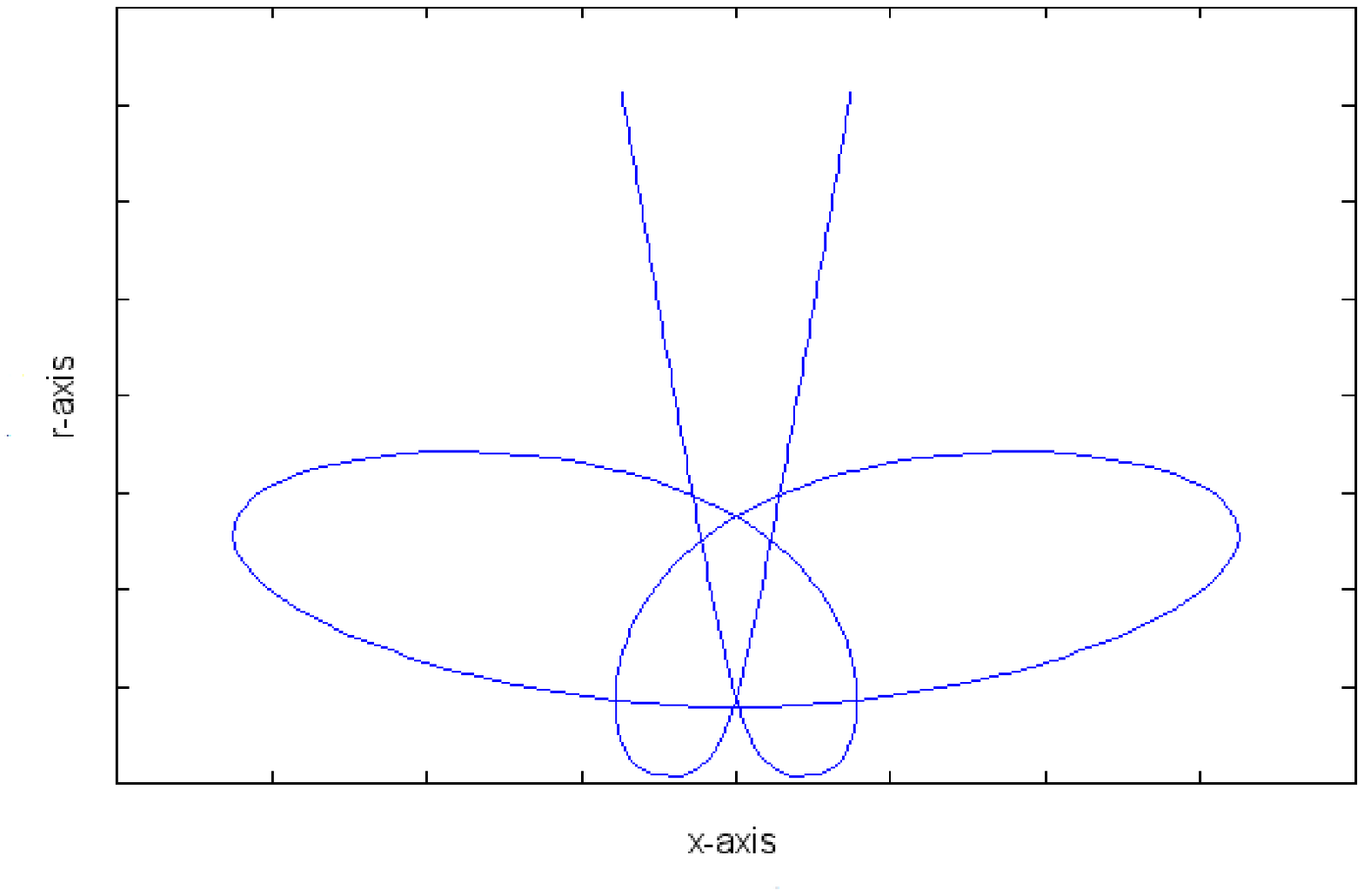}
\end{center}
\caption{A geodesic whose rotation about the $x$-axis is an immersed cylinder self-shrinker.}
\end{figure}

\begin{figure}[p]
\label{fig:torus}
\begin{center}
\includegraphics[width=.85 \textwidth]{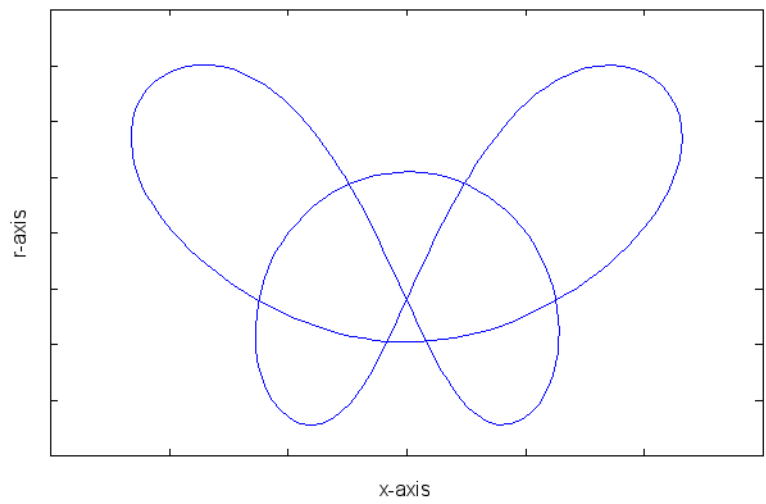}
\end{center}
\caption{A geodesic whose rotation about the $x$-axis is an immersed torus self-shrinker.}
\end{figure}

\clearpage


\bibliographystyle{amsplain}

\end{document}